\documentclass[10pt,leqno]{amsart}
\usepackage[letterpaper,margin=25mm,headheight=12pt,headsep=16pt,footskip=24pt]{geometry}
\usepackage{amsmath,amssymb,mathtools,mathrsfs,booktabs,array}
\usepackage{microtype}
\usepackage[colorlinks=true,linkcolor=blue,citecolor=blue,urlcolor=blue]{hyperref}
\newtheorem{theorem}{Theorem}[section]
\newtheorem{proposition}[theorem]{Proposition}
\newtheorem{lemma}[theorem]{Lemma}
\newtheorem{corollary}[theorem]{Corollary}
\newtheorem{conjecture}[theorem]{Conjecture}
\theoremstyle{definition}
\newtheorem{definition}[theorem]{Definition}
\newtheorem{remark}[theorem]{Remark}
\newcommand{\C}{\mathbb C}
\newcommand{\PP}{\mathbb P}
\newcommand{\Z}{\mathbb Z}
\newcommand{\OO}{\mathcal O}
\renewcommand{\AA}{\mathcal A}
\newcommand{\Db}{\mathrm D^{\mathrm b}}
\newcommand{\HS}{\operatorname{HS}}
\newcommand{\HH}{\operatorname{HH}}
\newcommand{\Hom}{\operatorname{Hom}}
\newcommand{\RHom}{\operatorname{RHom}}
\newcommand{\Ext}{\operatorname{Ext}}
\newcommand{\End}{\operatorname{End}}
\newcommand{\cEnd}{\mathcal E nd}
\newcommand{\Ker}{\operatorname{Ker}}
\newcommand{\Image}{\operatorname{Image}}
\newcommand{\Jac}{\operatorname{Jac}}
\newcommand{\Sym}{\operatorname{Sym}}
\newcommand{\Pic}{\operatorname{Pic}}
\newcommand{\Perf}{\operatorname{Perf}}
\newcommand{\coh}{\operatorname{coh}}
\newcommand{\id}{\operatorname{id}}
\newcommand{\rk}{\operatorname{rk}}
\newcommand{\Cl}{\operatorname{Cl}}
\newcommand{\Fun}{\operatorname{Fun}}
\newcommand{\Bl}{\operatorname{Bl}}
\newcommand{\PGL}{\operatorname{PGL}}
\newcommand{\Spec}{\operatorname{Spec}}
\newcommand{\Kf}{\mathcal K_f}

\newcommand{\rootstack}[2]{\sqrt[2]{(#1,#2)}}
\newcommand{\alg}{_{\mathrm{alg}}}
\newcommand{\LL}{\mathbf L}
\newcommand{\RR}{\mathbf R}
\newcommand{\Ktop}{\mathrm K^{\mathrm{top}}}
\newcommand{\modu}{\mathrm{mod}}
\DeclareMathOperator{\ch}{ch}
\DeclareMathOperator{\coker}{coker}
\DeclareMathOperator{\Rep}{Rep}

\title[Categorical reconstruction of del Pezzo surfaces]{Categorical reconstruction of del Pezzo surfaces: Hochschild--Serre algebras and spinor modifications}
\author{Xun Lin}
\address{School of Science and Engineering, The Chinese University of Hong Kong, Shenzhen, Shenzhen, Guangdong 518172, China}
\email{lin-x18@tsinghua.org.cn}
\author{Marco Rampazzo}
\address{Yau Mathematical Sciences Center, Tsinghua University, Shuangqing Complex Bldg., Haidian district, Beijing, China}
\email{marcorampazzo@mail.tsinghua.edu.cn, marco.rampazzo.90@icloud.com}
\author{Shizhuo Zhang}
\address{School of Mathematics, Sun Yat-sen University, Guangzhou 510275, P. R. China}
\email{zhangshzh28@mail.sysu.edu.cn}
\subjclass[2020]{Primary 14F08; secondary 14J26, 14J45, 16G20}
\keywords{Del Pezzo surfaces, categorical Torelli theorems, Hochschild--Serre algebras, spinor modifications, hyperbolic equivalence, Clifford algebras, weighted projective lines, canonical algebras}
\hypersetup{pdftitle={Categorical reconstruction of del Pezzo surfaces: Hochschild--Serre algebras and spinor modifications},pdfauthor={Xun Lin, Marco Rampazzo, and Shizhuo Zhang}}
\date{}
\begin{document}
\begin{abstract}
We prove that, for every smooth complex del Pezzo surface of degree at most four, the enhanced right orthogonal to the structure sheaf determines the surface up to isomorphism. In degrees one, two, and three, we recover the anticanonical equation from intrinsic pieces of the Hochschild–Serre algebra via graded matrix factorizations; in degree four, the relevant Serre diagonal recovers the orbifold canonical ring associated with the pencil of quadrics. We also give an alternative proof in degrees one and two using the Bertini and Geiser involutions, equivariant topological K-theory, and classical Torelli.
Then, we study the Clifford component associated with a conic bundle structure over the projective line. In every degree at most four, we construct an abstract spinor bundle whose spinor modification produces another, generally non-isomorphic, del Pezzo surface. We show via Kuznetsov’s modification theorem that the relevant base-linear equivalences are precisely those induced by spinor modifications.
\end{abstract}
\maketitle

\section{Introduction}
\subsection{Reconstruction after removing the structure sheaf}
Derived reconstruction is usually formulated for the whole category $\Db(X)$. For a smooth projective variety whose canonical or anticanonical class is ample, the theorem of Bondal and Orlov reconstructs the variety from this category \cite{BO01}. The purpose of the present paper is to ask how much of that conclusion survives after one discards a single, geometrically distinguished object.

The question has a canonical formulation for a smooth complex Fano variety $X$. Kodaira vanishing gives $H^i(X,\OO_X)=0$ for $i>0$, while $H^0(X,\OO_X)=\C$; hence $\OO_X$ is exceptional. We may therefore form the admissible subcategory
\begin{equation}\label{eq:CX}
\mathcal C_X:=\langle\OO_X\rangle^\perp,\qquad
\Db(X)=\langle\mathcal C_X,\OO_X\rangle.
\end{equation}
Unlike a Kuznetsov component defined by deleting a longer exceptional collection, $\mathcal C_X$ involves no auxiliary choice of such a collection. At the same time, it no longer records the position of $\OO_X$ or the gluing that places the two terms of \eqref{eq:CX} inside $\Db(X)$. Thus an equivalence of the orthogonal categories does not formally extend to an equivalence of the ambient derived categories. The conjecture attributed to Alexander Perry predicts that the missing exceptional object nevertheless carries no indispensable isomorphism-class information.
\begin{conjecture}\label{conj:perry}
For smooth complex Fano varieties $X$ and $X'$, a $\C$-linear enhanced equivalence $\mathcal C_X\simeq\mathcal C_{X'}$ implies $X\cong X'$.
\end{conjecture}
The word ``enhanced'' is essential here. Indeed, the enhancement hypothesis ensures that an equivalence preserves the Hochschild--Serre algebra used in our reconstruction: its definition involves derived natural transformations and their composition, which are transported by dg Morita equivalences \cite{Toe07}. Such compatibility is not automatic for triangulated equivalences in general. In fact, a triangulated category may admit non-equivalent dg enhancements \cite{RVdB19}. Therefore, we work with the standard smooth proper dg enhancements and $\C$-linear Morita equivalences between them. The geometric functors are represented by Fourier--Mukai kernels. Consequently, the Serre functor and the derived spaces of natural transformations between its powers are transported by an equivalence.

The restriction to degree at most four reflects the geometry of the moduli problem. By the classical classification \cite[Section 3.4]{DI09}, smooth complex del Pezzo surfaces of degrees $5,6,7$, and $9$ are unique up to isomorphism in each degree, while degree $8$ has just the two classes $\PP^1\times\PP^1$ and $\Bl_p\PP^2$. For $1\leq d\leq4$, there are positive-dimensional families of pairwise non-isomorphic surfaces. Thus, in this range, categorical reconstruction must recover the moduli of the surface beyond its numerical degree.

Our main result verifies the conjecture within the class of del Pezzo surfaces of degree at most four. We abbreviate the orthogonal category by
\begin{equation}\label{eq:AS}
\AA_S:=\mathcal C_S=\langle\OO_S\rangle^\perp.
\end{equation}
In degree four, the reconstruction theorem was already proved by Elagin \cite[Theorem 3.1]{Ela26}. Our contribution in this case is a proof through the Hochschild--Serre algebra: an intrinsic graded subalgebra recovers the orbifold canonical ring of the associated weighted projective line, and hence the five-point discriminant of the pencil of quadrics.

The category $\AA_S$ is canonical and, as we prove, determines $S$. After choosing a conic-bundle morphism on $S$, one obtains a smaller Clifford component. That component is unchanged by spinor modification and can therefore be shared by non-isomorphic del Pezzo surfaces. Comparing these two categories, and the distinguished spinor that relates them, organizes the results of the paper.

\subsection{The Hochschild--Serre algebra and intrinsic symmetries}
For a smooth proper dg category $\AA$ with Serre functor $S_{\AA}$, set
\begin{equation}\label{eq:HSintro}
\HS^{p,q}(\AA):=\Hom_{\Fun(\AA,\AA)}(\id_{\AA},S_{\AA}^q[p]),
\qquad \HS(\AA):=\bigoplus_{p,q\in\Z}\HS^{p,q}(\AA).
\end{equation}
Here the Hom space is taken in the homotopy category of derived dg quasi-functors. Composition twisted by powers of the Serre functor makes this a bigraded algebra; its Morita invariance is established in \cite[Definition 3.3 and Theorem 3.4]{LZ24}. The relevant pieces of $\HS(\AA_S)$ recover a Jacobian algebra in degrees one, two, and three, and an orbifold canonical ring in degree four.

The Serre functor provides a second intrinsic mechanism in the two double-cover cases. In degree two, let $\gamma$ be the Geiser involution of the anticanonical double plane; in degree one, let $\beta$ be the Bertini involution of the bi-anticanonical double cover. The rotation calculation in Proposition~\ref{prop:involution} gives
\begin{equation}\label{eq:involintro}
S_{\AA_S}^2\cong\gamma^*[3]\quad(d=2),\qquad
S_{\AA_S}^3\cong\beta^*[5]\quad(d=1).
\end{equation}
Thus the deck transformations are recovered intrinsically from $\AA_S$. We verify the coherence needed to pass an equivalence to equivariant categories. The cyclic-cover decomposition then isolates the odd topological K-theory of the branch curve; its Hodge structure and Euler pairing give the principally polarized Jacobian. Classical Torelli recovers the curve. Its canonical model extends the curve isomorphism to the plane in degree two and to the quadric-cone model of the weighted plane in degree one. Rescaling the covering coordinate lifts the ambient isomorphism to the double covers. The complete argument is given in Theorem~\ref{thm:equiv}.

\subsection{Main results}
Our first theorem is the categorical Torelli statement. It is proved in Section~\ref{subsec:proofTorelli}; Theorem~\ref{thm:equiv} gives a second proof in degrees one and two.
\begin{theorem}\label{thm:main}
Let $S$ and $S'$ be smooth complex del Pezzo surfaces with $K_S^2,K_{S'}^2\leq4$. If
\[
\Phi:\AA_S\xrightarrow{\sim}\AA_{S'}
\]
is a $\C$-linear equivalence of enhanced categories, then $K_S^2=K_{S'}^2$ and $S\cong S'$.
\end{theorem}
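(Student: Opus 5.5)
We will prove Theorem~\ref{thm:main} by extracting from $\AA_S$ a chain of Morita invariants: first the numerical invariants that detect the degree, then an intrinsic piece of $\HS(\AA_S)$ that rebuilds the anticanonical model of $S$. By \cite[Theorem 3.4]{LZ24}, the enhanced equivalence $\Phi$ induces a bigraded algebra isomorphism $\HS(\AA_S)\cong\HS(\AA_{S'})$. It also induces an isomorphism of Hochschild homology and of numerical Grothendieck groups that respects the Euler form.

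\emph{Step 1: recovering the degree.} Since $\Db(S)=\langle\AA_S,\OO_S\rangle$ and $\chi_{\mathrm{top}}(S)=12-d$, additivity gives $\dim\HH_\bullet(\AA_S)=\rk K_0(\AA_S)=11-d$. Hence $d=K_S^2=K_{S'}^2$. (Equivalently, one can use the rank of the numerical lattice $\langle\OO_S\rangle^\perp$, which is $10-d$.)

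\emph{Step 2: the Serre functor on $\AA_S$.} The Serre functor of $\AA_S$ is $S_{\AA_S}=\LL_{\OO_S}\circ(-\otimes\omega_S)[2]$. We compute $\HS^{p,q}(\AA_S)$ by expressing the natural transformations $\id\to S_{\AA_S}^q[p]$ through kernels on $S\times S$. Mutations through $\OO_S$ contribute correction terms, which we control with $H^i(\omega_S^{-k})$ and with Kodaira vanishing.

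The aim is to identify a diagonal subalgebra $\bigoplus_q\HS^{2q,q}$, or a suitably shifted diagonal depending on $d$, with a graded algebra $A$ that determines the anticanonical model. For $d=3$ we expect $A$ to be the Jacobian-type algebra $\C[x_0,\dots,x_3]/(\partial W)$ of the cubic $W$, in the spirit of graded matrix factorizations and Orlov's theorem. For $d=2$ and $d=1$ we expect the same in weighted variables: a quartic in $\PP^2$, and a sextic in $\PP(1,1,2)$ respectively, after using the double-cover structure. For $d=4$ the relevant Serre diagonal should give the orbifold canonical ring of the weighted projective line attached to the pencil of quadrics. To justify these identifications we will use the periodicity \eqref{eq:involintro} together with Orlov's description of the category of graded matrix factorizations.

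\emph{Step 3: reconstructing the surface.} For $d\le3$, the graded isomorphism of Jacobian algebras reconstructs the hypersurface up to a projective linear change of coordinates; for smooth hypersurfaces this is a Mather--Yau type result, in its graded form due to Donagi. This recovers the cubic surface. It also recovers the quartic or sextic branch curve, and hence the double cover $S$. For $d=4$, the orbifold canonical ring determines the five weighted points on $\PP^1$ up to $\PP\mathrm{GL}_2$. Those five points form the discriminant of the pencil, which determines the pencil of quadrics and hence $S$.

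The main obstacle is Step 2. One must show that the relevant $\HS$ pieces are computed by the Jacobian ring and not merely have matching dimensions, and that the multiplication is the expected one despite the mutation corrections. We would first do the computation in the matrix-factorization model, where the Serre functor is a grading shift and $\HH$ of graded matrix factorizations is known. We would then match that model to $\AA_S$ (or to a category differing from it by an explicit exceptional object), keeping track of the graded pieces.
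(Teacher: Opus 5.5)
Your architecture is the paper's. The degree comes from $d=11-\dim\HH_0(\AA_S)$. For $d\le3$ you pass through Orlov's equivalence $\AA_S\simeq\operatorname{MF}^{\mathrm{gr}}(W_D)$, which is exact with no leftover exceptional object since $\sum_i a_i=D+1$, together with $S_{\AA_S}\cong\tau^{-1}[2]$; for $d=4$ you pass through the orbifold canonical ring of $\rootstack{\PP^1}{D}$. But Step~2 carries the whole content of the theorem and is left as an expectation, and as you state it, it would fail.

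\emph{The bidegree is wrong.} Since $S_{\AA_S}^q[p]\cong\tau^{-q}[2q+p]$, the transformations $\id\to\tau^e$ live in $\HS^{2e,-e}$. Your $\HS^{2q,q}$ has identity sector $\Jac(W_D)_{(2D-1)q}$, which vanishes for $q\ge1$ because the socle degree is $2D-2$. For $d=4$ the relevant diagonal is $\HS^{-p,p}$.

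\emph{The missing idea.} Even the correct diagonal $\bigoplus_e\HS^{2e,-e}$ is not the Jacobian ring. The Ballard--Favero--Katzarkov kernel formula has twisted sectors. For a cubic surface, $\HS^{4,-2}(\AA_S)\cong\Jac(W)_2\oplus\C^2$; likewise $\HS^{6,-3}$ for $d=2$ and $\HS^{10,-5}$ for $d=1$ each carry an extra $\C^2$. An equivalence need not respect the sector decomposition, so ``the diagonal subalgebra'' is not the invariant you want. The paper proceeds as follows:
\begin{itemize}
\item It uses only bidegrees in which the identity sector alone occurs.
\item It shows, via the Koszul model of the diagonal, that identity-sector classes are multiplication maps by polynomials. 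Their products are therefore ordinary Jacobian products and stay in the identity sector.
\item Hence the subalgebra generated by those bidegrees is intrinsic and isomorphic to $\Jac(W_D)$.
\end{itemize}
For $d=2,3$ the generator is $\HS^{2,-1}$. For $d=1$ this is not enough: the weight-two variable $y$ is not a product of degree-one classes, so you must also adjoin $\HS^{4,-2}=\Jac(W)_2$. The final step for $d=1$ then needs a weighted graded Milnor-algebra theorem, allowing coordinate changes $y\mapsto cy+q_2(x_0,x_1)$, rather than the standard-graded Donagi/Mather--Yau statement.

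\emph{Degree four.} Here you must supply three ingredients:
\begin{itemize}
\item the equivalence $\AA_S\simeq\Db(\coh\mathcal X_D)$, via Kuznetsov's quadric-fibration decomposition and the order--stack correspondence;
\item the Serre functor $-\otimes\omega_{\mathcal X_D}[1]$;
\item an identification of degree-zero transformations $\id\to-\otimes L$ with $H^0(\mathcal X_D,L)$, compatible with composition.
\end{itemize}
The last point needs a local check at the stacky points that the two graph components of the pulled-back diagonal admit no cross morphisms. You then have to compute the ring as $\C[u,v,w]/(w^2-\ell_1\cdots\ell_5)$ with $\deg u=\deg v=2$ and $\deg w=5$. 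The divisor $D$ is read off as the zero set of $w^2\in\Sym^5$ of the degree-two piece. ``The ring determines the five points'' is the conclusion, not an argument.

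Two smaller points. The involution relation you invoke in Step~2 plays no role in these computations. In Step~1, the orthogonal of $[\OO_S]$ in the numerical lattice has rank $11-d$, not $10-d$.
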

Additivity of Hochschild homology recovers the degree from
\[d=11-\dim\HH_0(\AA_S).\]
Once $d=K_S^2$ is fixed, the reconstruction algebra is summarized by the following table. Generated subalgebras are understood to contain the unit.
\begin{center}\small
\begin{tabular}{@{}cp{.33\textwidth}p{.51\textwidth}@{}}
\toprule
$d$ & intrinsic algebra in $\HS(\AA_S)$ & reconstructed datum\\
\midrule
1 & $\langle\HS^{2,-1},\HS^{4,-2}\rangle\alg$ & weighted Jacobian algebra of the sextic in $\PP(1,1,2,3)$\\[3pt]
2 & $\langle\HS^{2,-1}\rangle\alg$ & Jacobian algebra of the branch quartic\\[3pt]
3 & $\langle\HS^{2,-1}\rangle\alg$ & Jacobian algebra of the cubic equation\\[3pt]
4 & $\bigoplus_{p\geq0}\HS^{-p,p}$ & orbifold canonical ring and the five-point discriminant\\
\bottomrule
\end{tabular}
\end{center}
The second theorem concerns the smaller category. Choose a conic bundle $f:S\to B=\PP^1$. The quadric-fibration decomposition and Beilinson's collection on $B$ give
\begin{equation}\label{eq:sodintro}
\Db(S)=\langle\Kf,E_f,\OO_S\rangle,\quad
\AA_S=\langle\Kf,E_f\rangle,\quad E_f=f^*\OO_B(-1),
\end{equation}
where $\Kf=\Ker(Rf_*)$ is the Clifford component.
The construction and proof are given in Theorem~\ref{thm:partners}.
\begin{theorem}\label{thm:spinorintro}
Fix $1\leq d\leq4$, put $r=8-d$, and let $D=t_1+\cdots+t_r\subset\PP^1$ be reduced. There is a nonempty open set of direction data $u=(u_1,\ldots,u_r)\in(\PP^1)^r$ such that
\[
S_u=\Bl_{(t_1,u_1),\ldots,(t_r,u_r)}(\PP^1\times\PP^1)
\]
is a del Pezzo surface of degree $d$, and the first projection is a conic bundle $f_u:S_u\to\PP^1$ with discriminant $D$. For any two such data $u,v$, there is an explicit rank-two abstract spinor bundle $\mathcal F_v$ on $S_u$ such that
\[
(S_u/\PP^1)_{\mathcal F_v}\cong S_v/\PP^1,\qquad
\mathcal K_{f_u}\simeq\mathcal K_{f_v}.
\]
The equivalence is $\PP^1$-linear and t-exact. For general $u,v$, the surfaces are non-isomorphic. Modulo automorphisms of the second factor, the fixed-discriminant family has dimension $r-3=5-d$.
\end{theorem}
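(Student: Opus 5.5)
The plan is to build everything on $P=\PP^1\times\PP^1$ with first projection $p:P\to B=\PP^1$. The points $x_i=(t_i,u_i)$ lie on distinct fibres $F_i=p^{-1}(t_i)$, because $D$ is reduced. Blowing them up, $f_u:S_u\to B$ becomes a conic bundle. Its singular fibres lie exactly over the $t_i$, where $F_i$ splits as $\tilde F_i\cup E_i$, so the discriminant is $D$.

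\textbf{Openness.} The del Pezzo condition for $r=8-d\le 7$ points is the classical general-position condition, transported through $\Bl_{x}(\PP^1\times\PP^1)\cong\Bl_{2}\PP^2$. It asks that no two points lie on a ruling line (automatic for the first ruling), no curve of bidegree $(1,1)$ pass through too many of them, and so on. Each requirement is a proper Zariski-closed condition on $u\in(\PP^1)^r$. To see the open set is nonempty, I will show that general $u$ with the $t_i$ fixed still gives general points. The only conditions that could fail identically are those on fibres of $p$, and these are excluded by the distinctness of the $t_i$.

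\textbf{Spinor bundle and equivalence.} Write $\sigma_u:S_u\to P$ for the blow-up. I define
\[
\mathcal F_v=\sigma_u^*\OO(0,1)\otimes\OO\Big(-\sum_{i\in I}E_i\Big)\oplus(\text{twist}),
\]
or, more canonically, as the elementary transformation of $\sigma_u^*(\OO\oplus\OO(0,1))$ along the points $v_i$ of the fibres. The idea is that this reflects the choice of the other component of each singular fibre. In concrete terms, elementary transformations of the trivial $\PP^1$-bundle $P\to B$ centred at $x_i$ and at $(t_i,v_i)$ relate $S_u$ and $S_v$ as conic bundles over $B$ with the same discriminant. Both surfaces are obtained from a common $\PP^1$-bundle by blowing up points on the same fibres.

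I will verify the axioms of an abstract spinor bundle in the sense used with Kuznetsov's modification theorem:
\begin{itemize}
\item $\mathcal F_v$ has rank two and lies in $\Kf$ up to the required twist, meaning $Rf_*$ vanishes after the appropriate twist;
\item its restriction to each fibre is a spinor, namely $\OO(-1)$-type data on the smooth conics and the correct component line bundle on the singular ones;
\item $\cEnd$ conditions hold.
\end{itemize}
Kuznetsov's modification theorem then produces $(S_u/B)_{\mathcal F_v}$ together with a $B$-linear t-exact equivalence of Clifford components. Here $\Kf\simeq\Db(B,\mathcal B_0)$ for the even Clifford algebra. Identifying the modification with $S_v$ is done by matching the even Clifford algebras, or equivalently the associated rank-three quadratic forms, which differ by a twist.

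\textbf{Non-isomorphism and dimension count.} After fixing $D$, the group $\PGL_2$ acting on the second factor acts on $u$. The fibres $p^{-1}(t)$ are identified with one another by the product structure. The quotient of $(\PP^1)^r$ by this action has dimension $r-3$. An isomorphism $S_u\cong S_v$ gives an isomorphism of configurations in the moduli of del Pezzo surfaces. For $d\le4$ the moduli space has dimension $10-2d$, which is larger than the fixed-discriminant family, and the automorphism groups are finite. So generic fibres of the map from the family to moduli are finite, and general $u,v$ give non-isomorphic surfaces.

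\textbf{Main obstacle.} I expect the hardest step to be checking that $\mathcal F_v$ satisfies the spinor-bundle axioms on the singular fibres, and that the modified conic bundle is exactly $S_v$ rather than merely some surface with discriminant $D$. I would first try a direct local computation near each $t_i$ using the elementary transformation.
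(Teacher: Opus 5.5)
Your outline is the paper's: construct a spinor on $S_u$, apply Kuznetsov's modification theorem, and identify the partner through its even Clifford algebra. However, the spinor itself is never constructed, and the concrete formula you give cannot work.

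\textbf{The spinor bundle.} A direct sum of line bundles has $Rf_*=0$ only if each summand is, up to $f^*\Pic(B)$, of the form $\OO(-H+\sum_{i\in I}E_i)$ with $H=\sigma_u^*\OO(0,1)$. Your $\OO(H-\sum_{I}E_i)$ even has degree $+1$ on smooth fibres. Take a split spinor $\OO(-H+\sum_{I}E_i)\oplus\OO(-H+\sum_{J}E_j)$ with $I\sqcup J=\{1,\dots,r\}$. Its relative endomorphism order involves only two directions in $W$. So its modification is a blow-up with at least two centres on one horizontal line, and for $r\geq4$ it is never del Pezzo. Nor is ``elementary transformation along the points $v_i$'' a meaningful operation on a surface.

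The missing idea is that the partner's directions enter as lines $\C\widetilde v_i\subset W$, glued in along the exceptional curves of $S_u$. Extend $\OO_{S_u}\otimes W$ by $\bigoplus_i\OO_{E_i}(-1)$ using $\widetilde v_i\in W\cong\Ext^1(\OO_{E_i}(-1),\OO_{S_u}\otimes W)$, then twist by $\OO(-H)$.

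\textbf{Identifying the partner.} You then need exactly the computation you defer as the ``main obstacle''. Locally $\cEnd\cong\bigl(\begin{smallmatrix}\OO&\OO(E_i)\\ \OO(-E_i)&\OO\end{smallmatrix}\bigr)$. Hence $f_{u*}\cEnd(\mathcal F_v)$ is the flag order $\mathcal R_v\subset\End(W)\otimes\OO_B$ of endomorphisms preserving $\C\widetilde v_i$ at $t_i$, and this is independent of $u$.

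Next, the same construction on $S_v$, with its own directions, is its normalized canonical spinor. The reason is that the Euler surjection $\OO(-H)\otimes W\to\OO$ has kernel $\C\widetilde v_i$ at the centre $(t_i,v_i)$. So it extends surjectively across the positive modification in that direction, with kernel $\omega_{S_v/B}$. This gives $\Cl_0(S_v/B)\cong\mathcal R_v$ as algebras, which is what identifies the modification with $S_v$. A Morita-level match would not suffice, since every member of the family already has Morita-equivalent even Clifford algebra.

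\textbf{Non-isomorphism.} This argument does not work. That moduli has dimension $10-2d>5-d$, and that automorphism groups are finite, gives no lower bound on the dimension of the image of your family; it could a priori be a point. Also, an abstract isomorphism $S_u\cong S_v$ need not respect $f$ or the blow-down to $\PP^1\times\PP^1$.

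What you need is that the map from $(\PP^1)^r/\PGL(W)$ to isomorphism classes of surfaces has finite fibres. A del Pezzo surface has finitely many conic-bundle classes: integral $h$ with $h^2=0$ and $-K\cdot h=2$, finite because $K^\perp$ is negative definite. Each class admits finitely many choices of one component per singular fibre to contract. The resulting identification with $B\times\PP(W)$ is unique up to $\PGL(W)$ and the finitely many automorphisms of $B$ preserving $D$.

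\textbf{Nonemptiness.} Nonemptiness of the del Pezzo locus with the $t_i$ fixed is asserted rather than proved. The claim that ``the only conditions that could fail identically are those on fibres of $p$'' is the whole content. One clean argument chooses the centres one at a time. Blowing up a point of a del Pezzo surface of degree $e\geq2$ stays del Pezzo as long as the point avoids the finitely many $(-1)$-curves and, when $e=2$, the ramification curve. The smooth fibre over a new $t_i$ is neither of these, so only finitely many $u_i$ are excluded.
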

Theorem~\ref{thm:main} shows that $\AA_S$ determines the isomorphism class of a del Pezzo surface of degree $d\leq4$. Theorem~\ref{thm:spinorintro} constructs explicit pairs of non-isomorphic del Pezzo surfaces with equivalent Clifford components, showing that $\Kf$ alone does not determine the surface.

The notation $(S_u/\PP^1)_{\mathcal F_v}$ denotes the spinor modification recalled in Section~\ref{subsec:clifford}. By \cite[Theorem 1.2]{Kuz25}, the even Clifford algebra of this modification is $f_{u*}\cEnd(\mathcal F_v)$. We identify this algebra with the normalized even Clifford algebra of $S_v/\PP^1$. Thus the theorem identifies the actual partner surface, in addition to constructing an equivalence of components.

\begin{remark}
Additivity of Hochschild homology also gives
\[
\dim_{\C}\HH_0(\Kf)=10-d,\qquad d=K_S^2.
\]
Thus the enhanced Clifford component alone determines the degree of the del Pezzo surface. As an admissible subcategory of $\AA_S$, the category $\Kf$ inherits a dg enhancement by taking the corresponding full dg subcategory of the enhancement of $\AA_S$ \cite[Remark 5.4]{BLS16}.
\end{remark}

Over the projective line, the fixed-discriminant description extends to the following characterization of spinor modification, proved in Theorem~\ref{thm:hyperbolic}.
\begin{theorem}\label{thm:hyperbolicintro}
Let $k$ be an algebraically closed field of characteristic different from two, and let $f_i:X_i\to B=\PP^1_k$, $i=1,2$, be flat conic bundles with smooth generic fibers and reduced discriminant divisors $\Delta_i$. The following conditions are equivalent:
\begin{enumerate}
\item $\Delta_1=\Delta_2$ as divisors on the fixed base $B$;
\item $X_1/B$ and $X_2/B$ are hyperbolically equivalent;
\item $X_1/B$ and $X_2/B$ are spinor modifications of one another.
\end{enumerate}
In particular, Kuznetsov's conjecture \cite[Conjecture 1.4]{Kuz25} holds for simply degenerating conic bundles over $\PP^1_k$ in this setting.
\end{theorem}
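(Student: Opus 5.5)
The plan is to prove the cycle of implications (3)$\Rightarrow$(1)$\Rightarrow$(2)$\Rightarrow$(3), working throughout with the even Clifford algebra $\mathcal B_0$ of the conic bundle. For a flat conic bundle $f:X\to B$ with reduced discriminant, $\mathcal B_0$ is an Azumaya-type $\OO_B$-algebra of rank four. It is Azumaya away from $\Delta$, and its centre becomes a quadratic algebra exactly along the degeneracy.

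\emph{Step (3)$\Rightarrow$(1).} By \cite[Theorem 1.2]{Kuz25}, the modification $(X_1/B)_{\mathcal F}$ has even Clifford algebra $f_{1*}\cEnd(\mathcal F)$. This algebra is Morita equivalent over $B$ to $\mathcal B_0(X_1)$. The discriminant of a conic bundle is read off from $\mathcal B_0$ as the locus where it fails to be Azumaya, equivalently from the discriminant of its reduced norm or centre. Morita equivalence over $B$ preserves this locus, together with its scheme structure when it is reduced. Hence $\Delta_1=\Delta_2$.

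\emph{Step (1)$\Rightarrow$(2).} This is the main obstacle. Both conic bundles are given by quadratic forms $q_i$ on rank-three bundles $E_i$ over $\PP^1$ with values in line bundles $L_i$, with $\det q_i$ cutting out $\Delta$. First, I would use $\Pic\PP^1=\Z$, Grothendieck splitting, and twisting to normalize degrees, so that $\deg\det q_1=\deg\det q_2$ and the parities of $L_i$ match. Next, since $\Delta$ is reduced, each $q_i$ has corank exactly one along $\Delta$. The Brauer class of $\mathcal B_0$ restricted to $B\smallsetminus\Delta$ is controlled by the double cover of $\Delta$ determined by the kernel line. Over an algebraically closed field this double cover of a reduced finite scheme is split. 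The generic Brauer class lies in $\operatorname{Br}(k(\PP^1))=0$ by Tsen's theorem, so both generic conics have rational points. Thus $q_1$ and $q_2$ are generically isotropic, and their generic fibres are both isometric up to scalar to $\langle1,-1,c\rangle$ with $c$ a square class determined by $\Delta$. Hyperbolic equivalence is Kuznetsov's equivalence relation generated by hyperbolic reductions and extensions along isotropic subbundles. To pass between the two forms, I would pick an isotropic line subbundle $\ell_i\subset E_i$, which exists because the generic conic is rational and sections extend over a curve. Hyperbolic reduction with respect to $\ell_i$ gives a rank-one form $\ell_i^\perp/\ell_i$, which depends only on the discriminant data. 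The two forms therefore share a common hyperbolic reduction, and hence are hyperbolically equivalent. The delicate points are exactly these: checking that the reductions agree as forms (not just up to generic isometry) once line bundle twists are adjusted, and verifying flatness and reducedness hypotheses are preserved. I expect to handle this by an explicit local computation at points of $\Delta$.

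\emph{Step (2)$\Rightarrow$(3).} Here I would invoke Kuznetsov's result that an elementary hyperbolic equivalence between conic bundles is realized by a spinor modification, with the spinor sheaf built from the isotropic subbundle \cite{Kuz25}. Since spinor modifications compose, and are symmetric via the dual spinor, a chain of elementary steps gives a single spinor modification. Finally, Kuznetsov's conjecture in this setting follows directly from (1)$\Leftrightarrow$(3).
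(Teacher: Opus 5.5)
Your proposal is correct. For (3)$\Rightarrow$(1) and (2)$\Rightarrow$(3) it is the paper's argument: the non-Azumaya locus is a $B$-linear Morita invariant and $\Delta$ is reduced, and Kuznetsov's corollary turns hyperbolic equivalence into spinor modification.

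The genuine difference is in (1)$\Rightarrow$(2).

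\emph{The paper's route.} It normalizes $L_1\cong L_2\cong\OO_{\PP^1}(-m)$ and puts each form in the local normal form $\operatorname{diag}(1,1,t)$ at the points of $\Delta$. From this it gets an isometry of the shifted quadratic cokernel sheaves supported on $\Delta$. It then applies Kuznetsov's classification theorem \cite[Theorem 1.3]{Kuz24} and checks that the Witt conditions there are vacuous. This is short, given the black box, and stays inside Kuznetsov's general framework over $\PP^n$.

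\emph{Your route.} You build the equivalence by hand. Tsen gives a section, hence an isotropic line subbundle $\ell_i$, and you reduce both conic bundles to rank-one forms that you compare directly. This exhibits an explicit common reduction and avoids the classification theorem and all Witt-class bookkeeping. It also makes visible exactly where Tsen, $\Pic\PP^1$, and algebraic closure are used. The cost is two verifications you leave vague.
\begin{enumerate}
\item Hyperbolic reduction requires $\ell_i$ to be \emph{regular}. This is automatic. Suppose $\ell_i$ passed through the vertex of a singular fiber over $p\in\Delta$, and take a local generator of $\ell_i$ as the third basis vector. Reducedness of $\Delta$ forces the value of $q_i$ on that vector to be $t$ times a unit, contradicting isotropy. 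Geometrically, a section meets each fiber with multiplicity one and so avoids the nodes.
\item The comparison of the two reductions is a global argument, not a local computation at $\Delta$. Regularity makes $\det q_i$ a unit multiple of the determinant of the reduced form. So the reduced form on $\mathcal M_i=\ell_i^\perp/\ell_i$ is a section of $\mathcal M_i^{-2}\otimes L^{-1}$ with divisor exactly $\Delta$. Once $L_1\cong L_2$, the degrees agree, so $\mathcal M_1\cong\mathcal M_2$; this uses that $\Pic\PP^1$ has no $2$-torsion. The two sections then differ by a scalar, and a square root of it gives the isometry.
\end{enumerate}
Your discussion of the Brauer class, the double cover of $\Delta$, and $\langle1,-1,c\rangle$ is not needed; Tsen alone produces the section.

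Finally, your chain passes through a rank-one form, which is not a conic bundle. So there is no sequence of conic-bundle spinor modifications to compose. For (2)$\Rightarrow$(3), cite \cite[Corollary 1.3]{Kuz25} for the whole hyperbolic equivalence, as the paper does.
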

 For a quartic del Pezzo surface, Proposition~\ref{prop:ten} identifies its ten conic-bundle classes with the two exceptional simple objects over each of the five stacky points associated with the anticanonical pencil. Taking the right orthogonal to one such object removes the corresponding stacky point and produces a four-point model of the selected Clifford component.

\subsection{Outline of the proofs}
For $d=1,2,3$, the anticanonical models are the quasi-smooth hypersurfaces
\[
S_1\subset\PP(1,1,2,3),\quad S_2\subset\PP(1,1,1,2),\quad S_3\subset\PP^3
\]
of weighted degrees $6,4,3$, respectively. Orlov's theorem identifies $\AA_S$ with the corresponding category of graded matrix factorizations. The kernel formula of Ballard--Favero--Katzarkov decomposes every Hochschild--Serre component into an identity sector and possible twisted sectors. In the generating bidegrees displayed above the relevant pieces are untwisted. On the diagonal $\HS^{2e,-e}$ their products are ordinary products in the Jacobian ring. They generate the full Jacobian algebra: one bidegree suffices for $d=2,3$, while the weight-two variable for $d=1$ forces the second bidegree $\HS^{4,-2}$. Graded homogeneous reconstruction then recovers the defining equation.

For $d=4$, the surface is an intersection of two quadrics in $\PP^4$. The pencil has a reduced discriminant divisor $D\subset\PP^1$ of degree five, and the residual category is the derived category of the stacky line $\mathcal X_D=\rootstack{\PP^1}{D}$. Since its Serre functor is $(-\otimes\omega_{\mathcal X_D})[1]$, the diagonal $\bigoplus_{p\geq0}\HS^{-p,p}$ is the orbifold canonical ring. It has a presentation
\[
\frac{\C[u,v,w]}{(w^2-\ell_1(u,v)\cdots\ell_5(u,v))},\qquad
\deg u=\deg v=2,\quad\deg w=5,
\]
which recovers the unordered five-point divisor and hence the simultaneous diagonalization class of the pencil.

The equivariant proof uses \eqref{eq:involintro}. Every equivalence intertwines the intrinsic involutions, and a scalar adjustment makes the intertwiner compatible with their square relations. The resulting equivalence of equivariant categories preserves topological K-theory with its Hodge structure and Euler pairing. The cyclic-cover decomposition exhibits the branch category plus one exceptional object in degree two and plus two exceptional objects in degree one. Their odd topological K-groups vanish, so the resulting polarized weight-one Hodge structure is $H^1(C,\Z)$. Classical Torelli recovers the branch curve. Its canonical embedding extends the isomorphism to $\PP^2$, respectively to the quadric-cone model of $\PP(1,1,2)$, and the double covers are then recovered.

For the spinor statement, fix the discriminant points and vary the second coordinates of the blow-up centers on $\PP^1\times\PP^1$. An elementary transformation constructs $\mathcal F_v$ and verifies
\[
Rf_{u*}\mathcal F_v=0,\qquad \det\mathcal F_v\cong\omega_{S_u/\PP^1}.
\]
Applying the same construction on $S_v$ identifies its diagonal member with the normalized canonical spinor. Computing the two relative endomorphism algebras proves the claimed modification. If $i:\Kf\hookrightarrow\AA_S$, relative duality identifies the adjoint image of the exceptional object as
\[
i^!E_f\cong\mathcal F^0_{S/B}\otimes f^*\OO_B(-1).
\]
Transporting the analogous object from a partner gives its modifying spinor, with the displayed base twist.

For hyperbolic equivalence, write each conic bundle as a ternary line-bundle-valued quadratic form. Equal reduced discriminants give isometric shifted quadratic cokernel sheaves after a common twist normalization. The remaining Witt condition in \cite[Theorem 1.3]{Kuz24} is automatic over an algebraically closed field when the base is $\PP^1$. The converse follows from \cite[Proposition 1.1]{Kuz24}, while \cite[Corollary 1.3]{Kuz25} relates hyperbolic equivalence to spinor modification. In degree four, the five singular quadrics in the anticanonical pencil index five pairs of conic-bundle classes. The canonical heart in \cite[Section 2.4]{Ela26} identifies these classes with the ten exceptional simple objects; perpendicular reduction gives the associated four-point categories.

\subsection{Comparison with categorical Torelli for Kuznetsov components}
The distinction between $\AA_S$ and $\Kf$ is parallel to a familiar phenomenon for Gushel--Mukai varieties. In a decomposition
\[
\Db(X)=\langle\mathcal{K}u(X),E_1,\ldots,E_s,\OO_X\rangle,
\]
the Kuznetsov component is the residual category obtained by removing the standard exceptional block, while $\langle\OO_X\rangle^\perp$ retains the other exceptional objects. The smaller component may retain only a birational or period-partner class; additional gluing data can be encoded by a distinguished object obtained from the exceptional block.

Indeed, two smooth complex GM threefolds have equivalent enhanced Kuznetsov components if and only if they are period partners or duals; in particular, such an equivalence implies birationality \cite[Theorem 1.9]{BP23}. For general ordinary GM threefolds, an equivalence preserving the distinguished object $j_X^!\mathcal U_X$, where $j_X:\mathcal{K}u(X)\hookrightarrow\Db(X)$ is the inclusion, determines $X$ up to isomorphism \cite[Theorem 9.2]{JLLZ24}. Thus the additional datum is precisely the adjoint image of the exceptional object retained by the orthogonal to $\OO_X$. In our setting its analogue is
\[
i^!E_f\cong\mathcal F^0_{S/B}\otimes f^*\OO_B(-1),
\]
as established in Proposition~\ref{prop:adjoint}.

For a del Pezzo conic bundle, $\Kf$ is invariant under spinor modification, whereas the larger category $\AA_S$ reconstructs the surface. The corresponding distinguished object is explicit. If $S'$ is a spinor partner and $\Phi_{S',S}:\mathcal K_{f'}\to\Kf$ is the normalized induced equivalence, then
\[
\Phi_{S',S}(i_{S'}^!E_{f'})\cong\mathcal F_{S'/S}\otimes f^*\OO_B(-1).
\]
Thus the partner's adjoint image becomes the abstract spinor selecting that partner from the Clifford category. This provides a surface example of reconstruction using a distinguished object.

\subsection{Upcoming work}
The results above form part of a broader program to study the geometric information carried by the Hochschild--Serre algebra of a nontrivial admissible subcategory. For an admissible subcategory $\AA\subset\Db(X)$, the bigraded algebra $\HS(\AA)$ records natural transformations from the identity functor to shifted powers of the Serre functor, together with their compositions. The objective is to determine when this intrinsic multiplicative structure recovers geometric data after passage to the subcategory. The two principal targets are reconstruction of $X$ from $\OO_X^\perp$ and categorical Torelli statements formulated in terms of $\mathcal{K}u(X)$.

The first direction concerns smooth Fano varieties for which the anticanonical model is quadratically defined. The project \cite{LZ26a} studies whether multiplication in $\HS(\OO_X^\perp)$ recovers the quadratic equations of the anticanonical model and hence reconstructs $X$ in arbitrary dimension. This extends the present problem to a higher-dimensional class for which the equations are expected to be recovered directly from the orthogonal category.

A second direction develops Hochschild--Serre reconstruction for conic bundles and quadric fibrations through their Clifford components \cite{LZ26b}. Here the reconstruction problem must retain the linear structure over the base whenever it is not intrinsic to the abstract category. The aim is to recover the discriminant and the relevant Clifford data, and then to use these data for the corresponding fibrations and complete intersections of quadrics.

Finally, the same program is being developed for complete intersections of hypersurfaces. The $(2,3)$-complete-intersection case in $\PP^5$, together with further examples, is considered in \cite{LZ26c}. A complementary construction based on categorical Griffiths residues and the Cayley trick is pursued in \cite{LZ26d}. The intended comparison is between the Hochschild--Serre algebra of the residual category and the Jacobian-type algebra attached to the complete intersection; establishing multiplicativity and invariance under enhanced equivalences is part of these works in preparation.

\subsection{Organization}
Section 2 recalls the categorical and geometric preliminaries, including Hochschild--Serre algebras, matrix factorizations, Clifford components, abstract spinors, root stacks, and canonical algebras. Section 3 proves Theorem~\ref{thm:main} by Hochschild--Serre reconstruction. Section 4 gives the equivariant proof in degrees one and two. Section 5 constructs the spinor partners and identifies their canonical adjoint images. Section 6 compares spinor modification with hyperbolic equivalence and describes the five pairs of quartic conic-bundle classes. Section 7 translates the construction into weighted projective lines and canonical algebras.
\subsection{Acknowledgments}
Marco Rampazzo is supported by the Tsinghua-YMSC--Imperial College joint postdoctoral program in algebraic geometry. Shizhuo Zhang is supported by the Sun Yat-sen University Starting Grant 34000-12256019.

\subsection{AI disclosure}
The categorical Torelli theorems in degrees one, two, and three were proved entirely by the authors. Artificial intelligence (AI) was subsequently used to review the arguments, assist with verification, and identify and correct typographical errors.

In degree four, AI drew our attention to the equivalence between $\OO_S^\perp$ and the derived category of a weighted projective line. The authors then computed the diagonal subalgebra of the Hochschild--Serre algebra and proved the reconstruction theorem by this method.

AI also suggested studying del Pezzo surfaces of degree four as conic bundles. The authors constructed the spinor modifications and proved the identification of the abstract spinor bundle, with the normalization specified in the text, with the gluing object obtained from the exceptional complement by the right adjoint functor. This identification was inspired by earlier work on categorical Torelli for Gushel--Mukai threefolds \cite{JLLZ24}.

AI also assisted in organizing the representation-theoretic comparison in Section 7 and checking its compatibility with the Clifford-order description. The authors formulated the resulting statement and take responsibility for the final text and the correctness of its mathematical arguments.

\section{Preliminaries on derived categories, Clifford components, and stacky lines}
\subsection{Enhancements, orthogonals, and mutations}
Throughout the main text the base field is $\C$, except in Theorem~\ref{thm:hyperbolic}. All triangulated categories are homotopy categories of pretriangulated, idempotent-complete dg categories, and all equivalences are linear enhanced Morita equivalences over the stated base field. Functors between enhanced categories are understood as derived quasi-functors, or equivalently perfect bimodules in the smooth proper case. Their composition is derived tensor product. These conventions specify the functor categories in \eqref{eq:HSintro} and the equivariant constructions.

For an exceptional object $E\in\mathcal T$, we use the right-orthogonal convention
\[
E^\perp=\{F\in\mathcal T\mid\RHom(E,F)=0\}.
\]
The left mutation through $E$ is defined by the triangle
\[
\RHom(E,F)\otimes E\longrightarrow F\longrightarrow\LL_E(F),
\]
and the right mutation by
\[
\RR_E(F)\longrightarrow F\longrightarrow\RHom(F,E)^\vee\otimes E.
\]
If $\mathcal T=\langle\AA,E\rangle$, the inclusion $j:\AA\hookrightarrow\mathcal T$ has both adjoints. In particular $j^!E$ records the functor $G\mapsto\RHom(G,E)$ on $\AA$; this adjoint image will later become a spinor bundle.

If $\mathcal T$ is smooth and proper, its Serre functor is characterized by
\[
\Hom(F,G)^\vee\cong\Hom(G,S_{\mathcal T}F).
\]
An enhanced equivalence intertwines Serre functors. A chosen root of a Serre functor, a base-linear action, and a t-structure require a separate compatibility statement whenever they are used.

\subsection{The Hochschild--Serre algebra}
\begin{definition}\label{def:HS}
For a smooth proper dg category $\AA$, define
\[
\HS^{p,q}(\AA)=\Hom_{\Fun(\AA,\AA)}(\id_{\AA},S_{\AA}^q[p]),
\qquad \HS(\AA)=\bigoplus_{p,q\in\Z}\HS^{p,q}(\AA).
\]
For $a:\id\to S^q[p]$ and $b:\id\to S^{q'}[p']$, set
\[
b\star a:=S^q(b)[p]\circ a:\id\longrightarrow S^{q+q'}[p+p'].
\]
\end{definition}
The formula fixes the order of multiplication. We use the coherent powers of the Serre bimodule, including its inverse; the shifts have the usual Koszul convention. Conjugation by an invertible bimodule, with the induced coherent identifications of powers, gives an isomorphism of bigraded algebras \cite[Theorem 3.4]{LZ24}. No choice of a geometric polarization is included in this invariant.

For consistency checks we also use Hochschild homology. A complex del Pezzo surface of degree $d$ has Picard rank $10-d$. The Hochschild--Kostant--Rosenberg isomorphism gives
\[
\dim\HH_0(S)=12-d,\qquad \HH_i(S)=0\quad(i\neq0).
\]
The decomposition $\Db(S)=\langle\AA_S,\OO_S\rangle$ therefore yields
\begin{equation}\label{eq:HHdegree}
\dim\HH_0(\AA_S)=11-d.
\end{equation}

\subsection{Anticanonical models and matrix factorizations}
For $d=1,2,3$ the anticanonical model is a quasi-smooth weighted hypersurface
\[
S_d=\{W_D=0\}\subset\PP(a_0,a_1,a_2,a_3)
\]
with the following weights and degrees:
\begin{center}
\begin{tabular}{cccc}
\toprule
$d$&$(a_0,a_1,a_2,a_3)$&$D$&$\sum_i a_i$\\\midrule
1&$(1,1,2,3)$&6&7\\
2&$(1,1,1,2)$&4&5\\
3&$(1,1,1,1)$&3&4\\\bottomrule
\end{tabular}
\end{center}
The weighted ambient spaces are regarded as smooth quotient stacks when applying derived-category results. The hypersurfaces avoid their nontrivial stabilizer loci, so their derived categories are those of the smooth del Pezzo schemes. Let $P=\C[x_0,x_1,x_2,x_3]$, with $\deg x_i=a_i$. The potential $W_D\in P_D$ has an isolated critical point at the origin and $\sum_i a_i=D+1$. The graded Gorenstein ring $P/(W_D)$ has parameter one. By \cite[Theorem 2.5, Corollary 2.18, and Theorem 3.10]{Orl09}, and mutating the exceptional structure sheaf to the right, we obtain
\begin{equation}\label{eq:Orlov}
\Db(S_d)=\langle\AA_S,\OO_{S_d}\rangle,\qquad
\AA_S\simeq\operatorname{MF}^{\mathrm{gr}}(W_D).
\end{equation}
Let $\tau=(1)$ be internal grade shift on the enhanced graded matrix-factorization category. Then
\begin{equation}\label{eq:Serre}
\tau^D\cong[2],\qquad S_{\AA_S}\cong\tau^{-1}[2],\qquad
S_{\AA_S}^D\cong[2D-2].
\end{equation}
In particular $\tau\cong S_{\AA_S}^{-1}[2]$ is intrinsic here. Geometrically it corresponds to tensoring with the anticanonical polarization and left mutating through $\OO_S$.

We recall the additive sector formula. For $g\in\mu_D$, put
\[
I_g=\{i\mid g^{a_i}\neq1\},\qquad r_g=|I_g|,\qquad
k_g=-\sum_{i\in I_g}a_i,
\]
and let $W_g$ be the restriction of $W_D$ to the fixed locus of $g$. In the conventions of \cite[Theorem 5.39]{BFK14}, as unpacked in \cite[Theorem 3.5 and Remark 3.6]{LRZ24},
\begin{equation}\label{eq:BFK}
\Hom(\Delta,\Delta(t)[m])\cong
\bigoplus_{\substack{g\in\mu_D\\r_g\equiv m\ (2)}}
\Jac(W_g)_{t-k_g+D(m-r_g)/2}.
\end{equation}
Here $\Delta$ is the diagonal bimodule. For $\HS^{p,q}$, equation \eqref{eq:Serre} gives
\[t=-q,\qquad m=2q+p.\]
The summand $g=1$ is the identity sector. We will use multiplication only on the diagonal $m=0$, or $(p,q)=(2e,-e)$. On this diagonal, identity-sector classes are multiplication maps by homogeneous polynomials; their products are the ordinary Jacobian products \cite[Remark 3.7]{LRZ24}. This specifies the multiplicative comparison actually needed below. We do not identify the full, periodic Hochschild--Serre algebra with a single copy of the Jacobian ring.

\subsection{Conic bundles and Clifford components}\label{subsec:clifford}
Let $f:S\to B$ be a flat conic bundle over a smooth curve, presented by a line-bundle-valued quadratic form $q:L\to\Sym^2E^\vee$, with $\rk E=3$. Write $\mathcal B_0=\Cl_0(q)$ for its even Clifford algebra. By \cite[Theorem 4.2]{Kuz08}, there is a Fourier--Mukai embedding and a semiorthogonal decomposition
\begin{equation}\label{eq:quadric}
\Db(S)=\langle\Db(B,\mathcal B_0),f^*\Db(B)\rangle.
\end{equation}
We choose the embedding whose image is $\Ker(Rf_*)=:\Kf$. For $B=\PP^1$ this gives
\begin{equation}\label{eq:conicsod}
\Db(S)=\langle\Kf,E_f,\OO_S\rangle,\qquad
\AA_S=\langle\Kf,E_f\rangle,\qquad E_f=f^*\OO_B(-1).
\end{equation}
Indeed $R\Gamma(B,\OO_B(-1))=0$, and for $G\in\Kf$,
\[\RHom_S(\OO_S,G)=R\Gamma(B,Rf_*G)=0.\]
The standard t-structure of $\Db(S)$ restricts to $\Kf$ by \cite[Lemma 2.9]{Kuz25}. This is the t-structure used for spinor equivalences.
\begin{definition}\label{def:spinor}
An abstract spinor bundle on $f:S\to B$ is a rank-two vector bundle $\mathcal F$ such that
\[
Rf_*\mathcal F=0,\qquad c_1(\mathcal F)=K_{S/B}\text{ in }\Pic(S)/f^*\Pic(B).
\]
\end{definition}
Its relative endomorphism algebra $f_*\cEnd(\mathcal F)$ is pointwise Clifford. The construction of \cite[Theorem 1.2]{Kuz25} associates a conic bundle $S_{\mathcal F}\to B$, called the spinor modification.
\begin{theorem}\label{thm:kuzspinor}
By \cite[Theorem 1.2]{Kuz25}, an abstract spinor bundle $\mathcal F$ determines a flat conic bundle $S_{\mathcal F}/B$ with
\[\Cl_0(S_{\mathcal F}/B)\cong f_*\cEnd(\mathcal F).\]
Its even Clifford algebra is $B$-linearly Morita equivalent to that of $S/B$, and there is a $B$-linear t-exact Fourier--Mukai equivalence
\[\Ker(S_{\mathcal F}/B)\xrightarrow{\sim}\Ker(S/B).\]
It sends the canonical spinor of the reconstructed quadratic presentation to $\mathcal F$. Conversely, a $B$-linear Morita equivalence of even Clifford algebras, or a $B$-linear t-exact Fourier--Mukai equivalence of kernel categories, realizes the other conic bundle as a spinor modification.
\end{theorem}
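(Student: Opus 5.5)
We would follow the structure of \cite[Theorem 1.2]{Kuz25} and organize the argument in three steps: reconstruction of a conic bundle from the algebra $\mathcal A:=f_*\cEnd(\mathcal F)$, construction of the Fourier--Mukai equivalence, and the converse.

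\emph{Step 1: reconstruction.} First, $\mathcal A$ is a locally free $\OO_B$-algebra of rank four. This uses $Rf_*\mathcal F=0$, flatness of $f$, and cohomology and base change. On every fiber, $\mathcal F$ restricts to a rank-two bundle on a conic with no cohomology, so its endomorphism algebra is a quaternion-type algebra, namely $M_2$ on smooth fibers and its degeneration on singular fibers. Using the reduced trace, we split $\mathcal A=\OO_B\oplus\mathcal A^0$. We then recover a rank-three bundle $E$ and a line bundle $L$ with $\mathcal A^0\cong\Lambda^2E\otimes L$ (up to the usual twist), together with a quadratic form $q:L\to\Sym^2E^\vee$. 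The form is read off from the multiplication $\mathcal A^0\otimes\mathcal A^0\to\mathcal A$; this is the classical inverse of the even Clifford construction for ternary forms. By construction $\Cl_0(q)\cong\mathcal A$, and we set $S_{\mathcal F}:=\{q=0\}\subset\PP_B(E)$. The condition $c_1(\mathcal F)=K_{S/B}$ modulo $f^*\Pic(B)$ fixes the normalization of $L$ and $E$. Flatness of $S_{\mathcal F}$ follows because $q$ vanishes on no fiber, since $\mathcal A$ is pointwise Clifford.

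\emph{Step 2: the equivalence.} View $\mathcal F$ as a right $f^*\mathcal A$-module and define
\[
\Psi:\Db(B,\mathcal A)\to\Db(S),\qquad M\mapsto f^*M\otimes_{f^*\mathcal A}\mathcal F.
\]
Its image lies in $\Ker(Rf_*)$ by the projection formula and $Rf_*\mathcal F=0$. To show $\Psi$ is fully faithful, we compute the right adjoint composed with $\Psi$ via relative duality. This reduces to $Rf_*\cEnd(\mathcal F)=\mathcal A$ in degree zero, which needs the vanishing of $R^1f_*\cEnd(\mathcal F)$; we check that fiberwise. Essential surjectivity onto $\Kf$ comes from Kuznetsov's decomposition \eqref{eq:quadric}. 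Both $\Psi$ and the standard embedding of $\Db(B,\Cl_0(S/B))$ are fully faithful with image in $\Ker(Rf_*)$, and the latter already equals $\Ker(Rf_*)$. We would show that $\Psi$ generates this image by restricting to fibers, or by checking that its right orthogonal in $\Kf$ vanishes on points. Composing with the equivalence $\Db(B,\Cl_0(S_{\mathcal F}/B))\simeq\Ker(S_{\mathcal F}/B)$ given by the canonical spinor produces the $B$-linear Fourier--Mukai equivalence, and it sends the canonical spinor to $\mathcal F$. The functor is t-exact because $\mathcal F$ is locally projective over $f^*\mathcal A$. Morita equivalence of the two Clifford algebras then follows formally from the equivalence.

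\emph{Step 3: the converse.} A $B$-linear Morita equivalence is given by an invertible bimodule. A $B$-linear t-exact Fourier--Mukai equivalence of kernels sends the canonical spinor of the other conic bundle to a sheaf $\mathcal F$ in the heart. We then verify the following properties of $\mathcal F$:
\begin{itemize}
\item it is locally free of rank two, by checking on fibers using $B$-linearity;
\item it satisfies $Rf_*\mathcal F=0$;
\item its relative endomorphism algebra is the transported Clifford algebra;
\item its determinant condition holds, which follows by computing $c_1$ fiberwise.
\end{itemize}
Step 1 then identifies the other conic bundle with $S_{\mathcal F}$.

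\emph{Main obstacle.} We expect the hardest part to be Step 1 over the discriminant: recovering $(E,L,q)$ from $\mathcal A$ integrally, including at the singular fibers, and matching the normalization with $\det\mathcal F$. Away from the discriminant the statement is Azumaya and local. At singular fibers we would first try an explicit local computation with the standard degenerate form $x_0^2+x_1^2+tx_2^2$.
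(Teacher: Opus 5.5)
The paper gives no proof of this statement; it quotes \cite[Theorem 1.2]{Kuz25}, so your outline has to stand on its own. Its architecture is the natural one: reconstruct a ternary form from $\mathcal A=f_*\cEnd(\mathcal F)$, use $\Psi(M)=f^*M\otimes_{f^*\mathcal A}\mathcal F$, and for the converse transport the canonical spinor. The gap is that you never use the hypothesis $c_1(\mathcal F)=K_{S/B}$ where it matters, and you leave open the step that carries the content of the theorem.

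\emph{Your fiberwise claim is false.} In Step 1 you deduce from acyclicity alone that the endomorphism algebra on a reducible fiber $C=L_1\cup L_2$ is the degenerate Clifford algebra. Take the bundle equal to $\OO^{\oplus2}$ on $L_1$ and $\OO(-1)^{\oplus2}$ on $L_2$, glued at the node. It is acyclic, but its endomorphism algebra is $M_2(\C)$.

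\emph{This happens globally.} On the blow-up model \eqref{eq:blowup}, take $\mathcal F=\OO_{S_u}(-H)\otimes W$. It has rank two, $Rf_*\mathcal F=0$, and $f_*\cEnd(\mathcal F)=\End(W)\otimes\OO_B$. Every step you actually carry out works for it: Step 1 produces a smooth conic bundle, and $\Psi$ is fully faithful and t-exact with image $f^*\Db(B)\otimes\OO(-H)$. Yet $\Psi$ misses $\OO_{E_i}(-1)\in\Kf$. Moreover, the Azumaya algebra $\End(W)\otimes\OO_B$ is not Morita equivalent to $\Cl_0(S_u/B)$. So the generation claim you postpone ("restrict to fibers, or check the right orthogonal") is exactly where the determinant hypothesis must enter, and your plan does not say how.

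\emph{The missing idea is fiberwise.} The condition $c_1(\mathcal F)=K_{S/B}$ modulo $f^*\Pic(B)$ forces degree $-1$ on each component of every reducible fiber, ruling out the bidegrees $(0,-2)$ and $(-2,0)$. Together with acyclicity this gives $\mathcal F|_{L_j}\cong\OO\oplus\OO(-1)$, with the two trivial lines distinct at the node. Hence $\mathcal F|_C$ is the sum of the line bundles of bidegrees $(-1,0)$ and $(0,-1)$. These are the two indecomposable projectives of the degenerate Clifford algebra, whereas the example above has two copies of a single one. Consequently $\mathcal F$ corresponds under $\Kf\simeq\Db(B,\Cl_0(S/B))$ to a locally projective local generator. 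Morita theory then gives three things at once:
\begin{itemize}
\item essential surjectivity of $\Psi$;
\item the Morita equivalence of the two Clifford algebras;
\item equality of the discriminants of $S_{\mathcal F}$ and $S$.
\end{itemize}

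The determinant hypothesis does not fix the normalization of $E$ and $L$. The quadric $\{q=0\}$ is already determined by $\mathcal A$ up to the twist $(E,L)\mapsto(E\otimes M,L\otimes M^{-2})$, and the paper imposes $\det E\otimes L\cong\OO_B$ separately. Accordingly, Step 1 is not the main obstacle: it is the classical correspondence between ternary forms and quaternion orders, cf.\ \cite[Proposition 2.7]{Kuz25}. The real work is the generation argument above. It must also handle double-line fibers, which a general flat conic bundle may have and which your fiberwise analysis omits.
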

We fix the following normalization. A conic bundle is presented by its relative anticanonical embedding, so that $\OO_{S/B}(1)=\omega_{S/B}^{-1}$ and $\det E\otimes L\cong\OO_B$. The normalized canonical spinor $\mathcal F^0_{S/B}$ is the unique acyclic extension
\begin{equation}\label{eq:canonicalintro}
0\longrightarrow\omega_{S/B}\longrightarrow\mathcal F^0_{S/B}
\longrightarrow\OO_S\longrightarrow0,
\end{equation}
whose connecting morphism $\OO_B\to R^1f_*\omega_{S/B}\cong\OO_B$ is the identity. For an arbitrary presentation, \cite[Lemma 2.16]{Kuz25} distinguishes this extension from the canonical Clifford-module spinor by the base twist $f^*(\det E\otimes L)$. They agree in the normalization just fixed. All uses of the canonical spinor below refer to this normalization.

\subsection{Root stacks, weighted projective lines, and canonical algebras}
Let $D=t_1+\cdots+t_r\subset\PP^1$ be reduced. The root stack
\[\mathcal X_D:=\rootstack{\PP^1}{D}\]
is the weighted projective line with weight two at every $t_i$. Locally at a marked point it has the presentation
\[[\Spec k[[u]]/\mu_2],\qquad t=u^2.\]
A split hereditary order of ramification index two is locally Morita equivalent to
\begin{equation}\label{eq:Iwahori}
\begin{pmatrix}\C[[t]]&\C[[t]]\\t\C[[t]]&\C[[t]]\end{pmatrix}.
\end{equation}
The order--stack correspondence identifies its coherent module category with $\coh\mathcal X_D$ \cite{CI04}. In our situation this correspondence is untwisted: the generic conic over $\C(t)$ is split, and the Brauer group of the root stack vanishes. Indeed the coarse curve has trivial Brauer group and the possible residue groups are $H^1(\Spec\C,\Z/2)=0$. Thus for a conic bundle with simple discriminant $D$,
\begin{equation}\label{eq:rootcliff}
\Kf\simeq\Db(\PP^1,\mathcal B_0)\simeq\Db(\coh\mathcal X_D).
\end{equation}
Over a nonclosed field the stack model may carry an Azumaya twist; such descent data are outside the complex reconstruction statements.

For weights $\mathbf p=(p_1,\ldots,p_r)$, the Geigle--Lenzing lattice and dualizing element are
\[
\mathbb L(\mathbf p)=\langle\vec c,\vec x_1,\ldots,\vec x_r\mid p_i\vec x_i=\vec c\rangle,
\qquad \vec\omega=(r-2)\vec c-\sum_i\vec x_i.
\]
The canonical tilting bundle is
\begin{equation}\label{eq:tilting}
T_{\mathrm{can}}=\OO\oplus\bigoplus_{i=1}^r\bigoplus_{j=1}^{p_i-1}\OO(j\vec x_i)\oplus\OO(\vec c).
\end{equation}
We use right modules and put $\Lambda(\mathbf p;D)=\End(T_{\mathrm{can}})$, with composition as multiplication. Then $\RHom(T_{\mathrm{can}},-)$ takes values in right $\Lambda$-modules, and \cite{GL87} gives
\begin{equation}\label{eq:GL}
\Db(\coh\mathcal X(\mathbf p;D))\simeq\Db(\modu\text{-}\Lambda(\mathbf p;D)).
\end{equation}
Using the opposite algebra instead would require left modules. An honest conic bundle produces only weights two: its even Clifford algebra is a quaternion order, so every simple ramification index is two.

\subsection{Equivariant categories and root-stack quotients}
Let a finite group $G$ act coherently on an enhanced category $\AA$. We write $\AA^G$ for the equivariant category. An equivalence supplied with a coherent intertwiner induces an equivalence of equivariant categories; see \cite[Theorem 6.10]{Ela14}. An isomorphism between the underlying autoequivalences alone does not specify that coherence. It will be checked in Section 4.

For a double cover $X\to Y$ branched along a smooth Cartier divisor $C$, with deck group $\mu_2$, one has
\begin{equation}\label{eq:quotient}
[X/\mu_2]\cong\rootstack{Y}{C}.
\end{equation}
This also holds for a smooth Deligne--Mumford base stack $Y$ and a representable double cover. It follows locally from $w^2=t$ and the root-stack definition \cite[Section 2]{Cad07}. By \cite[Theorem 6.11 and Corollary 6.12]{BS20}, the derived category has a decomposition with components $\Db(C)$ and $\Db(Y)$:
\begin{equation}\label{eq:rootSOD}
\Db([X/\mu_2])=\langle\Db(C),\Db(Y)\rangle,
\end{equation}
with the appropriate character twists in the embeddings. Equivariantizing a preserved decomposition $\Db(X)=\langle\AA,\OO_X\rangle$ instead gives
\begin{equation}\label{eq:eqSOD}
\Db([X/G])=\langle\AA^G,\langle\OO_X\rangle^G\rangle,
\qquad \langle\OO_X\rangle^{\mu_2}\simeq\Db(\Rep\mu_2),
\end{equation}
where the last category has two orthogonal exceptional generators \cite[Theorem 6.3]{Ela12}.

\section{Categorical Torelli theorem via Hochschild--Serre algebra}
We now prove Theorem~\ref{thm:main}.
\subsection{Proof of categorical Torelli for del Pezzo surfaces of degree at most four}
\begin{lemma}\label{lem:product}
Under \eqref{eq:BFK}, the identity sectors in
\[
\bigoplus_{e\geq0}\Hom(\Delta,\Delta(e))
=\bigoplus_{e\geq0}\HS^{2e,-e}(\AA_S)
\]
form a graded subalgebra isomorphic to $\Jac(W_D)$.
\end{lemma}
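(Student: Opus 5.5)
The plan is to read off the identity-sector contribution from \eqref{eq:BFK} on the diagonal $m=0$, and then to identify its multiplication using the description of identity-sector classes as multiplication maps.

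First we check the bidegree bookkeeping. For $(p,q)=(2e,-e)$, \eqref{eq:Serre} gives $t=-q=e$ and $m=2q+p=0$. By \eqref{eq:Serre}, $S_{\AA_S}^{-e}[2e]\cong\tau^{e}[-2e][2e]=\tau^e$. Hence $\HS^{2e,-e}(\AA_S)=\Hom(\Delta,\Delta(e))$, which is the displayed equality.

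Next, for $g=1$ we have $I_1=\emptyset$, so $r_1=0$ and $k_1=0$, and $W_1=W_D$. The parity condition $r_1\equiv m=0$ holds. The identity summand of \eqref{eq:BFK} in $\Hom(\Delta,\Delta(e))$ is therefore $\Jac(W_D)_{e-0+D(0-0)/2}=\Jac(W_D)_e$. Summing over $e\geq0$ gives $\bigoplus_{e\ge0}\Jac(W_D)_e=\Jac(W_D)$ as a graded vector space. Here we use that $P$ is nonnegatively graded and that $\Jac(W_D)$ is finite dimensional because $W_D$ has an isolated critical point. We note that the twisted sectors $g\neq1$ occupy other summands, and we only need the identity-sector summand to be a well-defined subspace, which the BFK decomposition provides.

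Multiplicativity is the main point. On the diagonal, the product of $a\in\HS^{2e,-e}$ and $b\in\HS^{2e',-e'}$ is $S^{-e}(b)[2e]\circ a$, and the composite functor $S^{-e}[2e]$ is $\tau^e$. An identity-sector class in $\Hom(\Delta,\Delta(e))$ is represented, by \cite[Remark 3.7]{LRZ24}, by the natural transformation given by multiplication by a homogeneous polynomial $h\in P_e$ on the diagonal matrix factorization. This representative is well defined modulo the Jacobian ideal: partial derivatives act by null-homotopic maps. Applying $\tau^e$ to multiplication by $h'$ yields multiplication by $h'$ on the shifted diagonal, with no sign, because the shift $[2e]$ is even and grade shift commutes with multiplication by polynomials. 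The composite is then multiplication by $h'h$. Thus $b\star a$ corresponds to $h'h$ in $\Jac(W_D)_{e+e'}$, and the product stays in the identity sector. The unit $\id_\Delta$ corresponds to $1$.

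The expected obstacle is checking that the coherent identifications $S^{q}S^{q'}\cong S^{q+q'}$ used in Definition~\ref{def:HS} agree, under \eqref{eq:Serre}, with the strict equality $\tau^e\tau^{e'}=\tau^{e+e'}$ on graded matrix factorizations. Any discrepancy would be a scalar on each bigraded piece, forming a $2$-cocycle on $\mathbb Z_{\ge0}$. Such a cocycle is a coboundary, so rescaling the identification in each degree $e$ absorbs it and the graded algebra isomorphism type is unaffected. Commutativity of the product is automatic because the Jacobian ring is commutative.
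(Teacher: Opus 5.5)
Your proposal is correct and follows essentially the same route as the paper: identity-sector classes are multiplication maps by $h\in P_e$, which are well defined modulo the Jacobian ideal because they are null-homotopic in the Koszul model and are identified with $\Jac(W_D)_e$ via \cite[Remark 3.7]{LRZ24}; composition is multiplication by $h'h$, and the twist and the even shift introduce no sign. Your extra cocycle argument comparing the coherent identifications $S^qS^{q'}\cong S^{q+q'}$ with $\tau^e\tau^{e'}=\tau^{e+e'}$ is a harmless refinement that the paper leaves implicit, but note that your claim that any discrepancy is a scalar uses $\HH^0(\AA_S)=\C$, which follows from \eqref{eq:BFK} at $t=m=0$.
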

\begin{proof}
Multiplication by a homogeneous polynomial $a\in P_e$ gives a morphism from the diagonal factorization to its twist by $e$. In the Koszul model of the diagonal, multiplication by each $\partial_iW_D$ is null-homotopic. Thus these maps factor through $\Jac(W_D)_e$. The additive kernel calculation identifies their classes with the identity sector in \eqref{eq:BFK}. Composition of multiplication maps by $a$ and $b$ is multiplication by $ab$. The twists do not change these polynomial representatives, and the cohomological kernel degree is zero, so no additional sign enters. This is the multiplicative comparison stated in \cite[Remark 3.7]{LRZ24}. Since the identity-sector map is injective in every degree by \eqref{eq:BFK}, the conclusion follows.
\end{proof}
\begin{lemma}\label{lem:generate}
Suppose $e_1,\ldots,e_s>0$ satisfy
\[
\HS^{2e_i,-e_i}(\AA_S)=\Jac(W_D)_{e_i}
\]
with no nonidentity-sector summand. If these graded pieces generate $\Jac(W_D)$, then
\[
\langle\HS^{2e_1,-e_1}(\AA_S),\ldots,\HS^{2e_s,-e_s}(\AA_S)\rangle\alg
\cong\Jac(W_D)
\]
as graded algebras, where the bidegree $(2e,-e)$ has algebra degree $e$.
\end{lemma}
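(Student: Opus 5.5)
The plan is to reduce everything to Lemma~\ref{lem:product}. That lemma shows that the identity sectors on the diagonal $\bigoplus_{e\geq0}\HS^{2e,-e}(\AA_S)$ form a graded subalgebra $J\subset\bigoplus_{e\geq0}\HS^{2e,-e}(\AA_S)$, and it gives an isomorphism $J\cong\Jac(W_D)$ of graded algebras. Under this isomorphism the bidegree $(2e,-e)$ goes to polynomial degree $e$, and the $\star$-product of Definition~\ref{def:HS} goes to multiplication of polynomials. The first thing to record is that the diagonal is closed under $\star$: we have $(2e,-e)\star(2e',-e')$ landing in bidegree $(2(e+e'),-(e+e'))$. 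So the generated subalgebra $\langle\HS^{2e_1,-e_1},\ldots,\HS^{2e_s,-e_s}\rangle\alg$ lies inside the diagonal, where algebra degree is $e$. By convention it also contains the unit $\id_{\id}\in\HS^{0,0}$.

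The key step uses the hypothesis that $\HS^{2e_i,-e_i}(\AA_S)$ has no nonidentity-sector summand. In the notation of \eqref{eq:BFK} this means the whole space $\HS^{2e_i,-e_i}(\AA_S)$ equals the identity sector in degree $e_i$, that is, $\HS^{2e_i,-e_i}(\AA_S)=J_{e_i}$. Every generator therefore lies in $J$, and the unit lies in $J_0$, since $\id_{\id}$ corresponds to $1\in\Jac(W_D)_0$. Because $J$ is a subalgebra, the generated subalgebra is contained in $J$.

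For the reverse inclusion, transport through $J\cong\Jac(W_D)$. The image of the generated subalgebra is the unital subalgebra of $\Jac(W_D)$ generated by the graded pieces $\Jac(W_D)_{e_i}$, and by hypothesis this is all of $\Jac(W_D)$. Hence the generated subalgebra equals $J$, and so it is isomorphic to $\Jac(W_D)$ as a graded algebra.

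The main point needing care is that the isomorphism in Lemma~\ref{lem:product} really is multiplicative. Concretely, the $\star$-product $S^q(b)[p]\circ a$ on the diagonal must agree with composition of the multiplication maps $\Delta\to\Delta(e)\to\Delta(e+e')$. To check this, I would rewrite $S^{-e}[2e]$ as $\tau^{e}$ using \eqref{eq:Serre}. Then $S^{q}(b)[p]$ becomes the twist $b(e)$, which is again multiplication by the same polynomial, and no signs arise because the cohomological kernel degree $m$ is zero. This is exactly the content already established in Lemma~\ref{lem:product}, so beyond it the argument is purely formal.
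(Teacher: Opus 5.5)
Your argument is correct and follows the paper's proof. Both reduce to Lemma~\ref{lem:product}: the generators lie entirely in the identity sector, so products are ordinary Jacobian products and the comparison is injective in each degree, and the generation hypothesis makes the image all of $\Jac(W_D)$. The paper also remarks that an enhanced equivalence preserves these generating bidegrees and their products, so the generated subalgebra is intrinsic even though the full sector decomposition need not be preserved. That remark is not needed for the isomorphism itself, but it is what lets the lemma feed into the proof of Theorem~\ref{thm:main}.
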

\begin{proof}
By Lemma~\ref{lem:product}, every product in the indicated spaces is its ordinary Jacobian product. Their image is the full Jacobian algebra by the generation hypothesis; injectivity holds in each degree by the additive sector formula. Although an arbitrary equivalence need not preserve the entire sector decomposition, it preserves these generating bidegrees and their products. The generated subalgebra is therefore intrinsic.
\end{proof}

\subsection{Degree three}
Since a cubic surface has index one, its Kuznetsov component is precisely $\AA_S=\OO_S^\perp$. The categorical Torelli theorem in this case was established in \cite[Corollary 1.6]{LZ24}. We give the comparison adapted to our conventions.

Let
\[
S=\{f_3(x_0,x_1,x_2,x_3)=0\}\subset\PP^3,\qquad
S'=\{f'_3(x'_0,x'_1,x'_2,x'_3)=0\}\subset\PP^3
\]
be smooth cubic surfaces, and let $\Phi:\AA_S\xrightarrow{\sim}\AA_{S'}$ be an enhanced equivalence. Set
\[
R_3=\frac{\C[x_0,x_1,x_2,x_3]}{(\partial_{x_0}f_3,\ldots,\partial_{x_3}f_3)},\qquad
R'_3=\frac{\C[x'_0,x'_1,x'_2,x'_3]}{(\partial_{x'_0}f'_3,\ldots,\partial_{x'_3}f'_3)}.
\]
For a finite-dimensional graded algebra $T=\bigoplus_{m\geq0}T_m$, write
\[H_T(z)=\sum_{m\geq0}(\dim_\C T_m)z^m.\]
The four partial derivatives of either cubic form a regular sequence of quadrics. Consequently
\[
H_{R_3}(z)=H_{R'_3}(z)=\left(\frac{1-z^2}{1-z}\right)^4=(1+z)^4,
\]
and the dimensions in degrees zero through four are $(1,4,6,4,1)$.

For either nontrivial $g\in\mu_3$, all four coordinates move, so $r_g=4$, $k_g=-4$, and $W_g=0$. If $s=(3p+4q)/2$, formula \eqref{eq:BFK} gives
\begin{equation}\label{eq:cubicHS}
\HS^{p,q}(\AA_S)\cong
\begin{cases}
(R_3)_s\oplus\C^2,&p\text{ even and }s=2,\\
(R_3)_s,&p\text{ even and }s\neq2,\\
0,&p\text{ odd},
\end{cases}
\end{equation}
where $(R_3)_s=0$ outside $\{0,1,2,3,4\}$. The identical formula holds for $S'$.
\begin{proposition}\label{prop:cubic}
The equivalence $\Phi$ induces an isomorphism $R_3\cong R'_3$ of graded algebras and consequently an isomorphism $S\cong S'$.
\end{proposition}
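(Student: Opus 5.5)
The plan is to apply Lemma~\ref{lem:generate} with the single generating bidegree $(p,q)=(2,-1)$, and then to recover the cubic from its Jacobian algebra by a classical result.

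First, I check the hypothesis of Lemma~\ref{lem:generate} for $e=1$. For $(p,q)=(2,-1)$ we get $s=(6-4)/2=1$. By \eqref{eq:cubicHS}, $\HS^{2,-1}(\AA_S)=(R_3)_1$, which is four-dimensional and has no twisted-sector summand. The twisted summands $\C^2$ occur only at $s=2$, i.e.\ at $e=2$, so they never enter the chosen bidegree. Since $R_3$ is a quotient of the polynomial ring, it is generated by its degree-one piece. Lemma~\ref{lem:generate} therefore gives
\[
\langle\HS^{2,-1}(\AA_S)\rangle\alg\cong R_3,
\]
with $\HS^{2e,-e}$ in algebra degree $e$, and the same holds for $S'$.

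Next, I transport this along $\Phi$. An enhanced equivalence intertwines Serre functors, so by the Morita invariance \cite[Theorem 3.4]{LZ24} it induces a bigraded algebra isomorphism $\HS(\AA_S)\cong\HS(\AA_{S'})$. This isomorphism maps $\HS^{2,-1}$ onto $\HS^{2,-1}$, hence maps the generated subalgebras onto each other, preserving the grading by $e$. This yields a graded isomorphism $R_3\cong R'_3$. One point needs care: Lemma~\ref{lem:product} makes the product of identity-sector classes the Jacobian product, and any twisted classes met in degree two must not spoil this. That is fine, because the generated subalgebra consists only of products of degree-one identity classes, and by Lemma~\ref{lem:product} these lie in the identity sector.

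Finally, I reconstruct the cubic. A graded isomorphism $R_3\cong R'_3$ is induced by a linear map $A:\langle x_i\rangle\to\langle x'_i\rangle$, because both algebras are generated in degree one. This $A$ lifts to a linear change of coordinates that carries the ideal $J_f=(\partial_i f_3)$ onto $J_{f'}$ in degree two, and hence in all degrees. I then invoke the Mather--Yau type graded reconstruction, i.e.\ the theorem of Donagi / Carlson--Griffiths for smooth hypersurfaces of degree $d\geq3$: the Jacobian ideal determines the form up to projective equivalence.

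For cubics I would argue concretely. The socle degree is $4$, and the degree-$3$ part $(J_f)_3$ has codimension $4$ in the $20$-dimensional space $\Sym^3$. The point is to recover $f$ itself from $J_f$, up to coordinate change. I would use the standard fact that for a smooth form $f$ of degree $d\geq3$, the hypersurface is determined by $J_f$. Concretely, after applying $A$, both $f_3$ and $f'_3\circ A$ have the same Jacobian ideal. Hence the form $g_t=f'_3\circ A+t(f_3-f'_3\circ A)$, or more directly the Euler relation $f=\tfrac13\sum x_i\partial_i f\in (J_f)_3$, together with the Mather--Yau argument applied to the family, shows that $f'_3\circ A$ lies in the $\mathrm{GL}_4$-orbit of $f_3$. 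This yields $S\cong S'$.

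The main obstacle is exactly this last step: passing from equality of Jacobian ideals to projective equivalence of the cubics. For this I would cite the graded Mather--Yau/Donagi theorem rather than reprove it. That theorem states that the forms with a fixed Jacobian ideal form a single orbit of the subgroup of $\mathrm{GL}_4$ preserving the ideal; the proof goes through the connectedness of the open set of smooth forms in $(J_f)_3$ and the tangent-space computation at $f$.
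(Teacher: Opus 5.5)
Your proposal is correct and follows the paper's proof. You identify $\HS^{2,-1}(\AA_S)=(R_3)_1$ with no twisted sector, and use Lemma~\ref{lem:product} to keep all products in the identity sector despite the two twisted classes at $s=2$. Then $\Phi_{\HS}$ gives a graded isomorphism $R_3\cong R'_3$ of the generated subalgebras, and the cubics are identified by the graded Mather--Yau theorem, which the paper cites as \cite[Theorem 3.2 and Corollary 3.3]{Ahm10} in place of your Carlson--Griffiths/Donagi reference.

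One small correction to your sketch of that cited result. The connected set on which the stabilizer of $J_f$ acts transitively is $\{h : J_h=J_f\}$, which is open in the linear space $\{h : \partial_k h\in J_f \text{ for all } k\}$. It is not the set of all smooth forms in $(J_f)_3$: for the Fermat cubic, $h=\sum x_i^3+\epsilon x_0^2x_1$ is smooth and lies in $(J_f)_3$, but it has a different Jacobian ideal.
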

\begin{proof}
Conjugation by $\Phi$ gives an isomorphism of bigraded algebras
\[\Phi_{\HS}:\HS(\AA_S)\xrightarrow{\sim}\HS(\AA_{S'}).\]
The identity-sector degree in \eqref{eq:cubicHS} is $-q+\tfrac32(2q+p)=s$. A nontrivial sector has degree $s-2$ in $\Jac(0)=\C$, so it occurs exactly when $s=2$. At $(p,q)=(2,-1)$ we have $s=1$, and hence
\[
V:=\HS^{2,-1}(\AA_S)=(R_3)_1,\qquad
V':=\HS^{2,-1}(\AA_{S'})=(R'_3)_1.
\]
The restriction $\Phi_{2,-1}:V\to V'$ is an isomorphism. For the multiplication maps
\[
\mu_2:\Sym^2V\longrightarrow\HS^{4,-2}(\AA_S),\qquad
\mu'_2:\Sym^2V'\longrightarrow\HS^{4,-2}(\AA_{S'}),
\]
multiplicativity gives
\[
\Phi_{4,-2}\circ\mu_2=\mu'_2\circ\Sym^2(\Phi_{2,-1}),\qquad
\Sym^2(\Phi_{2,-1})(\Ker\mu_2)=\Ker\mu'_2.
\]
By Lemma~\ref{lem:product}, the products lie in the identity sector, even though the target also has two twisted-sector classes. Therefore
\[
\Ker\mu_2=\langle\partial_{x_0}f_3,\ldots,\partial_{x_3}f_3\rangle,\qquad
\Ker\mu'_2=\langle\partial_{x'_0}f'_3,\ldots,\partial_{x'_3}f'_3\rangle
\]
inside the respective quadratic polynomial spaces. Applying the same comparison in every degree and using generation in degree one gives
\[
R_3\cong\langle\HS^{2,-1}(\AA_S)\rangle\alg
\xrightarrow[\Phi_{\HS}]{\sim}
\langle\HS^{2,-1}(\AA_{S'})\rangle\alg\cong R'_3.
\]
By \cite[Theorem 3.2 and Corollary 3.3]{Ahm10}, the graded Milnor algebra identifies homogeneous equations of the same degree up to a linear change of coordinates. Allowing a nonzero scalar does not change the projective zero locus. Thus $S\cong S'$.
\end{proof}

\subsection{Degree two}
Completing the square gives
\[
\begin{gathered}
S_2=\{W=y^2-f_4(x_0,x_1,x_2)=0\}\subset\PP(1,1,1,2),\\
R_2=\Jac(W)\cong\frac{\C[x_0,x_1,x_2]}{(\partial f_4)}.
\end{gathered}
\]
Its Hilbert series is $(1+z+z^2)^3$, with Hilbert function $(1,3,6,7,6,3,1)$. The sector data are
\begin{center}
\begin{tabular}{ccccc}
\toprule
$g$&$I_g$&$r_g$&$k_g$&$W_g$\\\midrule
1&$\varnothing$&0&0&$W$\\
$-1$&$\{x_0,x_1,x_2\}$&3&$-3$&$y^2$\\
$i,-i$&$\{x_0,x_1,x_2,y\}$&4&$-5$&0\\\bottomrule
\end{tabular}
\end{center}
\begin{proposition}\label{prop:degree2}
For $s=2p+3q$,
\[
\HS^{p,q}(\AA_S)\cong
\begin{cases}
(R_2)_s\oplus\C^2,&p\text{ even and }s=3,\\
(R_2)_s,&p\text{ even and }s\neq3,\\
\C,&p\text{ odd and }s=3,\\
0,&p\text{ odd and }s\neq3.
\end{cases}
\]
Consequently $\langle\HS^{2,-1}(\AA_S)\rangle\alg\cong R_2$.
\end{proposition}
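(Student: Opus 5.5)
The plan is to evaluate the sector formula \eqref{eq:BFK} directly, using the table of sector data above, and then to deduce the generation statement from Lemma~\ref{lem:generate} with the single bidegree $(2,-1)$.

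\emph{Setting up the sectors.} For $\HS^{p,q}$ we substitute $D=4$, $t=-q$ and $m=2q+p$; note that $m\equiv p\pmod 2$. For each $g\in\mu_4$ the relevant Jacobian degree is $t-k_g+2(m-r_g)$.

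\emph{Identity sector.} For $g=1$ we have $r_g=0$ and $k_g=0$. The sector contributes only when $p$ is even, and it sits in degree
\[
-q+2(2q+p)=2p+3q=s.
\]
It therefore contributes $\Jac(W)_s$.

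To identify $\Jac(W)$ with $R_2$, use $\partial_yW=2y$. This kills $y$ in $\Jac(W)$, so
\[
\Jac(W)=\C[x_0,x_1,x_2,y]/(2y,\partial_{x_i}f_4)\cong R_2.
\]
The grading is compatible, because $y$ (of degree two) disappears and the $x_i$ keep degree one. The Hilbert series then follows from the fact that the three partials of the smooth quartic $f_4$ form a regular sequence of cubics.

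\emph{Twisted sector $g=-1$.} The element $-1$ moves exactly $x_0,x_1,x_2$, so its fixed locus is the $y$-axis and $W_g=y^2$. Then $\Jac(y^2)=\C[y]/(y)=\C$, concentrated in degree $0$. Here $r_g=3$, so the sector contributes only for $p$ odd. Its degree is
\[
-q+3+2(2q+p-3)=s-3,
\]
so it gives $\C$ exactly when $s=3$.

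\emph{Twisted sectors $g=\pm i$.} All four coordinates move, so $W_g=0$ and $\Jac(0)=\C$, again in degree $0$. Here $r_g=4$ and $k_g=-5$, so these sectors contribute only for $p$ even. Their degree is
\[
-q+5+2(2q+p-4)=s-3.
\]
Together they give $\C^2$ exactly when $s=3$.

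Assembling the four sectors yields the four cases of the displayed formula.

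\emph{Generation.} At $(p,q)=(2,-1)$ we get $s=1\neq3$, so
\[
\HS^{2,-1}(\AA_S)=(R_2)_1=\langle x_0,x_1,x_2\rangle,
\]
with no twisted summand. Since $y=0$ in $R_2$, the ring $R_2$ is generated by its degree-one part. Lemma~\ref{lem:generate} applied with $e_1=1$ then gives $\langle\HS^{2,-1}(\AA_S)\rangle\alg\cong R_2$.

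The generation hypothesis concerns only the generating bidegree, so the twisted classes in other bidegrees are harmless. For example, at $(6,-3)$ we have $s=3$ and two twisted classes appear. However, products of identity-sector classes remain identity-sector classes by Lemma~\ref{lem:product}, so these extra classes are never reached by the generated subalgebra.

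\emph{Main obstacle.} I expect the delicate point to be the bookkeeping of the twisted sectors. One has to read off the fixed loci and $W_g$ correctly, in particular $W_{-1}=y^2$ rather than $0$. One also has to check the parity conventions $r_g\equiv m\pmod 2$ in the normalization of \cite{LRZ24}, since an off-by-one error there would misplace the odd-$p$ class.
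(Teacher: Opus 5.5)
Your proposal is correct and follows the same route as the paper. You evaluate the sector formula \eqref{eq:BFK} sector by sector: the identity sector sits in degree $s$, the $(-1)$-sector has odd parity in $\Jac(y^2)=\C$, and the two primitive fourth-root sectors have even parity in $\Jac(0)=\C$, both at degree $s-3$. You then note that $(2,-1)$ has $s=1$, so only $(R_2)_1$ appears there, and apply Lemma~\ref{lem:generate}; your write-up just makes the paper's short argument more explicit.
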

\begin{proof}
The identity degree is $-q+2(2q+p)=s$. The $(-1)$-sector has odd parity and degree $s-3$ in $\Jac(y^2)=\C$. The primitive fourth-root sectors have even parity and the same shifted degree in $\Jac(0)=\C$. At $(p,q)=(2,-1)$ the degree is one, so no twisted sector contributes and $\HS^{2,-1}=(R_2)_1$. The ring is generated by this piece, and Lemma~\ref{lem:generate} applies.
\end{proof}
By \cite[Theorem 3.2 and Corollary 3.3]{Ahm10}, the graded Milnor algebra reconstructs the smooth plane quartic $\{f_4=0\}$. The del Pezzo surface is the double plane branched over that quartic, using the unique square root $\OO_{\PP^2}(2)$ of its divisor line bundle. Thus it is reconstructed as well.

\subsection{Degree one and the missing weight-two generator}
Let $x_0,x_1,y,z$ have weights $1,1,2,3$. A smooth degree-one del Pezzo surface has the normal form
\begin{equation}\label{eq:sextic}
S_1=\{W=z^2+y^3+yf_4(x_0,x_1)+f_6(x_0,x_1)=0\}
\subset\PP(1,1,2,3).
\end{equation}
Put $G_6=y^3+yf_4+f_6$. Then
\[
R_1=\Jac(W)\cong
\frac{\C[x_0,x_1,y]}{(\partial_{x_0}G_6,\partial_{x_1}G_6,\partial_yG_6)}.
\]
The partial derivatives form a regular sequence of weights $5,5,4$. Hence
\[
H_{R_1}(z)=\frac{(1-z^5)^2(1-z^4)}{(1-z)^2(1-z^2)},
\]
with Hilbert function $(1,2,4,6,8,8,8,6,4,2,1)$. For a primitive sixth root $\zeta$, the nontrivial sectors are
\begin{center}
\begin{tabular}{ccccc}
\toprule
$g$&fixed coordinates&$r_g$&$k_g$&$W_g$\\\midrule
$-1$&$y$&3&$-5$&$y^3$\\
$\zeta^2,\zeta^4$&$z$&3&$-4$&$z^2$\\
$\zeta,\zeta^5$&origin&4&$-7$&0\\\bottomrule
\end{tabular}
\end{center}
Here $\Jac(y^3)=\C[y]/(y^2)$ with $\deg y=2$.
\begin{proposition}\label{prop:degree1}
Put $s=3p+5q$. Then
\[
\HS^{p,q}(\AA_S)\cong
\begin{cases}
(R_1)_s\oplus\C^2,&p\text{ even and }s=5,\\
(R_1)_s,&p\text{ even and }s\neq5,\\
\C,&p\text{ odd and }s\in\{4,6\},\\
\C^2,&p\text{ odd and }s=5,\\
0,&\text{otherwise}.
\end{cases}
\]
Moreover,
\begin{equation}\label{eq:genweight2}
\langle\HS^{2,-1}(\AA_S),\HS^{4,-2}(\AA_S)\rangle\alg\cong R_1.
\end{equation}
\end{proposition}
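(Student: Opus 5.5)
We follow the pattern of Propositions~\ref{prop:cubic} and~\ref{prop:degree2}: first evaluate \eqref{eq:BFK} sector by sector, then apply Lemma~\ref{lem:generate}. Here $D=6$, and by \eqref{eq:Serre} we have $t=-q$ and $m=2q+p$. The identity sector therefore contributes $(R_1)_{t+3m}=(R_1)_{-q+6q+3p}=(R_1)_{3p+5q}=(R_1)_s$, and only when $m$ is even, i.e.\ when $p$ is even.

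For each row of the sector table we compute the shifted degree $t-k_g+3(m-r_g)$.
\begin{itemize}
\item For $g=-1$ we have $r_g=3$ and $k_g=-5$. The degree is $-q+5+3(2q+p-3)=s-4$, the parity of $m$ must be odd, and the contribution is $\Jac(y^3)_{s-4}$. This space is $\C$ for $s-4\in\{0,2\}$, spanned by $1$ and $y$.
\item For $g=\zeta^{2},\zeta^4$ we have $r_g=3$ and $k_g=-4$. The degree is $s-5$ and the contribution is $\Jac(z^2)_{s-5}=\C$, so each of these two elements gives a copy of $\C$ exactly when $s=5$ and $p$ is odd.
\item For $g=\zeta,\zeta^5$ we have $r_g=4$ and $k_g=-7$. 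The degree is $-q+7+3(2q+p-4)=s-5$ with even parity, giving $\C$ when $s=5$, once for each of the two elements.
\end{itemize}
Summing the contributions gives the displayed table. For odd $p$:
\begin{itemize}
\item $s=4$ and $s=6$ each receive one class from the $(-1)$-sector;
\item $s=5$ receives two classes from the cube-root sectors.
\end{itemize}
For even $p$, the extra $\C^2$ at $s=5$ comes from the primitive sixth-root sectors.

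For \eqref{eq:genweight2}, note the following two pieces.
\begin{itemize}
\item At $(p,q)=(2,-1)$ we have $s=1$, and at $(4,-2)$ we have $s=2$.
\item Both have $p$ even and $s\neq5$. Hence $\HS^{2,-1}=(R_1)_1=\langle x_0,x_1\rangle$ and $\HS^{4,-2}=(R_1)_2=\langle x_0^2,x_0x_1,x_1^2,y\rangle$, with no twisted classes.
\end{itemize}
The ring $R_1$ is a quotient of $\C[x_0,x_1,y]$, where the variable $z$ has already been eliminated since $\partial_zW=2z$. It is therefore generated as an algebra by its degree-one and degree-two pieces, because $y$ lies in degree two. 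Lemma~\ref{lem:generate} with $(e_1,e_2)=(1,2)$ then gives the isomorphism.

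We must also note why $\HS^{2,-1}$ alone does not suffice. Lemma~\ref{lem:product} shows that its products are Jacobian products, so it generates only the image of $\C[x_0,x_1]$. In degree two that image is $3$-dimensional, whereas $(R_1)_2$ is $4$-dimensional, so $y$ is missed. This is exactly why the second bidegree is needed.

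The main obstacle is bookkeeping: getting each sector's shift and parity right, in particular the sign conventions of $k_g$ and the $D(m-r_g)/2$ term. The essential point is that neither $(p,q)=(2,-1)$ nor $(4,-2)$ meets a twisted sector, and the computation above confirms this, since all twisted contributions are concentrated at $s\in\{4,5,6\}$.
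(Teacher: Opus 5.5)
Your proposal is correct and takes the same route as the paper. You evaluate \eqref{eq:BFK} sector by sector, with shifts $-4,-5,-5$ and the parities you record, and then apply Lemma~\ref{lem:generate} at the untwisted bidegrees $(2,-1)$ and $(4,-2)$. Your observation that $\Sym^2\HS^{2,-1}$ fills only a three-dimensional subspace of $(R_1)_2$ is the paper's point that the one-dimensional quotient detects $y$.
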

\begin{proof}
The identity degree is $-q+3(2q+p)=s$. In the three rows of the table, the shift of this degree is respectively $-4,-5,-5$. The parity conditions and the supports of $\Jac(y^3)$, $\Jac(z^2)$, and $\Jac(0)$ give the formula. At the two relevant bidegrees we obtain
\[
\HS^{2,-1}=(R_1)_1,\qquad \HS^{4,-2}=(R_1)_2,
\]
with no twisted contribution. In coordinates, $(R_1)_1=\langle x_0,x_1\rangle$ and $(R_1)_2=\Sym^2(R_1)_1\oplus\C y$. Although this splitting depends on coordinates, the one-dimensional quotient
\[
\frac{\HS^{4,-2}}{\Image(\Sym^2\HS^{2,-1}\longrightarrow\HS^{4,-2})}
\]
is intrinsic. It detects the missing weight-two generator. The two pieces generate $R_1$, so Lemma~\ref{lem:generate} proves \eqref{eq:genweight2}.
\end{proof}
The normalized weighted Euler identity is
\[
6G_6=x_0\partial_{x_0}G_6+x_1\partial_{x_1}G_6+2y\partial_yG_6.
\]
Thus the Milnor and Tjurina algebras agree. More precisely, the graded isomorphism obtained above is between Milnor algebras of sextics with the fixed weights $(1,1,2)$. By \cite[Theorem 2 and Corollary 3]{Ahm12}, it gives a weight-preserving polynomial change of coordinates identifying the two equations. In this case an isomorphism on the minimal generators has the form
\[
(x_0,x_1)\longmapsto A(x_0,x_1),\qquad y\longmapsto c y+q_2(x_0,x_1),
\]
with $A\in\operatorname{GL}_2(\C)$ and $c\neq0$, so it lifts to a graded polynomial automorphism. Adjoining the variable $z$ and rescaling it if necessary identifies the double covers in \eqref{eq:sextic}. The second bidegree in \eqref{eq:genweight2} is necessary: products of the degree-one piece do not generate the class of $y$.

\subsection{Degree four and the orbifold canonical subalgebra}
Let $S_4=Q_0\cap Q_1\subset\PP^4$. Its pencil has five distinct singular members, defining a reduced divisor $D=\{\lambda_1,\ldots,\lambda_5\}\subset\PP^1$. The quadric-fibration and order--stack descriptions give
\begin{equation}\label{eq:quarticcat}
\AA_S\simeq\Db(\coh\mathcal X_D),\qquad
\mathcal X_D=\rootstack{\PP^1}{D};
\end{equation}
see \cite{Kuz08,CI04} and \cite[Section 2.4]{Ela26}. Write $\mathcal X=\mathcal X_D$, $B=\PP^1$, and let $\pi:\mathcal X\to B$ be the coarse map. Denote by $D_i$ the reduced stacky divisor over $\lambda_i$, so that $2D_i=\pi^*(\lambda_i)$. We first justify the Serre functor and the multiplicative comparison needed for reconstruction.

\begin{lemma}\label{lem:quarticSerre}
By \cite[Section 1.2 and Serre duality 2.2]{GL87}, the canonical bundle and the Serre functor of $\Db(\coh\mathcal X)$ are
\[
\omega_{\mathcal X}\cong\pi^*\omega_B\otimes
\OO_{\mathcal X}\Bigl(\sum_{i=1}^5D_i\Bigr),\qquad
S_{\mathcal X}(E)\cong E\otimes\omega_{\mathcal X}[1].
\]
For every line bundle $L$ on $\mathcal X$, multiplication by a section induces an isomorphism
\[
H^0(\mathcal X,L)\xrightarrow{\sim}
\Hom_{\Fun(\Perf(\mathcal X),\Perf(\mathcal X))}
(\id,-\otimes L).
\]
These isomorphisms are compatible with tensor products of line bundles and composition of natural transformations. Consequently, \eqref{eq:quarticcat} induces an isomorphism of graded algebras
\begin{equation}\label{eq:canonicalring}
R_4:=\bigoplus_{p\geq0}\HS^{-p,p}(\AA_S)
\cong\bigoplus_{p\geq0}H^0(\mathcal X_D,\omega_{\mathcal X_D}^{p}).
\end{equation}
Here an element of $H^0(\mathcal X,\omega_{\mathcal X}^p)$ has degree $p$ and corresponds to the Hochschild--Serre bidegree $(-p,p)$.
\end{lemma}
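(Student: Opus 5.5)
The plan is to prove the three assertions of Lemma~\ref{lem:quarticSerre} in order and then assemble \eqref{eq:canonicalring}.

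\emph{Canonical bundle and Serre functor.} Since $\mathcal X$ is a smooth proper one-dimensional Deligne--Mumford stack with trivial generic stabilizer, Grothendieck--Serre duality on $\mathcal X$ gives $S_{\mathcal X}=-\otimes\omega_{\mathcal X}[1]$. The root-stack ramification formula shows that $\pi$ is ramified of index two exactly along each $D_i$. Hence
\[
\omega_{\mathcal X}\cong\pi^*\omega_B\otimes\OO_{\mathcal X}\Bigl(\textstyle\sum_iD_i\Bigr).
\]
In Geigle--Lenzing notation this is $\vec\omega=3\vec c-\sum_i\vec x_i$ for $r=5$, which is the input cited from \cite{GL87}.

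\emph{Natural transformations to a line-bundle twist.} Under To\"en's theorem, $\Fun(\Perf(\mathcal X),\Perf(\mathcal X))\simeq\Perf(\mathcal X\times\mathcal X)$. This holds because $\mathcal X$ is a tame quotient stack with the resolution property, so it is perfect in the sense of Ben-Zvi--Francis--Nadler. Under this equivalence, $\id$ corresponds to $\Delta_*\OO_{\mathcal X}$ and $-\otimes L$ corresponds to $\Delta_*L$. Therefore
\[
\Hom(\id,-\otimes L)\cong\Hom_{\mathcal X\times\mathcal X}(\Delta_*\OO,\Delta_*L)\cong\Hom_{\mathcal X}(\LL\Delta^*\Delta_*\OO,L).
\]
The next step is to show that $\mathcal H^0(\LL\Delta^*\Delta_*\OO)=\OO_{\mathcal X}$ and that the higher Tor sheaves are concentrated in degrees $\le -1$. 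Here $\Delta$ is representable but not a closed immersion at the stacky points; it is finite and unramified, and I expect to need exactly this point, with care. Granting it, the truncation spectral sequence gives $\Hom^0=H^0(L)\oplus(\text{contributions from }\Ext^{-k}$ of the higher Tor terms$)$, and negative Exts between sheaves vanish. So the degree-zero Hom is exactly $H^0(\mathcal X,L)$, realized by multiplication by sections. This is the main obstacle.

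If the diagonal computation on the stack proves awkward, I would use an alternative route. Pass through the equivalence \eqref{eq:GL} to $\Db(\modu\text{-}\Lambda)$ and compute $\HH^0$ with coefficients in the twisting bimodule via the tilting object. Alternatively, work \'etale locally over $B$ with the $\mu_2$-equivariant description: on $[\Spec\C[[u]]/\mu_2]$ the diagonal is $\mu_2$-equivariant $\Delta_*$ on $\Spec\C[[u]]\times\mu_2$, whose degree-zero part is visibly the invariant sections.

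\emph{Multiplicativity.} Compatibility with tensor products and composition follows because the identification sends $s\in H^0(L)$ to the transformation $E\mapsto(E\xrightarrow{\id\otimes s}E\otimes L)$. For $t\in H^0(L')$, the twisted composite $(-\otimes L)(t)\circ s$ is then multiplication by $s\otimes t$. The symmetric monoidal structure of $\otimes$ removes ordering issues, and since all maps sit in cohomological degree zero, no Koszul sign arises.

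\emph{Assembly.} Apply this with $L=\omega^p$. The Serre functor gives $S^p[-p]=-\otimes\omega^p$, so $\HS^{-p,p}=\Hom(\id,-\otimes\omega^p)$. The product $\star$ of Definition~\ref{def:HS} matches tensor multiplication of sections, and the enhanced equivalence \eqref{eq:quarticcat} transports $\HS$ by \cite[Theorem 3.4]{LZ24}. Together these give \eqref{eq:canonicalring}.
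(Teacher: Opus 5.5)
Your outline matches the paper's: the integral-kernel comparison reduces you to $\Hom(\Delta_*\OO_{\mathcal X},\Delta_*L)$, you compute this locally at the stacky points, and multiplicativity follows from $\eta_t\star\eta_s=\eta_{st}$. However, the step you single out as the main obstacle is false as stated, so no amount of ``care'' will prove it.

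Since $\Delta$ is affine, $\mathcal H^0(\LL\Delta^*\Delta_*\OO_{\mathcal X})=\Delta^*\Delta_*\OO_{\mathcal X}$ is the pushforward of the structure sheaf of $\mathcal X\times_{\mathcal X\times\mathcal X}\mathcal X$. This is the inertia stack, which here is $\mathcal X\sqcup\bigsqcup_{i=1}^5 B\mu_2$. Concretely, in the chart $[\Spec\C[[z]]/\mu_2]$ the kernel $\Delta_*\OO_{\mathcal X}$ pulls back to $\OO_{\Gamma_+}\oplus\OO_{\Gamma_-}$, with $\Gamma_\pm=\{z_2=\pm z_1\}$. Restricting the resolution $\OO\xrightarrow{z_2+z_1}\OO$ of $\OO_{\Gamma_-}$ to the diagonal gives $\C[[z]]\xrightarrow{2z}\C[[z]]$. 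So the twisted sector contributes the skyscraper $\C[[z]]/(2z)$ in degree $0$. Hence $\mathcal H^0(\LL\Delta^*\Delta_*\OO_{\mathcal X})\cong\OO_{\mathcal X}\oplus T$, where $T\neq0$ is a torsion sheaf supported at the five stacky points, and not $\OO_{\mathcal X}$ as you claim.

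The missing observation is the one that rescues your conclusion anyway. Because $\LL\Delta^*\Delta_*\OO_{\mathcal X}$ lies in $D^{\leq0}$, you get $\Hom(\LL\Delta^*\Delta_*\OO_{\mathcal X},L)\cong\Hom(\OO_{\mathcal X}\oplus T,L)=H^0(\mathcal X,L)\oplus\Hom(T,L)$. Moreover $\Hom(T,L)=0$ because $L$ is locally free. This vanishing is exactly the paper's local computation that no morphisms connect the $\Gamma_+$ and $\Gamma_-$ summands ($-2z_1a=0$ forces $a=0$). It genuinely uses that the stabilizer acts effectively: on a gerbe the twisted sector would be all of $\mathcal X$ and would contribute extra degree-zero transformations.

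You should also record that the $\OO_{\mathcal X}$ summand is split off by the counit. Then every class is adjoint to some $\Delta_*(s)$, i.e.\ equals $\eta_s=\id\otimes s$. Your multiplicativity paragraph presupposes exactly this description of all elements of $\HS^{-p,p}$, so it needs this step.

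With that repair, your remaining steps go through as written: the ramification formula for $\omega_{\mathcal X}$, $S^p[-p]\cong-\otimes\omega_{\mathcal X}^p$, sign-free composition in degree zero, and Morita transport of $\HS$.
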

\begin{proof}
We prove the assertions about natural transformations and their products. Let $\Delta:\mathcal X\to\mathcal X\times\mathcal X$ be the diagonal. The standard quotient presentation of a weighted projective line makes $\mathcal X$ a perfect stack; see \cite[Section 1]{GL87} and \cite[Corollary 3.22]{BFN10}. The integral-kernel comparison, restricted to the functors preserving perfect complexes, identifies
\[
\Hom_{\Fun(\Perf(\mathcal X),\Perf(\mathcal X))}(\id,-\otimes L)
\cong
\Hom_{\Db(\coh(\mathcal X\times\mathcal X))}
(\Delta_*\OO_{\mathcal X},\Delta_*L);
\]
see \cite[Corollary 4.10 and Remark 4.11]{BFN10}. The two kernels on the right are coherent sheaves in degree zero. We may therefore compute their degree-zero derived Hom by ordinary sheaf morphisms.

It remains to check this sheaf Hom at the stacky points. Use the completed chart $[\Spec A/\mu_2]$, where $A=\C[[z]]$, the coarse coordinate is $t=z^2$, and the stabilizer acts by $z\mapsto-z$. On the product of the local covers the pullback of the diagonal kernel is the direct sum of the structure sheaves of the two graphs
\[
\Gamma_+=\{z_2=z_1\},\qquad \Gamma_-=\{z_2=-z_1\}.
\]
The target kernel has the corresponding line-bundle twists on these graphs. A morphism from the $\Gamma_+$ summand to the $\Gamma_-$ summand is, after trivializing that line bundle, determined by an element $a\in\C[[z_1]]$ satisfying
\[
(z_2-z_1)a\big|_{\Gamma_-}=-2z_1a=0.
\]
Hence $a=0$, and the same argument applies with the two graphs interchanged. The remaining morphisms are multiplication by sections on the individual graphs. The $\mu_2\times\mu_2$-equivariance permutes the graphs and identifies these sections; the stabilizer of the identity graph is the diagonal $\mu_2$, so descent leaves precisely the invariant sections of $L$. At an ordinary point the usual diagonal calculation gives the same conclusion. Thus, globally,
\[
\Hom_{\Db(\coh(\mathcal X\times\mathcal X))}
(\Delta_*\OO_{\mathcal X},\Delta_*L)
\cong H^0(\mathcal X,L).
\]
The resulting natural transformation associated with $s\in H^0(\mathcal X,L)$ is exactly $\eta_s(E)=\id_E\otimes s$. The local vanishing just proved uses the effective stabilizer action on the coordinate; without it an arbitrary stack may have additional degree-zero transformations.

Finally, choose the coherent identifications
\[
S_{\mathcal X}^p[-p]\cong-\otimes\omega_{\mathcal X}^p.
\]
For $s\in H^0(\mathcal X,\omega_{\mathcal X}^p)$ and $t\in H^0(\mathcal X,\omega_{\mathcal X}^q)$, the product of Definition~\ref{def:HS} becomes
\[
\eta_t\star\eta_s
=S_{\mathcal X}^p(\eta_t)[-p]\circ\eta_s
=\eta_{st}.
\]
After cancellation of the shifts these are degree-zero multiplication maps, so this is ordinary section multiplication. This proves the algebra isomorphism \eqref{eq:canonicalring}, rather than only an identification of its graded pieces. Transport through \eqref{eq:quarticcat} is compatible with multiplication by the Morita invariance of the Hochschild--Serre algebra.
\end{proof}

We now compute this section ring. The same local chart gives
\[
\pi_*\OO_{\mathcal X}\Bigl(p\sum_iD_i\Bigr)
\cong\OO_B\bigl(\lfloor p/2\rfloor D\bigr).
\]
Indeed, the local module is $A z^{-p}$; its invariant generator is $t^{-m}$ for both $p=2m$ and $p=2m+1$. The canonical-bundle formula and the projection formula therefore give
\begin{equation}\label{eq:pushomega}
\pi_*\omega_{\mathcal X}^p
\cong\omega_B^p\otimes\OO_B\bigl(\lfloor p/2\rfloor D\bigr).
\end{equation}
Put $A_B=\omega_B^2\otimes\OO_B(D)$. Then $\deg A_B=1$, $\omega_{\mathcal X}^2\cong\pi^*A_B$, and
\[
\pi_*\omega_{\mathcal X}^{2m}\cong A_B^m,\qquad
\pi_*\omega_{\mathcal X}^{2m+1}\cong\omega_B\otimes A_B^m.
\]
In particular,
\[
\dim(R_4)_{2m}=m+1,\qquad
\dim(R_4)_{2m+1}=\max\{m-1,0\}\qquad(m\geq0).
\]
The ring in \eqref{eq:canonicalring} is the orbifold canonical section ring of $\mathcal X_D$, graded by the tensor power of $\omega_{\mathcal X_D}$. We describe its weighted homogeneous coordinates as follows.

\begin{proposition}\label{prop:quarticring}
There are generators $u,v,w$ of degrees $2,2,5$, respectively, such that
\begin{equation}\label{eq:quarticring}
R_4\cong\frac{\C[u,v,w]}{(w^2-\ell_1(u,v)\cdots\ell_5(u,v))},
\end{equation}
where $\ell_i=0$ is the point $\lambda_i$.
\end{proposition}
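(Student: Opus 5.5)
We work on the coarse line $B=\PP^1$ with homogeneous coordinates $(s_0:s_1)$, so that $\omega_B\cong\OO_B(-2)$ and $A_B\cong\OO_B(1)$. By \eqref{eq:pushomega} and the formulas following it,
\[
(R_4)_{2m}\cong H^0(B,\OO_B(m)),\qquad (R_4)_{2m+1}\cong H^0(B,\OO_B(m-2)).
\]
Fix the isomorphism $A_B\cong\OO_B(1)$ and let $u,v\in(R_4)_2\cong H^0(\OO_B(1))$ correspond to $s_0,s_1$. Since $\omega^2_{\mathcal X}\cong\pi^*A_B$, multiplication in even degrees is multiplication of sections on $B$. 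The even subring $\bigoplus_m(R_4)_{2m}$ is therefore $\C[u,v]$, graded with $\deg u=\deg v=2$.

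For the odd part, write $\sigma\in H^0(\mathcal X,\OO_{\mathcal X}(\sum_iD_i))$ for the canonical section vanishing exactly on the stacky divisors. Then $\sigma^2=\pi^*(\ell_1\cdots\ell_5)$ as a section of $\pi^*\OO_B(D)=\pi^*\OO_B(5)$, where $\ell_i$ is the linear form vanishing at $\lambda_i$. The local chart confirms this: $\sigma$ is $z$ at each stacky point and $z^2=t$.

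By the canonical-bundle formula, $\omega_{\mathcal X}^5\cong\pi^*(\omega_B^5\otimes\OO_B(2D))\otimes\OO_{\mathcal X}(\sum_iD_i)$, and $\omega_B^5\otimes\OO_B(2D)\cong\OO_B(-10+10)=\OO_B$. We obtain a canonical element $w:=\sigma\in(R_4)_5$, which is one-dimensional as the dimension count predicts. More generally, every odd-degree section of $\omega_{\mathcal X}^{2m+1}$ vanishes on all $D_i$, because its invariant local generator is $t^{-m}$ times the odd generator. Dividing by $\sigma$ therefore identifies $(R_4)_{2m+1}$ with $\sigma\cdot\pi^*H^0(B,\OO_B(m-2))$. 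This gives $(R_4)_{\mathrm{odd}}=w\cdot\C[u,v]$.

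Combining the even and odd descriptions, $R_4=\C[u,v]\oplus w\,\C[u,v]$. The relation comes from
\[
w^2=\sigma^2=\ell_1(u,v)\cdots\ell_5(u,v),
\]
once one normalizes the trivialization $\omega_B^{10}\otimes\OO_B(4D)\cong\OO_B(-20+20)$ so that it is compatible with the chosen identification $A_B\cong\OO_B(1)$. Any scalar discrepancy is absorbed by rescaling $w$.

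The surjection $\C[u,v,w]/(w^2-\prod\ell_i)\to R_4$ is an isomorphism by a Hilbert series comparison. The left side is free over $\C[u,v]$ with basis $1,w$, and its dimensions are $m+1$ in degree $2m$ and $m-1$ in degree $2m+1$ for $m\geq2$, which match the dimension formulas above.

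I expect the main obstacle to be the precise identification of line bundles: checking that $\omega_{\mathcal X}^5$ is exactly $\OO_{\mathcal X}(\sum_iD_i)$ up to pullback of a degree-zero bundle, and that the odd sections are divisible by $\sigma$ rather than merely matching in dimension. I would handle both through the local chart $[\Spec\C[[z]]/\mu_2]$, using the invariant-generator description already established in the text after Lemma~\ref{lem:quarticSerre}.
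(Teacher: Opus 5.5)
Your proposal is correct and follows essentially the paper's route. You get the even part $\C[u,v]$ from $\omega_{\mathcal X}^2\cong\pi^*A_B$, take $w$ spanning the one-dimensional $(R_4)_5$ via the trivial bundle $\omega_B^5\otimes\OO_B(2D)=\omega_B\otimes A_B^2$, show the odd part is $w\,\C[u,v]$, and conclude by freeness over $\C[u,v]$ with basis $1,w$. The one small difference is how you obtain $w^2$: you use the global identity $\sigma^2=\pi^*s_D$ coming from $2\sum_iD_i=\pi^*D$, whereas the paper writes $w=h(t)z(dz)^5$ in the local chart and uses that a section of $A_B^5\cong\OO_B(5)$ with simple zeros at the five $\lambda_i$ equals $c\prod_i\ell_i$.
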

\begin{proof}
Choose a basis $u,v$ of $H^0(B,A_B)$. Since $A_B\cong\OO_{\PP^1}(1)$, the even subalgebra is $\C[u,v]$, with $\deg u=\deg v=2$. Moreover, $M:=\omega_B\otimes A_B^2\cong\OO_B$. A nonzero section of $M$ defines $w\in(R_4)_5$, and \eqref{eq:pushomega} gives
\[
(R_4)_{5+2k}=wH^0(B,A_B^k)\quad(k\geq0),\qquad
(R_4)_1=(R_4)_3=0.
\]
Thus $R_4=\C[u,v]\oplus w\C[u,v]$ as a graded module over its even subalgebra.

To determine $w^2$, use the local chart $t=z^2$ at $\lambda_i$. The invariant generators of $\pi_*\omega_{\mathcal X}^5$ and $\pi_*\omega_{\mathcal X}^{10}$ are $z(dz)^5$ and $(dz)^{10}$, respectively. Since the chosen section of $M$ is nowhere vanishing, locally $w=h(t)z(dz)^5$ for a unit $h(t)$, and hence
\[
w^2=h(t)^2t(dz)^{10}.
\]
Therefore $w^2\in H^0(B,A_B^5)$ has a simple zero at each $\lambda_i$. As $\deg A_B^5=5$, its zero divisor is exactly $D$, so
\[
w^2=c\,\ell_1(u,v)\cdots\ell_5(u,v),\qquad c\in\C^\times.
\]
Rescale $w$ to absorb $c$. The resulting surjective homomorphism in \eqref{eq:quarticring} is an isomorphism: both sides are free $\C[u,v]$-modules with basis $1,w$, and the homomorphism identifies these bases.
\end{proof}
We make explicit why this presentation reconstructs the discriminant intrinsically. From the graded algebra alone one recovers the two-dimensional space $V=(R_4)_2$, the line $(R_4)_5$, and the multiplication isomorphism
\[
\Sym^5V\xrightarrow{\sim}(R_4)_{10}.
\]
The square of any nonzero $w\in(R_4)_5$ defines a nonzero binary quintic $f\in\Sym^5V$, determined up to a scalar. Its zero divisor in $\PP(V^\vee)$ is precisely $D$. A change of basis of $V$ changes the coordinate on this projective line, and a change of generator of $(R_4)_5$ rescales $f$. Hence the graded algebra determines the unordered divisor $D$ modulo $\PGL_2$. By Lemma~\ref{lem:quarticSerre}, this algebra is recovered from the enhanced category without choosing a coarse-space action or a line bundle as additional categorical data.

Finally, choose a nonsingular member $Q_0$ of the pencil. The endomorphism $Q_0^{-1}Q_1$ is self-adjoint for $Q_0$ and has distinct eigenvalues because the discriminant is reduced. Its eigenvectors are $Q_0$-orthogonal. None has zero $Q_0$-norm, since it would then be orthogonal to an eigenbasis and lie in the radical of $Q_0$. Rescaling the eigenvectors over $\C$ gives
\[
Q_0=\sum_{i=1}^5x_i^2,\qquad Q_1=\sum_{i=1}^5\lambda_i x_i^2.
\]
The eigenvalues represent the five discriminant points in an affine coordinate for which $Q_0$ is the nonsingular member at infinity. Permuting them permutes the coordinates $x_i$. Changing the basis of the pencil acts on them by a projective linear transformation; choosing the first new form nonsingular and rescaling the eigenvectors again gives the same normal form for the transformed eigenvalues. Thus the unordered projective equivalence class of $D$ determines the intersection of quadrics up to isomorphism. This gives a Hochschild--Serre algebra proof of Elagin's degree-four reconstruction theorem \cite[Theorem 3.1]{Ela26}.

\subsection{Proof of categorical Torelli}\label{subsec:proofTorelli}
\begin{proof}[Proof of Theorem~\ref{thm:main}]
An enhanced equivalence preserves Hochschild homology, so \eqref{eq:HHdegree} gives $d=d'$. It also preserves the bigraded Hochschild--Serre algebra and its multiplication. Propositions~\ref{prop:cubic}, \ref{prop:degree2}, and \ref{prop:degree1} reconstruct the appropriate graded Jacobian algebra in degrees three, two, and one. The graded homogeneous reconstruction results then identify the anticanonical equations. In degree four, Proposition~\ref{prop:quarticring} recovers the discriminant of the pencil and hence the intersection of quadrics. Thus $S\cong S'$ in every degree $1\leq d\leq4$.
\end{proof}

\section{Equivariant reconstruction in degrees one and two}
This section gives a second proof for $d=1,2$. The Serre functor recovers the deck involution. Equivariantization and topological K-theory recover the polarized integral first cohomology of the branch curve. Classical Torelli reconstructs that curve, and its canonical model reconstructs the double cover.

If $K_S^2=2$, the anticanonical system defines a finite double cover $\pi:S\to\PP^2$ branched over a smooth plane quartic $C$, with Geiser involution $\gamma$. If $K_S^2=1$, the bi-anticanonical morphism is a double cover of a quadric cone. On the smooth weighted-projective stack $Y=\PP(1,1,2)$, it is the representable double cover
\[
S=\{z^2=G_6(x_0,x_1,y)\}\subset\PP(1,1,2,3),\qquad
C=\{G_6=0\}\subset Y.
\]
Its deck transformation $z\mapsto-z$ is the Bertini involution $\beta$; see \cite[Section 2]{AB21}. The curve $C$ avoids the stacky point of $Y$, since the coefficient of $y^3$ is nonzero. Over that point the character of $\OO_Y(3)$ is nontrivial, so the stabilizer exchanges the two sheets of the relative spectrum. The total space is the smooth del Pezzo scheme. This distinguishes the stack double cover from its coarse morphism to the quadric cone, which has an isolated branch point at the vertex.

\begin{proposition}\label{prop:involution}
The deck involutions act on $\AA_S$ by
\[
S_{\AA_S}^3\cong\beta^*[5]\quad(d=1),\qquad
S_{\AA_S}^2\cong\gamma^*[3]\quad(d=2).
\]
Thus $\epsilon_1=S_{\AA_S}^3[-5]$ and $\epsilon_2=S_{\AA_S}^2[-3]$ are intrinsic involutions.
\end{proposition}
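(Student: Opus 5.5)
The proposal is to work entirely in Orlov's model \eqref{eq:Orlov}, $\AA_S\simeq\operatorname{MF}^{\mathrm{gr}}(W_D)$. The covering coordinate $z$ (for $d=1$) or $y$ (for $d=2$) occurs only through the square term of $W_D$ and has weight $D/2$. From \eqref{eq:Serre} we have $S_{\AA_S}\cong\tau^{-1}[2]$, hence
\[
S_{\AA_S}^{D/2}\cong\tau^{-D/2}[D].
\]
For $d=2$ this reads $S^2\cong\tau^{-2}[4]$, and for $d=1$ it reads $S^3\cong\tau^{-3}[6]$. Both claimed formulas therefore follow from the single key identity
\[
\tau^{D/2}\cong\sigma^*[1],
\]
as enhanced autoequivalences of $\operatorname{MF}^{\mathrm{gr}}(W_D)$. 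Here $\sigma$ is the graded automorphism of $P$ that negates the covering coordinate and fixes the others. I would prove this identity at the level of kernels, so that it holds as an isomorphism of quasi-functors and not merely objectwise.

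The first step is to check that $\sigma$ acts on the category at all. The automorphism $\sigma$ preserves $W_D$ and the grading, so it induces an autoequivalence $\sigma^*$ of $\operatorname{MF}^{\mathrm{gr}}(W_D)$. On the geometric side it is the deck involution $\gamma$, respectively $\beta$. Since $\sigma^*\OO_S\cong\OO_S$, the involution preserves $\AA_S=\langle\OO_S\rangle^\perp$. I would then check that Orlov's equivalence, which is built from the graded ring $P/(W_D)$ and mutation through $\OO_S$, is equivariant for graded ring automorphisms. This identifies the restriction of $\gamma^*$ (or $\beta^*$) to $\AA_S$ with $\sigma^*$ on factorizations.

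The second step is the rotation calculation on kernels. The diagonal kernel $\Delta$ is represented by a Koszul factorization on $P\otimes P$ built from factors attached to the individual coordinates. For the covering coordinate $z$ of weight $D/2$ the relevant term is $z_1^2-z_2^2$, and I would choose its Koszul factor to be the rank-one factorization
\[
P\otimes P\xrightarrow{\;z_1-z_2\;}P\otimes P(D/2)\xrightarrow{\;z_1+z_2\;}P\otimes P(D).
\]
A shift $[1]$ of a rank-one graded matrix factorization exchanges its two maps and changes the internal grading by the weight of the first map. Applying $[1]$ and the twist by $D/2$ therefore produces the factor with $z_1+z_2$ and $z_1-z_2$ interchanged. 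That factor is exactly the $z$-factor in the Koszul model of the graph of $\sigma$. The remaining coordinate factors are untouched, so $\Delta(D/2)[1]\cong(\id\times\sigma)^*\Delta$. This isomorphism of kernels gives $\tau^{D/2}\cong\sigma^*[1]$, up to sign conventions that are fixed once and for all.

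Substituting the key identity gives
\[
S^2\cong\tau^{-2}[4]\cong\sigma^*[-1][4]=\gamma^*[3]\qquad(d=2),
\]
and similarly $S^3\cong\beta^*[5]$ for $d=1$, using $\sigma^{-1}=\sigma$. Intrinsicness is then immediate. The Serre functor and the shift are transported by any enhanced equivalence, so $\epsilon_1$ and $\epsilon_2$ are defined from $\AA_S$ alone. Their squares are isomorphic to $\id$ because $\sigma^2=\id$.

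I expect the main obstacle to be the bookkeeping in the Koszul step, namely getting the shift and the grading exactly right. The exponent $D/2$ and the extra $[1]$ must both come out correctly, since a mismatch would produce $[3]$ versus $[5]$ errors. The compatibility of the sign and Koszul conventions with those in \eqref{eq:Serre} also has to be checked. As a cross-check I would compare with the Kuznetsov--Perry description of rotation functors for double covers. A second check is the Hochschild homology trace: the trace of $\epsilon$ on $\HH_0(\AA_S)$ should match the trace of $\gamma^*$ on $H^{*}(S)$ after removing the class of $\OO_S$.
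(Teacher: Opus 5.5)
Your argument is correct and reaches the same identity $S_{\AA_S}^{b}\cong\tau^{-b}[2b]\cong\delta^*[2b-1]$ with $b=D/2$, but by a different route from the paper.

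\textbf{How the two routes differ.} The paper proves the key relation $\tau^b\cong\delta^*[1]$ on the geometric side. It identifies $\tau$ with Kuznetsov's rotation functor (twist by $\OO_S(1)$ followed by left mutation through $\OO_S$). It then quotes \cite[Example 3.2, Proposition 3.4, Corollary 3.18]{Kuz19}, using that pushforward along the double cover $S\to Y$ is spherical, with $Y=\PP^2$ or the smooth stack $\PP(1,1,2)$.

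You instead stay in $\operatorname{MF}^{\mathrm{gr}}(W_D)$. You exchange the rank-one Koszul factor of the diagonal in the covering variable for the corresponding factor of the graph of $\sigma$. This works precisely because that variable enters $W_D$ only through its square.

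\textbf{What each buys.} Your route is self-contained and needs neither spherical functors nor the stacky base. It also gives the isomorphism at the level of kernels, hence as quasi-functors. Its extra cost is checking that Orlov's equivalence intertwines $\sigma^*$ with the geometric $\gamma^*$ or $\beta^*$ restricted to $\AA_S$. The paper avoids that check and relies instead on matching the grade shift with the geometric rotation functor.

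\textbf{First correction: the direction of the twist.} By your own rule, $\{z_1-z_2,\,z_1+z_2\}[1]\cong\{z_1+z_2,\,z_1-z_2\}(D/2)$. So the kernel identity is $\Delta[1]\cong\Gamma_\sigma(D/2)$, where $\Gamma_\sigma$ is the Koszul kernel of the graph of $\sigma$. Equivalently, $\Gamma_\sigma\cong\Delta(-D/2)[1]\cong\Delta(D/2)[-1]$, not $\Delta(D/2)[1]\cong\Gamma_\sigma$. The version you wrote would give $\tau^{D/2}\cong\sigma^*[-1]$, hence $\tau^D\cong[-2]$, contradicting \eqref{eq:Serre}. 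The corrected version gives $\tau^{D/2}\cong\sigma^*[1]$, which is the identity you actually use.

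\textbf{Second correction: the kernel category.} In the category of kernels of \cite{BFK14}, the identity kernel is induced from the diagonal $\mathbb G_m$ to $\mathbb G_m\times_{\mathbb G_m}\mathbb G_m$. This induction is the source of the twisted sectors in \eqref{eq:BFK}. So perform the factor swap on the diagonally graded Koszul model and then induce. This is harmless, since $\sigma$ commutes with the $\mathbb G_m$-action and twisting commutes with induction.
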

\begin{proof}
Let $b$ be half the weighted degree of the covering equation, so $b=3$ for $d=1$ and $b=2$ for $d=2$, and let $\delta$ be the corresponding deck involution. By \cite[Example 3.2, Proposition 3.4, and Corollary 3.18]{Kuz19}, the rotation functor satisfies $\tau^b\cong\delta^*[1]$; see also \cite[Theorem 7.7]{KP17}. The construction in \cite[Section 3.1]{Kuz19} explicitly allows smooth stacks. In the present case the pushforward of the double cover is spherical, its source twist is $\delta^*(-\otimes\OO_S(-b))[1]$, and its target twist is $(-\otimes\OO_Y(-b))[-1]$. The latter shifts the Lefschetz blocks by $-b$. The required collection has length four on $\PP(1,1,2)$ and length three on $\PP^2$, so the rotation identity applies in both cases. Formula \eqref{eq:Serre} therefore gives
\[
S_{\AA_S}^b\cong\tau^{-b}[2b]\cong\delta^*[2b-1].
\]
This gives exactly the two displayed identities. Their square relations are consistent with $S^6\cong[10]$ and $S^4\cong[6]$ from \eqref{eq:Serre}.
\end{proof}
Let $G=\mu_2$ and let $\Phi:\AA_S\simeq\AA_{S'}$ be an enhanced equivalence with $d=d'\in\{1,2\}$. It intertwines the Serre functors, hence the underlying involution functors $\epsilon_d$. We now check coherence with the geometric actions.

Choose an isomorphism $\eta:\Phi\epsilon_d\to\epsilon'_d\Phi$ and the geometric square isomorphisms $\alpha:\epsilon_d^2\to\id$ and $\alpha':(\epsilon'_d)^2\to\id$. The two maps from $\Phi\epsilon_d^2$ to $\Phi$ are
\[
\Phi(\alpha),\qquad
\alpha'\Phi\circ\epsilon'_d(\eta)\circ\eta\epsilon_d.
\]
Their ratio is an invertible natural endomorphism of $\Phi$. The degree-two and degree-one sector formulas with $(p,q)=(0,0)$ give
\[
\HH^0(\AA_S)=\C.
\]
Conjugation identifies the degree-zero endomorphisms of $\Phi$ with this space. Thus the ratio is a scalar $c\in\C^\times$. Replacing $\eta$ by $c^{-1/2}\eta$ makes the two maps equal. This proves the coherence relation for a $\Z/2$-equivariant functor. The two square roots differ by the nontrivial character. The same sector formulas give $\HH^j(\AA_S)=0$ for $j<0$, so there are no additional higher homotopies obstructing this construction for dg quasi-functors. Equivalently one may use the equivariantization criterion in \cite[Lemma 6.2]{DJR25}, followed by the enhanced construction of \cite{Ela14,Shi18}. We obtain
\begin{equation}\label{eq:equivfunctor}
\Phi^G:\AA_S^G\xrightarrow{\sim}(\AA_{S'})^G.
\end{equation}
This equivalence exists after the scalar choice; it is not asserted to be uniquely linearized.

\begin{theorem}\label{thm:equiv}
Let $S,S'$ be smooth complex del Pezzo surfaces of the same degree $d\in\{1,2\}$, and let $\Phi:\AA_S\simeq\AA_{S'}$ be a $\C$-linear enhanced equivalence. A coherent equivariant lift \eqref{eq:equivfunctor} induces an isomorphism of polarized integral Hodge structures
\[H^1(C,\Z)\cong H^1(C',\Z)\]
for the branch curves. Hence $C\cong C'$. This curve isomorphism extends to the ambient projective plane when $d=2$, and to the weighted projective plane when $d=1$; in both cases it lifts to an isomorphism $S\cong S'$.
\end{theorem}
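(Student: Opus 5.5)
Starting from the coherent lift \eqref{eq:equivfunctor}, the plan is to extract the branch curve's weight-one Hodge structure from the equivariant category. The first step is to identify $\AA_S^G$ geometrically. Proposition~\ref{prop:involution} shows that $\epsilon_d$ agrees with $\delta^*$, and $\delta^*$ preserves $\OO_S$, so the geometric $\mu_2$-action preserves $\Db(S)=\langle\AA_S,\OO_S\rangle$. Formula \eqref{eq:eqSOD} then gives
\[
\Db([S/\mu_2])=\langle\AA_S^G,\Db(\Rep\mu_2)\rangle .
\]
On the other hand, \eqref{eq:quotient} and \eqref{eq:rootSOD} give
\[
\Db([S/\mu_2])=\langle\Db(C),\Db(Y)\rangle,
\]
with $Y=\PP^2$ or $\PP(1,1,2)$. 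I take the equivariant structure on $\epsilon_d$ to be the geometric one, transported along the scalar choice; since the two square roots differ by a character, either choice yields an equivariant category equivalent to the geometric one. I then pass to topological K-theory of the enhanced categories, using Blanc's $\Ktop$, which is additive for semiorthogonal decompositions and carries a Hodge structure together with the Euler pairing. The equivalence $\Phi^G$ induces an isomorphism $\Ktop_1(\AA_S^G)\cong\Ktop_1(\AA_{S'}^G)$ that is compatible with both structures.

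The second step is the odd K-theory computation. $\Db(\Rep\mu_2)$ contributes only even classes. $\Db(Y)$ has a full exceptional collection of length three or four, by Beilinson or the weighted analogue, so it too has vanishing odd part. By additivity,
\[
\Ktop_1(\AA_S^G)\cong\Ktop_1(\Db(C))\cong H^1(C,\Z),
\]
and the Euler pairing on $\Ktop_1$ restricts to the intersection form, up to sign. The embedding of $\Db(C)$ may involve a character twist, and possibly a line-bundle twist, but such twists are Fourier--Mukai autoequivalences of $\Db(C)$. They act on $H^1$ by Hodge isometries, and these isometries preserve the principal polarization. The resulting polarized integral Hodge structures therefore agree, and classical Torelli gives $C\cong C'$.

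The third step is extension and lifting. For $d=2$, $C$ is a non-hyperelliptic genus-three curve, and $C\subset\PP^2$ is its canonical embedding. Any isomorphism $C\cong C'$ is therefore induced by a linear automorphism of $\PP^2$. For $d=1$, $C$ has genus four and has a unique $g^1_3$, since the cone case is a vanishing theta-null. Its canonical model lies on a quadric cone, and $C$ is the complete intersection of that cone with a cubic. The isomorphism extends to the cone, and hence to $\PP(1,1,2)$ as a weighted graded automorphism. In both cases the branch equations then agree up to a scalar. Rescaling $y$ or $z$ makes the covering equations agree, and this gives $S\cong S'$.

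I expect the main obstacle to be the K-theoretic step. First, I must check that $\Ktop$ together with its Hodge structure and Euler pairing is invariant under enhanced equivalence. Second, the identification of the equivariant category with the root-stack decomposition must be compatible with the chosen linearization. Third, the pairing induced on the $\Db(C)$ summand must be exactly the principal polarization rather than some multiple of it. My first attempt at the pairing question would be to compute the Euler form on $\Db(C)\subset\Db([S/\mu_2])$ directly. Because the embedding is fully faithful, this form equals $\chi_C$, and on odd classes $\chi_C$ is the intersection pairing.
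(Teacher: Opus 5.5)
Your proposal is correct and follows essentially the same route as the paper. Both arguments use the two semiorthogonal decompositions of $\Db([S/\mu_2])$ from \eqref{eq:eqSOD} and \eqref{eq:rootSOD}, then additivity of odd topological K-theory (with zero contribution from the exceptional pieces) to get a Hodge isometry $\Ktop_{-1}(\AA_S^G)\cong H^1(C,\Z)$ for the Euler pairing, then classical Torelli, and finally the canonical model (the plane for $d=2$, the unique quadric cone for $d=1$) with a rescaling of the covering coordinate.
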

\begin{proof}
Write $\pi:S\to Y$ for the double cover. Equations \eqref{eq:rootSOD} and \eqref{eq:eqSOD} give two semiorthogonal decompositions of $\Db([S/G])$. The first has the branch category $\Db(C)$ and the base category $\Db(Y)$; the second has $\AA_S^G$ and two exceptional objects. For $Y=\PP^2$, the full exceptional collection has length three \cite{Bei78}; for the smooth stack $\PP(1,1,2)$, the collection $\OO_Y,\OO_Y(1),\OO_Y(2),\OO_Y(3)$ is full exceptional \cite[Remark 2.2.6]{Can06}.

The cyclic-cover comparison of \cite[Theorem 3.4]{DJR25} makes the residual decompositions explicit. Its parameters are $(m,b,M)=(3,2,1)$ and $(4,3,1)$, where $m$ is the Lefschetz length, $b$ is half the branch degree, and $M=m-b$. Thus $0<M<b$, and there are $b-M$ residual exceptional objects:
\begin{equation}\label{eq:equivd2}
\AA_S^G=\langle\Db(C),E\rangle\qquad(d=2),
\end{equation}
\begin{equation}\label{eq:equivd1}
\AA_S^G=\langle\Db(C),E_1,E_2\rangle\qquad(d=1).
\end{equation}
Here the displayed branch embeddings are those obtained by the comparison mutations. The proof uses the root-stack decomposition and the exceptional collection, which also apply to the smooth weighted-projective base. In particular, the length $m$ is not used as a substitute for the dimension of that stack.

There is also a direct way to obtain the precise invariant needed here without comparing the residual embeddings. By additivity of topological K-theory \cite[Theorem 4.1]{DJR25}, the two fully faithful inclusions
\[
\Db(C)\hookrightarrow\Db([S/G]),\qquad
\AA_S^G\hookrightarrow\Db([S/G])
\]
are both isomorphisms on $\Ktop_{-1}$: their exceptional complements have zero odd topological K-groups. Each inclusion preserves the Euler pairing. Consequently their composite inverse gives a Hodge isometry
\begin{equation}\label{eq:Kcurve}
\Ktop_{-1}(\AA_S^G)\cong\Ktop_{-1}(C)\cong H^1(C,\Z).
\end{equation}
This explains why no intrinsic identification of a branch subcategory inside $\AA_S^G$ is required. The same conclusion follows from \eqref{eq:equivd2}--\eqref{eq:equivd1}.

The last isomorphism in \eqref{eq:Kcurve} is the integral edge isomorphism $K^1(C)\cong H^1(C,\Z)$ in the Atiyah--Hirzebruch spectral sequence. For a smooth complex curve, the only odd cohomology is in degree one, so there are neither differentials nor extension ambiguities \cite[Chapter 2]{Ati67}. By \cite[Propositions 4.2 and 4.4]{DJR25}, the enhanced equivalence \eqref{eq:equivfunctor} preserves the Hodge structure and topological Euler pairing. Topological Riemann--Roch identifies the latter on the curve with its integral cup-product pairing, using one fixed Bott sign convention for both curves. We therefore obtain a Hodge isometry
\[
(H^1(C,\Z),\smile)\xrightarrow{\sim}(H^1(C',\Z),\smile),
\]
and an isomorphism of principally polarized Jacobians. By the classical Torelli theorem \cite[Chapter VI]{ACGH85}, there is an isomorphism $\varphi:C\to C'$. It remains to recover the ambient double cover.

\smallskip\noindent\emph{Degree two.}
Adjunction for the smooth plane quartic gives
\begin{equation}\label{eq:canonicalquartic}
K_C\cong\OO_C(1).
\end{equation}
Thus its plane embedding is the canonical embedding. Pullback of differentials is an isomorphism
\[\varphi^*:H^0(C',K_{C'})\xrightarrow{\sim}H^0(C,K_C).\]
Projectivizing its dual gives a projective linear map between the original planes carrying $C$ to $C'$; see \cite[Chapter III, Section 2]{ACGH85}. It identifies the line bundles $\OO_{\PP^2}(2)$ defining the covers. The two quartic sections differ by a nonzero scalar after this identification, and that scalar has a square root over $\C$. Rescaling the trace-zero summand in
\[\Spec_{\PP^2}(\OO_{\PP^2}\oplus\OO_{\PP^2}(-2))\]
lifts the plane isomorphism to $S\cong S'$ \cite[Section 2]{Par91}.

\smallskip\noindent\emph{Degree one.}
Adjunction on $Y=\PP(1,1,2)$ gives
\begin{equation}\label{eq:canonicalsextic}
K_C\cong\OO_C(6-1-1-2)=\OO_C(2).
\end{equation}
The second Veronese map is
\begin{equation}\label{eq:veronese}
\nu_2:Y\longrightarrow\PP^3,\qquad
[x_0:x_1:y]\longmapsto[x_0^2:x_0x_1:x_1^2:y].
\end{equation}
It identifies the coarse weighted plane with the quadric cone $Q=\{u_0u_2-u_1^2=0\}$. Since $C$ avoids the vertex, its image under \eqref{eq:veronese} is a smooth complete intersection of $Q$ and a cubic. The four weight-two sections restrict to all canonical sections, by \eqref{eq:canonicalsextic}; this is a genus-four canonical embedding. Its ideal has a unique quadric generator, so $Q$ is the unique containing quadric \cite[Chapter III, Section 3]{ACGH85}.

The isomorphism on canonical differentials gives a projective isomorphism $h:\PP^3\to\PP^3$ carrying the two canonical curves to one another, and hence carrying $Q$ to $Q'$. To describe its lift explicitly, note that it fixes the vertex and acts on the quotient conic in the coordinates $u_0,u_1,u_2$. Over $\C$ this action comes from an invertible linear change of $x_0,x_1$; the remaining coordinate transforms as
\[y\longmapsto a y+q_2(x_0,x_1),\qquad a\neq0.\]
It therefore lifts to a weight-preserving automorphism of $\PP(1,1,2)$ preserving $\OO_Y(1)$, and in particular $\OO_Y(3)$. The transformed sextic sections have the same zero divisor and differ by a nonzero scalar. Rescaling the trace-zero summand in $\Spec_Y(\OO_Y\oplus\OO_Y(-3))$ gives the required isomorphism of the smooth surface double covers. This completes the proof.
\end{proof}

\section{The smaller Clifford component and spinor modifications}
We now consider a del Pezzo surface $S$ of degree $d\leq4$ as a conic bundle over $\PP^1$, and ask what the corresponding smaller Clifford component remembers. Let $W$ be a two-dimensional vector space. We use $\PP(W)$ for the space of lines in $W$.

\subsection{A common blow-up model}
Hereafter we use the standard construction of conic bundles by blowing up points on distinct fibers of a ruled surface \cite[Section 3.4]{DI09}, taking the ruled surface to be $\PP^1\times\PP^1$.
Put $B=\PP^1$ and $Y=B\times\PP(W)$. Choose distinct $t_i\in B$ and directions $u_i\in\PP(W)$, for $1\leq i\leq r=8-d$, in the locus where
\begin{equation}\label{eq:blowup}
\pi:S_u=\Bl_{(t_1,u_1),\ldots,(t_r,u_r)}Y\longrightarrow Y
\end{equation}
is del Pezzo. Let $f=p_B\circ\pi$, set
\[F=\pi^*p_B^*\OO_B(1),\qquad H=\pi^*p_W^*\OO_{\PP(W)}(1),\]
and denote the exceptional curves by $E_i$. We have
\begin{equation}\label{eq:intersections}
F^2=H^2=0,\quad F\cdot H=1,\quad E_i^2=-1,\quad
F\cdot E_i=H\cdot E_i=E_i\cdot E_j=0\ (i\neq j).
\end{equation}
Consequently
\begin{equation}\label{eq:canonicalblowup}
K_{S_u}=-2F-2H+\sum_iE_i,\quad
K_{S_u/B}=-2H+\sum_iE_i,\quad K_{S_u}^2=8-r=d.
\end{equation}
The fiber over $t_i$ is a transverse union of two lines, and all other fibers are smooth conics. Its discriminant is $D=t_1+\cdots+t_r$. The relatively anticanonical class $2H-\sum_iE_i$ has degree one on each component of a singular fiber and degree two on a smooth fiber. It gives the standard relative plane-conic embedding.

We justify that this del Pezzo locus is nonempty for every prescribed collection of distinct $t_i$. The following elementary argument permits us to choose the centers successively on these fibers. Let $T$ be a del Pezzo surface of degree $e\geq2$, and let $p\in T$. Suppose the strict transform of an integral curve $C$ has nonpositive anticanonical degree after blowing up $p$. Put $k=-K_T\cdot C$ and $m=\operatorname{mult}_pC$. Then $m\geq k$. The genus bound and the Hodge index theorem give
\[
m(m-1)\leq C^2-k+2\leq\frac{k^2}{e}-k+2,
\qquad k^2\left(1-\frac1e\right)\leq2.
\]
If $e\geq3$, this forces $k=1$, and adjunction then gives $C^2=-1$ and $m=1$. For $e=2$ there is one further possibility: $k=m=2$ and $C^2=2$. Equality in the Hodge index theorem gives $C\sim-K_T$; numerical and linear equivalence agree here because $T$ is a rational surface. Such an anticanonical curve is singular at $p$ precisely when $p$ lies on the ramification curve of the degree-two anticanonical map. Thus the blow-up is del Pezzo if $p$ avoids the finitely many $(-1)$-curves, and, in degree two, the ramification curve. Positivity on every integral curve and $K^2=e-1>0$ imply ampleness by the Nakai--Moishezon criterion.

Begin with $T=Y$, and at the $i$-th step choose $p$ on the smooth fiber over $t_i$. This fiber is not a $(-1)$-curve. In degree two it is also not the ramification curve, which is an irreducible curve of genus three. The forbidden set on this fiber is therefore finite, so a choice is always possible. This proves nonemptiness. Ampleness is open in the family of these blow-ups, so the del Pezzo locus is a nonempty open subset of $(\PP(W))^r$.

The ordered data $(t,u)$ modulo $\PGL_2\times\PGL_2$ have dimension $2r-6=10-2d$. Fixing the first coordinates leaves $r-3=5-d$ parameters modulo $\PGL(W)$. A fixed del Pezzo surface has only finitely many conic-bundle classes, since these are integral classes $h$ satisfying $h^2=0$ and $-K_S\cdot h=2$ in a lattice with negative-definite orthogonal complement to $K_S$. For each fibration there are only finitely many choices of components of its singular fibers to contract. A resulting identification with the trivial ruled surface is unique up to $\PGL(W)$ over the fixed base. Hence the map from our quotient parameter space to isomorphism classes of surfaces has finite fibers. In particular its image has dimension $5-d$, and two general members are non-isomorphic.

\subsection{The explicit abstract spinor}
Choose nonzero vectors $\widetilde v_i\in W$, and write $v_i=[\widetilde v_i]$. The divisor sequence
\[0\longrightarrow\OO_{S_u}\longrightarrow\OO_{S_u}(E_i)
\longrightarrow\OO_{E_i}(-1)\longrightarrow0\]
identifies $\Ext^1(\OO_{E_i}(-1),\OO_{S_u}\otimes W)$ with $W$. Use the class $\widetilde v_i$ in each summand and form
\begin{equation}\label{eq:spinorext}
0\longrightarrow\OO_{S_u}\otimes W\longrightarrow\mathcal G_v
\longrightarrow\bigoplus_{i=1}^r\OO_{E_i}(-1)\longrightarrow0,
\qquad \mathcal F_v:=\mathcal G_v(-H).
\end{equation}
Scaling $\widetilde v_i$ changes the identification of the quotient summand and leaves the middle bundle unchanged up to isomorphism. Near $E_i$, choose a complement $w_i$ to $\widetilde v_i$. Then
\[\mathcal G_v\cong\OO(E_i)\widetilde v_i\oplus\OO w_i.\]
In particular $\mathcal G_v$ and $\mathcal F_v$ are locally free of rank two.
\begin{proposition}\label{prop:abstract}
The bundle $\mathcal F_v$ is an abstract spinor on $S_u/B$. More precisely,
\[
\rk\mathcal F_v=2,\quad c_1(\mathcal F_v)=K_{S_u/B},\quad
c_2(\mathcal F_v)=0,\quad Rf_*\mathcal F_v=0.
\]
\end{proposition}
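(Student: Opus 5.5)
The plan is to read every claimed invariant off the defining extension \eqref{eq:spinorext}, twisted by $-H$, using the intersection numbers \eqref{eq:intersections}. The rank and local freeness were already settled by the local splitting $\mathcal G_v\cong\OO(E_i)\widetilde v_i\oplus\OO w_i$ near each $E_i$. Away from $\bigcup_iE_i$ the bundle $\mathcal G_v$ is just $\OO_{S_u}\otimes W$. So only the Chern classes and the pushforward need an argument.

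\emph{First Chern class.} The local splitting already gives $\det\mathcal G_v\cong\OO_{S_u}(\sum_iE_i)$. Equivalently, $c_1(\OO_{E_i}(-1))=[E_i]$, since a sheaf supported on a curve has first Chern class equal to its support cycle. Hence $c_1(\mathcal G_v)=\sum_iE_i$. Twisting a rank-two bundle by $-H$ subtracts $2H$, so
\[c_1(\mathcal F_v)=-2H+\sum_iE_i=K_{S_u/B}\]
by \eqref{eq:canonicalblowup}. This holds as an actual class in $\Pic(S_u)$, and therefore also modulo $f^*\Pic(B)$.

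\emph{Second Chern class.} I would compute $\operatorname{ch}_2$ additively. For a smooth curve $E\subset S$ and a line bundle $L$ on $E$, Grothendieck--Riemann--Roch for the embedding gives
\[\operatorname{ch}_2(\OO_E(L))=\deg L-\tfrac12E^2.\]
For $\OO_{E_i}(-1)$ this equals $-1+\tfrac12=-\tfrac12$. Summing, $\operatorname{ch}_2(\mathcal G_v)=-r/2$. On the other hand $c_1(\mathcal G_v)^2=(\sum_iE_i)^2=-r$. The identity $\operatorname{ch}_2=\tfrac12c_1^2-c_2$ then forces $c_2(\mathcal G_v)=0$. The twist formula for rank two gives
\[c_2(\mathcal F_v)=c_2(\mathcal G_v)-c_1(\mathcal G_v)\cdot H+H^2=0-0+0=0,\]
using $H\cdot E_i=0$ and $H^2=0$. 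This GRR sign bookkeeping is the only place where a slip is plausible. As a cross-check I would verify that $\chi(\mathcal F_v)=0$ via Riemann--Roch, $\chi=2+\tfrac12c_1\cdot(c_1-K)-c_2$, which must agree with the pushforward vanishing proved next.

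\emph{Pushforward vanishing.} I would apply $Rf_*$ to \eqref{eq:spinorext} tensored with $\OO(-H)$, and show that both outer terms have zero pushforward.

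For the left term, $\OO_{S_u}(-H)=\pi^*p_W^*\OO_{\PP(W)}(-1)$. Since $R\pi_*\OO_{S_u}=\OO_Y$, we get $Rf_*\OO(-H)\otimes W=Rp_{B*}p_W^*\OO(-1)\otimes W$. By base change this is $\OO_B\otimes R\Gamma(\PP(W),\OO(-1))\otimes W=0$.

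For the right term, $H\cdot E_i=0$ and $E_i\cong\PP^1$ is contracted by $p_W\circ\pi$, so $\OO(-H)|_{E_i}$ is trivial. Hence each summand is $\OO_{E_i}(-1)$, supported in the fiber over $t_i$. Its pushforward is the skyscraper at $t_i$ with stalk $R\Gamma(\PP^1,\OO(-1))=0$.

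The long exact sequence of $Rf_*$ then gives $Rf_*\mathcal F_v=0$. Together with the rank-two and $c_1$ statements, this means $\mathcal F_v$ satisfies Definition~\ref{def:spinor}, completing the plan.
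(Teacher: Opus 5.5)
Your proposal is correct and follows essentially the same route as the paper. The paper reads the Chern classes off the extension \eqref{eq:spinorext}, obtaining $\ch(\OO_{E_i}(-1))=E_i+\tfrac12E_i^2$ from the divisor sequence, which is the same $-\tfrac12$ you get from GRR, and then applies the rank-two twist formula; it proves $Rf_*\mathcal F_v=0$ by showing that both outer terms of the twisted extension have vanishing pushforward, exactly as you do (your Riemann--Roch check $\chi(\mathcal F_v)=0$ is a consistent extra not in the paper).
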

\begin{proof}
From the divisor sequence,
\[\ch(\OO_{E_i}(-1))=E_i+\tfrac12E_i^2.\]
Thus \eqref{eq:spinorext} gives $c_1(\mathcal G_v)=\sum_iE_i$ and $c_2(\mathcal G_v)=0$. Twisting a rank-two bundle by $\OO(-H)$ and using \eqref{eq:intersections} gives
\[
c_1(\mathcal F_v)=\sum_iE_i-2H=K_{S_u/B},\qquad c_2(\mathcal F_v)=0.
\]
Furthermore
\[
Rf_*\OO_{S_u}(-H)=Rp_{B*}\OO_{B\times\PP(W)}(0,-1)=0,
\]
and
\[
Rf_*\OO_{E_i}(-1)=R\Gamma(\PP^1,\OO(-1))\otimes\OO_{t_i}=0.
\]
The restriction of $H$ to $E_i$ is trivial. Twisting \eqref{eq:spinorext} by $-H$ and applying $Rf_*$ proves the final vanishing.
\end{proof}

\subsection{The endomorphism order and the modified surface}
Set
\begin{equation}\label{eq:flagorder}
\mathcal R_v=\{a\in\End(W)\otimes\OO_B\mid
 a(t_i)(\C\widetilde v_i)\subset\C\widetilde v_i\text{ for every }i\}.
\end{equation}
Locally at $t_i$, in a basis adapted to $\widetilde v_i$, this is the order \eqref{eq:Iwahori}. The local splitting of $\mathcal G_v$ gives
\[
\cEnd(\mathcal G_v)\cong
\begin{pmatrix}\OO&\OO(E_i)\\\OO(-E_i)&\OO\end{pmatrix}.
\]
The direct images of $\OO$ and $\OO(E_i)$ are $\OO_B$, with no higher direct image. For $\OO(-E_i)$, the blow-down gives the ideal of $(t_i,u_i)$, whose direct image by $p_B$ is $\OO_B(-t_i)$, again with no higher direct image. Indeed the evaluation $\OO_B\to\OO_{t_i}$ is surjective. These identifications are compatible with multiplication in the generic matrix algebra $\End(W)\otimes\C(B)$. Consequently
\begin{equation}\label{eq:endorder}
Rf_*\cEnd(\mathcal F_v)=f_*\cEnd(\mathcal F_v)\cong\mathcal R_v.
\end{equation}
The formula is independent of the source directions $u_i$.

To identify the partner, define
\[S_v=\Bl_{(t_1,v_1),\ldots,(t_r,v_r)}(B\times\PP(W)).\]
Apply the same elementary transformation on $S_v$ using its own directions $v_i$, and denote that bundle by $\mathcal F_{v,v}$. We claim that it is the normalized canonical spinor $\mathcal F^0_{S_v/B}$.

Choose a nonzero volume form on $W$. The Euler sequence on the second factor gives a surjection
\[\OO(-H)\otimes W\longrightarrow\OO\]
whose kernel at a point $[v]\in\PP(W)$ is the line $\C v$ in $W$, after the displayed twist. At the center $(t_i,v_i)$ choose fiber coordinate $s$ with $v_i$ at $s=0$. In a suitable basis the surjection is $(s,1)$. On the blow-up of the ideal $(t-t_i,s)$, both functions in that ideal are divisible by a local equation $e$ of the exceptional curve. The positive modification in the $\widetilde v_i$ direction replaces that basis vector by $e^{-1}\widetilde v_i$, so the surjection extends locally as $(s/e,1)$. It remains surjective. Globally it gives
\[
0\longrightarrow\omega_{S_v/B}\longrightarrow\mathcal F_{v,v}
\longrightarrow\OO_{S_v}\longrightarrow0,
\]
where the kernel follows from the determinant computed in Proposition~\ref{prop:abstract}. Since $Rf_{v*}\mathcal F_{v,v}=0$, its connecting morphism is an isomorphism. After normalizing this scalar, uniqueness in \eqref{eq:canonicalintro} gives
\[\mathcal F_{v,v}\cong\mathcal F^0_{S_v/B}.\]
The normalized Clifford presentation therefore has
\[
\Cl_0(S_v/B)\cong f_{v*}\cEnd(\mathcal F^0_{S_v/B})
\cong\mathcal R_v.
\]
Together with \eqref{eq:endorder}, the reconstruction of a ternary form from its pointwise Clifford algebra in \cite[Proposition 2.7]{Kuz25} and Theorem~\ref{thm:kuzspinor} now give
\begin{equation}\label{eq:partner}
(S_u/B)_{\mathcal F_v}\cong S_v/B.
\end{equation}
This argument compares the normalized algebras themselves, and therefore determines the partner rather than only its Morita class.

\begin{theorem}\label{thm:partners}
Let $1\leq d\leq4$, $r=8-d$, and fix a reduced divisor $D\subset\PP^1$ of degree $r$. On the nonempty del Pezzo locus above:
\begin{enumerate}
\item all surfaces $S_u$ have equivalent $B$-linear Clifford components;
\item every direction tuple $v$ gives the abstract spinor $\mathcal F_v$ of \eqref{eq:spinorext}, whose spinor modification is $S_v$;
\item after quotienting the directions by $\PGL_2$, the fixed-discriminant family has dimension $5-d$ and has generically finite fibers over the moduli of surfaces. In particular $\mathcal K_{f_u}$ alone does not reconstruct $S_u$.
\end{enumerate}
The equivalences in (1) can be chosen t-exact for the t-structures induced from the surfaces.
\end{theorem}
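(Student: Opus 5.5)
The plan is to assemble Theorem~\ref{thm:partners} from the constructions immediately preceding it, treating the three items in the order (2), (1), (3). For (2), fix direction data $u$ and $v$ in the del Pezzo locus. Proposition~\ref{prop:abstract} shows that $\mathcal F_v$ is an abstract spinor on $S_u/B$ in the sense of Definition~\ref{def:spinor}. The computation \eqref{eq:endorder} identifies $f_{u*}\cEnd(\mathcal F_v)$ with the flag order $\mathcal R_v$, and this order depends only on $D$ and $v$, not on $u$. Applying the same elementary transformation on $S_v$ with its own directions gives $\mathcal F_{v,v}\cong\mathcal F^0_{S_v/B}$. Hence $\Cl_0(S_v/B)\cong\mathcal R_v$ in the normalization fixed after Theorem~\ref{thm:kuzspinor}. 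Theorem~\ref{thm:kuzspinor} states that $\Cl_0((S_u/B)_{\mathcal F_v})\cong f_{u*}\cEnd(\mathcal F_v)\cong\mathcal R_v$. Kuznetsov's reconstruction of a ternary form from its even Clifford algebra \cite[Proposition 2.7]{Kuz25} then yields \eqref{eq:partner}, which is item (2).

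For (1), Theorem~\ref{thm:kuzspinor} supplies a $B$-linear, t-exact Fourier--Mukai equivalence
\[
\Ker\bigl((S_u/B)_{\mathcal F_v}\bigr)\xrightarrow{\sim}\Ker(S_u/B).
\]
Composing with the isomorphism \eqref{eq:partner} gives $\mathcal K_{f_v}\simeq\mathcal K_{f_u}$. This equivalence is t-exact for the standard t-structures, which restrict to the kernel categories by \cite[Lemma 2.9]{Kuz25}. A consistency check is available: both sides are $\Db(\PP^1,\mathcal R)$ for Morita-equivalent hereditary orders, and by \eqref{eq:rootcliff} both are equivalent to $\Db(\coh\mathcal X_D)$. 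This uses that $\mathcal R_u$ and $\mathcal R_v$ are both locally of the form \eqref{eq:Iwahori} at each $t_i$ and are maximal elsewhere.

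For (3), I would reuse the dimension count from the common blow-up model. The del Pezzo locus is a nonempty open subset of $\PP(W)^r$, and its quotient by $\PGL(W)$ has dimension $r-3=5-d$, since $r\ge4$ and the stabilizer of $r\ge3$ distinct general points is finite. The map to isomorphism classes has finite fibers for three reasons:
\begin{itemize}
\item there are finitely many conic-bundle classes $h$ with $h^2=0$ and $-K\cdot h=2$;
\item each fibration admits finitely many choices of singular-fiber components to contract;
\item the resulting identification with $B\times\PP(W)$ is unique up to $\PGL(W)$ once the base is fixed.
\end{itemize}
The image therefore has dimension $5-d\ge1$, so general $u$ and $v$ give non-isomorphic surfaces with equivalent $\mathcal K_f$. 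This shows that the Clifford component alone does not determine the surface.

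I expect the main obstacle to be step (2), specifically that the normalized comparison determines the actual partner rather than only its Morita class. Two points must hold. First, the isomorphism $\mathcal F_{v,v}\cong\mathcal F^0_{S_v/B}$ must respect the normalization $\det E\otimes L\cong\OO_B$. Second, the local identification of $f_*\cEnd$ with $\mathcal R_v$ must be an isomorphism of algebras, not just of sheaves. To secure this, I would check multiplicativity inside the generic matrix algebra $\End(W)\otimes\C(B)$, where all of these sheaves embed compatibly.
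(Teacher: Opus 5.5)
Your proposal is correct and follows essentially the same route as the paper. The paper's proof assembles the same pieces: the dimension and finiteness count of Section 5.1, Proposition~\ref{prop:abstract}, the identifications \eqref{eq:endorder}--\eqref{eq:partner} (including $\mathcal F_{v,v}\cong\mathcal F^0_{S_v/B}$ and \cite[Proposition 2.7]{Kuz25}), and Theorem~\ref{thm:kuzspinor} for the $B$-linear t-exact equivalence.
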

\begin{proof}
Nonemptiness and the moduli dimension were proved in Section 5.1. Proposition~\ref{prop:abstract} constructs the abstract spinor, and \eqref{eq:endorder}--\eqref{eq:partner} identify its modification with $S_v$. The equivalence and its t-exactness follow from Theorem~\ref{thm:kuzspinor}. This also proves Theorem~\ref{thm:spinorintro}.
\end{proof}

\subsection{Canonical and distinguished spinors}
Recall the normalized extension
\begin{equation}\label{eq:canonicalspinor}
0\longrightarrow\omega_{S/B}\longrightarrow\mathcal F^0_{S/B}
\longrightarrow\OO_S\longrightarrow0,
\qquad Rf_*\mathcal F^0_{S/B}=0,
\end{equation}
whose connecting morphism is the identity. Let $i:\Kf\hookrightarrow\AA_S=\langle\Kf,E_f\rangle$, with $E_f=f^*\OO_B(-1)$.
\begin{proposition}\label{prop:adjoint}
The right adjoint of $i$ satisfies
\[i^!E_f\cong\mathcal F^0_{S/B}\otimes f^*\OO_B(-1).\]
\end{proposition}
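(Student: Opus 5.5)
The plan is to identify $i^!E_f$ by its universal property, using relative Grothendieck duality for $f$. Since $\Kf$ is admissible in $\AA_S$, the right adjoint $i^!$ exists. It is characterized up to unique isomorphism by
\[
\Hom_{\AA_S}(X,E_f)\cong\Hom_{\Kf}(X,i^!E_f)\qquad (X\in\Kf),
\]
naturally in $X$. Both categories are full subcategories of $\Db(S)$. So it suffices to exhibit an object $G\in\Kf$ with a morphism $G\to E_f$ in $\Db(S)$ whose cone $C$ satisfies $\RHom_S(X,C)=0$ for every $X\in\Kf$. Yoneda then gives $G\cong i^!E_f$.

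\emph{Step 1: the candidate lies in $\Kf$.} Take $G=\mathcal F^0_{S/B}\otimes f^*\OO_B(-1)$. By the projection formula,
\[
Rf_*G\cong Rf_*\mathcal F^0_{S/B}\otimes\OO_B(-1)=0,
\]
using \eqref{eq:canonicalspinor}. Hence $G\in\Ker(Rf_*)=\Kf$.

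\emph{Step 2: the morphism and its cone.} Twist the normalized extension \eqref{eq:canonicalspinor} by $f^*\OO_B(-1)$. Its quotient map gives a morphism $G\to\OO_S\otimes f^*\OO_B(-1)=E_f$. Rotating the extension triangle identifies the cone as
\[
C\cong\omega_{S/B}\otimes f^*\OO_B(-1)[1].
\]
Since $f$ is a flat conic bundle, hence Gorenstein of relative dimension one, we have $f^!(-)\cong f^*(-)\otimes\omega_{S/B}[1]$. Therefore $C\cong f^!\OO_B(-1)$.

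\emph{Step 3: vanishing.} Grothendieck duality gives, for $X\in\Kf$,
\[
\RHom_S(X,f^!\OO_B(-1))\cong\RHom_B(Rf_*X,\OO_B(-1))=0.
\]
Applying $\RHom_S(X,-)$ to the triangle $G\to E_f\to C$ then shows that $G\to E_f$ induces isomorphisms $\RHom(X,G)\xrightarrow{\sim}\RHom(X,E_f)$ for all $X\in\Kf$. By the Yoneda characterization above, this proves $i^!E_f\cong G$.

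I expect the main obstacle to be bookkeeping rather than substance. One must check that the cone is exactly $f^!$ of a line bundle, which requires the right shift and the identification $f^!\cong f^*\otimes\omega_{S/B}[1]$ for the flat Gorenstein morphism $f$. The normalization of the connecting morphism plays no role in the vanishing argument. It only pins down which extension $G$ is, and the uniqueness statement in \eqref{eq:canonicalintro} makes $\mathcal F^0_{S/B}$ well defined.
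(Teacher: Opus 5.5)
Your proposal is correct and follows the same route as the paper. Both arguments tensor the normalized extension by $f^*\OO_B(-1)$, use relative duality $f^!\cong f^*(-)\otimes\omega_{S/B}[1]$ to kill $\RHom_S(X,\omega_{S/B}\otimes f^*\OO_B(-1)[1])$ for $X\in\Kf$, and conclude that the twisted spinor represents $\RHom_S(-,E_f)$ on $\Kf$. Your projection-formula check that the candidate lies in $\Kf$ spells out a step the paper only asserts.
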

\begin{proof}
Put $L=\OO_B(-1)$ and tensor \eqref{eq:canonicalspinor} by $f^*L$. For $G\in\Kf$, relative duality gives
\[
\RHom_S(G,\omega_{S/B}\otimes f^*L[1])
\cong\RHom_B(Rf_*G,L)=0.
\]
The extension triangle therefore induces a natural isomorphism
\[
\RHom_S(G,\mathcal F^0_{S/B}\otimes f^*L)
\cong\RHom_S(G,f^*L).
\]
The object on the left belongs to $\Kf$. It represents the functor defining $i^!E_f$, proving the assertion.
\end{proof}
The proposition produces the normalized canonical spinor of the source. Let
$\Phi_{v,u}:\mathcal K_{f_v}\xrightarrow{\sim}\mathcal K_{f_u}$ be the equivalence defined by $\mathcal F_v$ in Theorem~\ref{thm:kuzspinor}, with the normalization fixed above. It sends $\mathcal F^0_{S_v/B}$ to $\mathcal F_v$, and $B$-linearity gives
\begin{equation}\label{eq:transportcone}
\Phi_{v,u}(i_v^!E_{f_v})\cong\mathcal F_v\otimes f_u^*\OO_B(-1).
\end{equation}
Thus the modifying spinor is the transported adjoint image of the exceptional complement on the partner, after undoing the base twist.

\section{Hyperbolic equivalence and the quartic conic-bundle classes}
\subsection{Hyperbolic equivalence over the projective line}
For a flat quadric fibration of relative dimension $n$, hyperbolic reduction along a regular isotropic line subbundle produces a quadric fibration of relative dimension $n-2$. Geometrically, this operation is described fiberwise by projecting from the corresponding isotropic point in its tangent hyperplane section. Conversely, with suitable additional data, hyperbolic extension produces a quadric fibration of higher relative dimension. Hyperbolic reduction and hyperbolic extension generate an equivalence relation called hyperbolic equivalence \cite[Definition 2.12]{Kuz24}. This relation preserves the discriminant, the shifted quadratic cokernel sheaf, and the Morita class of the even Clifford algebra \cite[Proposition 1.1]{Kuz24}. Its relation to spinor modification is the subject of \cite[Conjecture 1.4]{Kuz25}. In this section, we prove the conjecture for simply degenerating conic bundles over the projective line over an algebraically closed field of characteristic different from two. Over projective spaces, the criterion of \cite[Theorem 1.3]{Kuz24} includes additional Witt conditions, which we verify explicitly in the following situation.
\begin{theorem}\label{thm:hyperbolic}
Let $k$ be an algebraically closed field of characteristic different from two. Let $f_i:X_i\to\PP^1_k$, $i=1,2$, be flat conic bundles with generically smooth fibers and reduced discriminant divisors $\Delta_i$. The following conditions are equivalent:
\begin{enumerate}
\item $\Delta_1=\Delta_2$ as divisors on the fixed base $\PP^1_k$;
\item $X_1/\PP^1$ and $X_2/\PP^1$ are hyperbolically equivalent;
\item $X_1/\PP^1$ and $X_2/\PP^1$ are spinor modifications of one another.
\end{enumerate}
In particular, \cite[Conjecture 1.4]{Kuz25} holds for these simply degenerating conic bundles.
\end{theorem}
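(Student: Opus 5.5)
The plan is to prove $(1)\Rightarrow(2)\Rightarrow(3)\Rightarrow(1)$, leaning on Kuznetsov's results as summarized in the introduction. The implication $(2)\Rightarrow(1)$ (and hence the closing step of the cycle) is immediate from \cite[Proposition 1.1]{Kuz24}, which says hyperbolic equivalence preserves the discriminant divisor. For $(2)\Leftrightarrow(3)$ we invoke \cite[Corollary 1.3]{Kuz25}, which relates hyperbolic equivalence to spinor modification for conic bundles over a base where the relevant Clifford data are controlled. The converse direction can also be seen through Theorem~\ref{thm:kuzspinor}: a spinor modification induces a $B$-linear Morita equivalence of even Clifford algebras. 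The discriminant is the locus where $\Cl_0$ fails to be Azumaya, that is, its ramification divisor, so it is preserved. The real content is therefore $(1)\Rightarrow(2)$.

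For $(1)\Rightarrow(2)$, present each $X_i$ as a quadratic form $q_i:L_i\to\Sym^2E_i^\vee$ with $\rk E_i=3$ on $B=\PP^1_k$. Since $\Delta_i$ is the zero divisor of $\det q_i$, we have $\deg\Delta_i=2\deg E_i^\vee-3\deg L_i$ up to sign convention. The first step is to twist $E_i$ by line bundles, which does not change $X_i$, to a common normalization. Because the parities of $\deg E_i$ are fixed by $\deg\Delta$ modulo the twist freedom, we can arrange $\det E_1\otimes L_1\cong\det E_2\otimes L_2$, as in the normalization $\det E\otimes L\cong\OO_B$ of Section~\ref{subsec:clifford}.

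Next, consider the shifted cokernel sheaf $\mathcal C_i=\coker(q_i:E_i\otimes L_i\to E_i^\vee)$. Reducedness of $\Delta_i$ means the corank is exactly one at each discriminant point, so $\mathcal C_i$ is a line bundle on the reduced scheme $\Delta_i$. It carries the induced quadratic form with values in a twist of $\OO_{\Delta_i}$. Since $\Delta_i$ is a disjoint union of reduced $k$-points and $k$ is algebraically closed, any two nondegenerate rank-one quadratic forms on $\OO_{\Delta}$ with the same values bundle are isometric: one rescales at each point by a square root. Hence $\mathcal C_1\cong\mathcal C_2$ isometrically once $\Delta_1=\Delta_2$ and the twists agree.

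Finally, we apply \cite[Theorem 1.3]{Kuz24}. Over $\PP^n$ it states that two forms with isometric shifted cokernel sheaves, the same parity of rank, and equal additional invariants are hyperbolically equivalent. Over $\PP^1_k$ the extra Witt-type condition lives in a Witt group of $k$, or of $k(\PP^1)$ modulo residues. For $k$ algebraically closed, $W(k)=\Z/2$ is detected by rank parity. The residue part is already encoded in $\mathcal C_i$, and the generic conic is split by Tsen's theorem, so the condition holds automatically.

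The main obstacle is this last step. One must pin down precisely what the Witt condition of \cite[Theorem 1.3]{Kuz24} is on $\PP^1$ and verify that rank parity plus the twist normalization kill it; the twist normalization also has to be compatible with the degree constraint relating $\deg\Delta$, $\deg E$ and $\deg L$. I would first compute it directly: diagonalize both forms generically over $k(t)$ using Tsen, and compare the classes in $W(k(t))$ via Milnor's exact sequence $0\to W(k)\to W(k(t))\to\bigoplus_x W(k(x))$. The residues are determined by $\Delta$ and $\mathcal C$, and the $W(k)$ term is a rank parity, so the two classes agree.
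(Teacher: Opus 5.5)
Your outline is the paper's proof: normalize the twists so that both forms take values in one line bundle, show that the shifted cokernel sheaves on the reduced scheme $\Delta$ are isometric by rescaling with square roots, apply \cite[Theorem 1.3]{Kuz24} for $(1)\Rightarrow(2)$, and use \cite[Proposition 1.1]{Kuz24}, \cite[Corollary 1.3]{Kuz25} and the Azumaya locus of $\Cl_0$ for the other implications.

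The step that fails as written is how you close the cycle. \cite[Corollary 1.3]{Kuz25} gives only $(2)\Rightarrow(3)$. The converse $(3)\Rightarrow(2)$ is exactly \cite[Conjecture 1.4]{Kuz25}, which the theorem claims to establish. So closing through $(3)\Rightarrow(2)\Rightarrow(1)$ is circular. You must instead use the argument you list only as an alternative, with reducedness made explicit. A spinor modification gives a $B$-linear Morita equivalence of even Clifford algebras. The Azumaya locus is Morita invariant, and for simple degenerations it is the complement of the discriminant. Hence $\Delta_1$ and $\Delta_2$ have the same support, and reducedness gives $\Delta_1=\Delta_2$. This is precisely the paper's proof of $(3)\Rightarrow(1)$, and the conjecture then follows from $(3)\Rightarrow(1)\Rightarrow(2)$.

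The step you call the main obstacle is resolved by reading off the extra conditions in \cite[Theorem 1.3]{Kuz24} after normalizing $L_1\cong L_2\cong\OO_{\PP^1}(-m)$. For $m$ even, the condition is equality of the Witt classes in $W(k)$ of the fibers at a point outside $\Delta$. Both fibers are nondegenerate of rank three over an algebraically closed field, so your rank-parity remark settles it. For $m$ odd, the additional condition arises only when $\dim B\equiv0\pmod 4$, so it is vacuous on $\PP^1$. Your proposal does not address this case, and it cannot be avoided. Twisting changes $\deg L_i$ only by even integers, and $\deg L_i\equiv\deg\Delta\pmod 2$, so odd $\deg\Delta$ forces $m$ odd. (Thus the parity fixed by $\Delta$ is that of $\deg L_i$, not of $\deg E_i$, which twisting can change.) No computation in $W(k(t))$ via Milnor's sequence is needed. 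Your normalization $\det E_i\otimes L_i\cong\OO_B$ is fine. Together with $\OO(\Delta_i)\cong(\det E_i^\vee)^{2}\otimes L_i^{-3}$ it forces $L_1\cong L_2\cong\OO_{\PP^1}(-\Delta)$. You should say so explicitly, since \cite[Theorem 1.3]{Kuz24} compares forms with values in the same line bundle.
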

\begin{proof}
Present the conic bundles by forms $q_i:L_i\to\Sym^2E_i^\vee$, with $\rk E_i=3$, and let
\[\widetilde q_i:E_i\otimes L_i\longrightarrow E_i^\vee,\qquad C_i=\coker(\widetilde q_i).\]
Taking determinants gives
\begin{equation}\label{eq:detdiscr}
\OO_{\PP^1}(\Delta_i)\cong(\det E_i^\vee)^{\otimes2}\otimes L_i^{-3}.
\end{equation}
In particular $\deg L_i\equiv\deg\Delta_i\pmod2$. Twisting the presentation by
\[(E_i,L_i)\longmapsto(E_i\otimes M_i,L_i\otimes M_i^{-2})\]
does not change the conic bundle. If $\Delta_1=\Delta_2=\Delta$, the two $L_i$ have the same parity of degree. Since $\Pic(\PP^1)=\Z$, choose the twists so that $L_1\cong L_2\cong\OO_{\PP^1}(-m)$ for one integer $m$.

At $p\in\Delta$, trivialize the bundles over the completed local ring $k[[t]]$. The symmetric matrices representing $\widetilde q_i$ have determinant $t$ times a unit. Their reductions have rank two: rank at most one would force the determinant to have order at least two. Splitting off the nondegenerate rank-two part and completing squares gives diagonal form $\operatorname{diag}(u_1,u_2,a)$, with $u_1,u_2$ units and $a$ a uniformizer times a unit. All units in $k[[t]]$ have square roots because $k$ is algebraically closed and $2$ is invertible. Thus each form is congruent to $\operatorname{diag}(1,1,t)$.

It follows that $C_i$ is a line bundle on the finite reduced scheme $\Delta$, pushed forward to $\PP^1$. Its self-duality is
\[\overline q_i:C_i\xrightarrow{\sim}\mathcal E xt^1(C_i,\OO_{\PP^1}(-m)).\]
Every line bundle on $\Delta$ is trivial. At each point the induced quadratic form is nonzero on a one-dimensional $k$-space. Rescaling independently by square roots of the ratios gives an isometry
\[(C_1,\overline q_1)\cong(C_2,\overline q_2).\]
The same assertion holds with both cokernels zero if $\Delta$ is empty.

We can now apply \cite[Theorem 1.3]{Kuz24} to the normalized morphisms $E_i(-m)\to E_i^\vee$. When $m$ is even, its first additional condition is equality of the fiberwise Witt classes at a point outside $\Delta$. Over $k$ these nondegenerate forms have the same Witt class because both have rank three. When $m$ is odd, the second condition occurs only if the dimension of the base is divisible by four. It is therefore absent on $\PP^1$. The theorem proves (1)$\Rightarrow$(2).

By \cite[Proposition 1.1(2)]{Kuz24}, hyperbolic equivalence preserves the discriminant, proving (2)$\Rightarrow$(1). By \cite[Corollary 1.3]{Kuz25}, it also implies spinor modification, proving (2)$\Rightarrow$(3). Finally, a spinor modification induces a base-linear Morita equivalence of the even Clifford algebras. For a ternary form with simple degeneration, the algebra is Azumaya exactly away from the discriminant; at a discriminant point it has the local order \eqref{eq:Iwahori}. The Azumaya locus is invariant under base-linear Morita equivalence. Hence the discriminant supports agree, and their reducedness gives equality of divisors. This proves (3)$\Rightarrow$(1).
\end{proof}
\begin{corollary}\label{cor:hyperbolicfamily}
For fixed $D=t_1+\cdots+t_r$ in Theorem~\ref{thm:partners}, all conic bundles $S_u/\PP^1$ in the del Pezzo locus belong to one hyperbolic-equivalence class. The $(r-3)=(5-d)$-dimensional family is therefore contained in one spinor-modification class and one Morita class of even Clifford algebras.
\end{corollary}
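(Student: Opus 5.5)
The corollary follows from Theorem~\ref{thm:hyperbolic}, applied to the conic bundles $f_u:S_u\to B=\PP^1$ constructed in Section~5.1. First we check the hypotheses of that theorem for every direction tuple $u$ in the del Pezzo locus. Each $f_u$ is flat: $S_u$ is a smooth surface, $B$ is a smooth curve, and $f_u$ is surjective with one-dimensional fibers. Its generic fiber is a smooth conic. By the description after \eqref{eq:canonicalblowup}, the only singular fibers lie over the $t_i$, and each is a transverse union of two lines. Hence the discriminant divisor is $D=t_1+\cdots+t_r$. To see that it is reduced, we check simple degeneration locally. Near $(t_i,u_i)$ the blow-up has local equation $xy=t-t_i$ in suitable coordinates, so the local symmetric matrix of the relative conic has determinant a uniformizer times a unit. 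This also agrees with the degree count $\deg\Delta=r$ from \eqref{eq:detdiscr} once the presentation is normalized by the relative anticanonical embedding. The base field is $\C$, which is algebraically closed of characteristic zero.

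Next, for any two tuples $u,v$ in the del Pezzo locus, the discriminants of $f_u$ and $f_v$ are both equal to $D$ as divisors on the fixed base. The implication (1)$\Rightarrow$(2) of Theorem~\ref{thm:hyperbolic} then shows that $S_u/\PP^1$ and $S_v/\PP^1$ are hyperbolically equivalent. Hyperbolic equivalence is an equivalence relation \cite[Definition 2.12]{Kuz24}, so the whole family lies in a single class. The implication (2)$\Rightarrow$(3) puts every pair in the same spinor-modification class. This is also realized concretely by Theorem~\ref{thm:partners}(2) and \eqref{eq:partner}, through the explicit spinor $\mathcal F_v$. Finally, \cite[Proposition 1.1]{Kuz24} says that hyperbolic equivalence preserves the Morita class of the even Clifford algebra. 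Alternatively, \eqref{eq:endorder} identifies every $\Cl_0(S_v/B)$ with the order $\mathcal R_v$, and all of these are Morita equivalent to one another via Theorem~\ref{thm:kuzspinor}. The dimension count $r-3=5-d$ comes from Section~5.1.

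The main point to watch is reducedness of the discriminant in the precise sense used in Theorem~\ref{thm:hyperbolic}, that is, as the determinant divisor of the ternary quadratic form and not just as the set of points with singular fibers. I would settle this with the local computation $xy=t-t_i$ on the blow-up, which shows the determinant vanishes to order exactly one. Everything else is a direct citation.
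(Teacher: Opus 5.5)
Your proposal is correct and takes essentially the paper's approach. The paper's proof is the single observation that every $S_u/\PP^1$ has reduced discriminant $D$, so the corollary follows from Theorems~\ref{thm:hyperbolic} and \ref{thm:partners}. Your checks of flatness and of reducedness (via smoothness of $S_u$ at the nodes, or via $\deg\Delta=r$ with $r$ distinct support points) simply make explicit the hypotheses that the one-line proof leaves implicit.
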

\begin{proof}
Every member has reduced discriminant $D$, so the conclusion follows from Theorems~\ref{thm:hyperbolic} and \ref{thm:partners}.
\end{proof}

\subsection{The five pairs of conic-bundle classes}
Let $S$ be a smooth quartic del Pezzo surface, let $P_S\cong\PP^1$ be its anticanonical pencil of quadrics, and let $D_{\mathrm{ac}}=\{\lambda_1,\ldots,\lambda_5\}\subset P_S$ be the divisor of singular members. Let $\mathcal P_S\subset\AA_S$ denote the canonical heart characterized in \cite[Lemma 2.13]{Ela26}.
\begin{proposition}\label{prop:ten}
The ten conic-bundle classes on $S$ are canonically partitioned into five pairs indexed by $D_{\mathrm{ac}}$. For the pair indexed by $\lambda_i$:
\begin{enumerate}
\item under a normalized equivalence
\[\AA_S\simeq\Db\bigl(\coh\rootstack{P_S}{D_{\mathrm{ac}}}\bigr),\]
identifying $\mathcal P_S$ with the coherent heart, the two objects $E_h=f_h^*\OO_{\PP^1}(-1)=\OO_S(-h)$ are the two exceptional simple sheaves over $\lambda_i$;
\item for $h'=-K_S-h$, one has $S_{\AA_S}(E_h)\cong E_{h'}[1]$ and $\mathcal K_{f_h}=E_h^\perp\subset\AA_S$;
\item perpendicular reduction gives an equivalence
\[\mathcal K_{f_h}\simeq\Db\bigl(\coh\rootstack{P_S}{D_{\mathrm{ac}}\setminus\{\lambda_i\}}\bigr).\]
\end{enumerate}
Thus the ten decompositions $\AA_S=\langle\mathcal K_{f_h},E_h\rangle$ arise by choosing a stacky point and one of its two simple objects.
\end{proposition}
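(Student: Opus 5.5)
The plan is to start from the lattice-theoretic description of conic-bundle classes on a quartic del Pezzo surface and then transport everything to the weighted projective line. A conic-bundle class $h$ satisfies $h^2=0$ and $-K_S\cdot h=2$. Then $h'=-K_S-h$ satisfies $h'^2=4-4+0=0$, $-K_S\cdot h'=4-2=2$ and $h\cdot h'=2$, so classes come in pairs $\{h,h'\}$ with $h+h'=-K_S$. For the indexing, the two rulings of a singular quadric $Q_{\lambda_i}$ of the pencil (a cone over a smooth quadric surface) cut $S$ in two conic pencils whose classes sum to the hyperplane class $-K_S$. This gives five pairs canonically indexed by $D_{\mathrm{ac}}$, and since there are exactly ten classes, every class occurs exactly once.

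For (2), use $\omega_S=\OO(-h-h')$. The Serre functor of $\AA_S$ is computed from that of $\Db(S)$ by $S_{\AA_S}=\LL_{\OO_S}\circ S_S$ on objects, or rather via the mutation formula for the right orthogonal. Here $S_S(E_h)=\OO(-h)\otimes\omega_S[2]=\OO(-2h-h')[2]$, and a left mutation through $\OO_S$ should produce $\OO(-h')[1]$. I would verify this with the exact sequence $0\to\OO(-2h-h')\to\OO(-h')^{\oplus?}\dots$ coming from the pencil $|h|$. More concretely, the Koszul sequence $0\to\OO(-h)\to\OO^2\to\OO(h)\to0$ tensored appropriately, together with $\RHom(\OO_S,\OO(-2h-h'))=H^\bullet(K_S-h)=H^{2-\bullet}(h)^\vee=\C^2[-2]$, identifies the cone. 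The identification $\mathcal K_{f_h}=E_h^\perp$ inside $\AA_S$ is just \eqref{eq:conicsod}: we have $\AA_S=\langle\Kf,E_f\rangle$, so $\Kf={}^\perp E_f$. To get the right orthogonal, I would either convert using the Serre functor, since ${}^\perp E=(S^{-1}E)^\perp$ so that ${}^\perp E_h=E_{h'}^\perp$ up to relabeling, or state the decomposition as $\langle E_h',\Kf\rangle$. I expect the convention bookkeeping here to need care, namely which of $h,h'$ appears.

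For (1) and (3), invoke \eqref{eq:quarticcat} and Elagin's canonical heart \cite[Lemma 2.13]{Ela26}. The exceptional objects of $\coh\mathcal X_D$ that are simple are exactly the two simple sheaves $\OO_{D_i}$ and $\OO_{D_i}\otimes\chi$ at each stacky point, so there are ten of them. The objects $\OO_S(-h)$ are exceptional in $\AA_S$ and lie in the heart $\mathcal P_S$; this is a check using Elagin's characterization, and it is the main obstacle. I would argue via $K_0$: their classes have rank zero and are supported at $\lambda_i$, as detected by the intrinsic rank function and the pencil-coordinate of the singular quadric producing $h$. Also $S_{\AA_S}$ swaps the two simples over a stacky point with shift $[1]$ (since $\omega_{\mathcal X}$ restricted to $D_i$ is the nontrivial character), matching (2) and pinning the pair. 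Finally, for a simple sheaf $\OO_{D_i}$ at a weight-two point, the right orthogonal in $\Db(\coh\mathcal X_D)$ is equivalent to $\Db$ of the root stack with that point's weight reduced to one, by Geigle--Lenzing perpendicular calculus. This gives (3), consistent with \eqref{eq:rootcliff} for the four-point discriminant of $f_h$.
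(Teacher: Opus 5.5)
Your overall plan matches the paper's. You pair the ten classes via the rulings of the five singular quadrics, identify the $E_h$ with the exceptional simples of Elagin's heart, and get (3) from Geigle--Lenzing perpendicular calculus. However, your argument for (2) fails as written, in two places.

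\emph{The Serre functor formula.} $\LL_{\OO_S}\circ S_S$ is not the Serre functor of $\AA_S$. Since $\AA_S^\perp=\langle\omega_S\rangle$, the right adjoint of $\AA_S\hookrightarrow\Db(S)$ is $\RR_{\omega_S}$. Hence $S_{\AA_S}=\RR_{\omega_S}\circ S_S$, equivalently $S_{\AA_S}^{-1}=\LL_{\OO_S}\circ S_S^{-1}$.
\begin{itemize}
\item Your version produces the cone of $\OO_S^{\oplus2}\to\OO(K_S-h)[2]$. Its cohomology sheaves are $\OO(K_S-h)$ in degree $-2$ and $\OO_S^{\oplus2}$ in degree $-1$, so it is not $\OO(-h')[1]$.
\item With the correct formula, your ingredients do close the computation. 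One has $\RHom(\OO(K_S-h),\omega_S)=H^\bullet(\OO(h))=\C^2$ in degree $0$. The coevaluation is the Koszul sequence twisted by $K_S$, namely $0\to\OO(K_S-h)\to\omega_S^{\oplus2}\to\OO(K_S+h)\to0$. This gives $S_{\AA_S}(E_h)\cong\OO(K_S+h)[1]=E_{h'}[1]$.
\end{itemize}
The paper does not compute this; it takes the relation from \cite[Lemmas 2.13--2.16]{Ela26}.

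\emph{The orthogonal side.} In the paper's convention, $\langle\mathcal A,\mathcal B\rangle$ means $\mathcal A=\mathcal B^\perp$, as in $\Db(X)=\langle\mathcal C_X,\OO_X\rangle$. So $\AA_S=\langle\mathcal K_{f_h},E_h\rangle$ gives $\mathcal K_{f_h}=E_h^\perp$ at once. This also follows directly from $\RHom(f^*\OO_B(-1),G)\cong\RHom_B(\OO_B(-1),Rf_*G)=0$. Your ${}^\perp E_h$ is the wrong side. The Serre conversion you propose would give ${}^\perp E_h=(S_{\AA_S}^{-1}E_h)^\perp=E_{h'}^\perp=\mathcal K_{f_{h'}}$, which is a different subcategory. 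For instance, $\OO_C(-1)$ for a smooth fiber $C$ of $f_h$ lies in $\mathcal K_{f_h}$. But $f_{h'}$ restricts to a double cover $C\to\PP^1$ (as $h\cdot h'=2$), so $Rf_{h'*}\OO_C(-1)\neq0$.

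\emph{Part (1).} A $K_0$ argument cannot place $\OO_S(-h)$ in $\mathcal P_S$.
\begin{itemize}
\item A class determines an exceptional object of the hereditary heart only up to even shift.
\item Reading off ``rank zero, supported at $\lambda_i$'' presupposes the $P_S$-linear structure on $\AA_S$ and its compatibility with the normalized equivalence.
\end{itemize}
That is exactly what the paper imports from \cite[Lemmas 2.13--2.16]{Ela26}: the ten $\OO_S(-h)$ are the ten exceptional simples, in pairs over the five stacky points. You should cite this rather than present it as a check. Your observation that $S=\tau[1]$ swaps the two simples over a weight-two point is a correct consistency check, not a proof.

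A smaller point: the count ``ten classes, so each occurs once'' also needs the ten pencils to be distinct. This holds because a plane spanned by a conic on $S$ lies in at most one quadric of the pencil; otherwise it would lie in $S$.
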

\begin{proof}
Write $S=\Bl_{p_1,\ldots,p_5}\PP^2$, with basis $H,E_1,\ldots,E_5$ of its Picard group. Its conic-bundle fiber classes are
\[h_i=H-E_i,\qquad h'_i=2H-\sum_{j\neq i}E_j,\qquad 1\leq i\leq5.\]
They satisfy $h_i^2=(h'_i)^2=0$, $(-K_S)\cdot h_i=(-K_S)\cdot h'_i=2$, and $h_i+h'_i=-K_S$. The singular quadric corresponding to this pair is a corank-one cone over $\PP^1\times\PP^1$. Its vertex does not lie on $S$. Projection from the vertex restricts to a degree-two map from $S$ to the quadric surface, and its two projections give the two conic bundles. This identifies the five pairs with the singular quadrics.

For any fiber class $h$, equation \eqref{eq:conicsod} gives $\AA_S=\langle\mathcal K_{f_h},\OO_S(-h)\rangle$. By \cite[Lemmas 2.13--2.16]{Ela26}, the ten line bundles $\OO_S(-h)$ are precisely the ten exceptional simple objects in the canonical heart. They occur in pairs over the five weighted points, and
\[S_{\AA_S}(\OO_S(-h_i))\cong\OO_S(-h'_i)[1].\]
Semiorthogonality gives $\mathcal K_{f_h}=\OO_S(-h)^\perp$. The perpendicular calculus of \cite[Section 9]{GL91} lowers the supporting weight of an exceptional simple by one. At a weight-two point this deletes the stacky structure, which proves the last assertion.
\end{proof}
The base in (3) is the anticanonical pencil $P_S$. This is an abstract description of the perpendicular category. A conic fibration $f_h:S\to B_h$ also gives its own action of $\Perf(B_h)$ on that category. An identification of the two projective lines, and of their actions, is additional data when a base-linear statement is required. The construction of Section 5 always retains the original fixed conic-bundle base.

\section{Weighted projective lines and canonical algebras}
We translate the preceding geometry into the representation theory of canonical algebras. For $\mathcal X_D=\rootstack{\PP^1}{D}$ and $r=\deg D$, the all-weight-two lattice is
\[
\mathbb L=\langle\vec c,\vec x_1,\ldots,\vec x_r\mid2\vec x_i=\vec c\rangle,
\qquad\vec\omega=(r-2)\vec c-\sum_i\vec x_i.
\]
Normalizing $\deg\vec c=2$ gives
\begin{equation}\label{eq:orbdegree}
\deg\vec\omega=r-4,\qquad\chi_{\mathrm{orb}}(\mathcal X_D)=2-\frac r2.
\end{equation}
Thus $r=4$ is tubular, while every $r\geq5$ occurring here is wild. The canonical tilting bundle becomes
\begin{equation}\label{eq:tilting2}
T_{\mathrm{can}}=\OO\oplus\bigoplus_{i=1}^r\OO(\vec x_i)\oplus\OO(\vec c).
\end{equation}
With our right-module convention, put $\Lambda_D=\End(T_{\mathrm{can}})$. Its quiver has source $0$, sink $\infty$, and an intermediate vertex on each arm,
\[0\xrightarrow{a_i}i\xrightarrow{b_i}\infty,\qquad1\leq i\leq r.\]
After sending $t_1,t_2,t_3$ to $\infty,0,1$ and writing $t_j=\lambda_j$, the relations can be chosen as
\begin{equation}\label{eq:relations}
b_i a_i=b_2a_2-\lambda_i b_1a_1,\qquad i=3,\ldots,r,\qquad\lambda_3=1.
\end{equation}
Thus
\begin{equation}\label{eq:canonicalequiv}
\Kf\simeq\Db(\coh\mathcal X_D)\simeq\Db(\modu\text{-}\Lambda_D).
\end{equation}
\begin{center}\small
\begin{tabular}{ccccc}
\toprule
degree $d$&$r=8-d$&weight type&representation type&vertices\\\midrule
4&4&$(2,2,2,2)$&tubular&6\\
3&5&$(2,2,2,2,2)$&wild&7\\
2&6&$(2^6)$&wild&8\\
1&7&$(2^7)$&wild&9\\\bottomrule
\end{tabular}
\end{center}
For a quartic del Pezzo surface, the conic component $\Kf$ is the four-point tubular category, while the full orthogonal $\AA_S$ is the five-point wild category associated with the anticanonical pencil. For a cubic del Pezzo surface, $\Kf$ is already a five-point wild category; it is equivalent to the full orthogonal of the quartic surface whose pencil has those five discriminant points. It is still strictly smaller than the cubic surface's $\AA_S$.

\subsection{What a spinor equivalence means algebraically}
Fix the coarse map $\pi_D:\mathcal X_D\to B$ and the tilting equivalence
\[\Psi_D=\RHom(T_{\mathrm{can}},-):\Db(\coh\mathcal X_D)\xrightarrow{\sim}\Db(\modu\text{-}\Lambda_D).\]
The $B$-linear structure becomes the action
\begin{equation}\label{eq:baseaction}
\begin{split}
\rho_D:\Perf(B)&\longrightarrow\End^{\mathrm{ex}}\bigl(\Db(\modu\text{-}\Lambda_D)\bigr),\\
M&\longmapsto\Psi_D\bigl(\pi_D^*M\otimes\Psi_D^{-1}(-)\bigr).
\end{split}
\end{equation}
This records the marking of the fixed conic-bundle base. The weighted-line category and its coarse-line action do not specify the direction data $v$ selecting a conic bundle.

There is an explicit vector bundle on $\mathcal X_D$ giving the flagged order $\mathcal R_v$. Let $\mathcal D_i$ denote the root divisor over $t_i$. Inside the meromorphic bundle associated with $\pi_D^*(\OO_B\otimes W)$, allow a simple pole along $\mathcal D_i$ only in the line $\C\widetilde v_i$. Denote the resulting bundle by $P_v$. Locally it is
\[
P_v\cong\OO(\mathcal D_i)\widetilde v_i\oplus\OO w_i.
\]
The distinct root divisors are disjoint, so these modifications define a global rank-two bundle. At each stacky point its two fiber characters are the trivial and nontrivial characters. It is therefore a generator for the coherent categories locally over the coarse base.

Since $\pi_{D*}\OO(\mathcal D_i)=\OO_B$ and $\pi_{D*}\OO(-\mathcal D_i)=\OO_B(-t_i)$, the same matrix calculation as in Section 5 gives
\begin{equation}\label{eq:rootendom}
\mathcal R_v\cong\pi_{D*}\cEnd_{\mathcal X_D}(P_v).
\end{equation}
There are no higher direct images because the coarse morphism is tame and cohomologically affine. Under $\Psi_D$ the bundle becomes the distinguished complex
\[\mathsf P_v:=\RHom_{\mathcal X_D}(T_{\mathrm{can}},P_v)\in\Db(\modu\text{-}\Lambda_D).\]
The complex, together with $\rho_D$, recovers the algebra sheaf in \eqref{eq:rootendom}: for every $M\in\Perf(B)$,
\[
\RHom_B(M,\mathcal R_v)\cong
\RHom_{\Lambda_D}(\rho_D(M)\mathsf P_v,\mathsf P_v).
\]
These isomorphisms are compatible with composition. Thus the relative endomorphism algebra is determined, including its multiplication. The ordinary finite-dimensional algebra $\End(\mathsf P_v)$ supplies only its global sections.

The transported object in \eqref{eq:transportcone} gives another realization of the same relative order. Indeed, for any fixed $B$-linear enhanced equivalence $\Theta:\mathcal K_{f_u}\simeq\Db(\coh\mathcal X_D)$, let $Q_v=\Theta(\mathcal F_v)$. Base-linearity and full faithfulness give
\[R\pi_{D*}R\mathcal H om(Q_v,Q_v)\cong\mathcal R_v.\]
The explicit bundle $P_v$ makes a convenient choice for \eqref{eq:rootendom}; this argument identifies the relative endomorphism algebras and does not require an equality $Q_v\cong P_v$ for an arbitrary chosen $\Theta$.

Algebraically, spinor modification is represented by the replacement
\begin{equation}\label{eq:pointed}
(\Lambda_D,\rho_D,\mathsf P_u)\longmapsto
(\Lambda_D,\rho_D,\mathsf P_v),
\end{equation}
followed by reconstruction of the flagged order using \eqref{eq:rootendom}.
\begin{proposition}\label{prop:representation}
Let $d=4$, so $D$ has four points and $\Lambda_D$ is the six-vertex tubular canonical algebra of type $(2,2,2,2)$. For $u,v$ in the del Pezzo locus, the replacement \eqref{eq:pointed} recovers $\mathcal R_u\mapsto\mathcal R_v$, and the corresponding conic bundles are $S_u/B$ and $S_v/B$. The latter is the spinor modification $(S_u/B)_{\mathcal F_v}$.
\end{proposition}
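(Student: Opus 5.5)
The argument specializes the general constructions of Sections 5 and 7 to $r=4$. First I will identify $\Lambda_D$ concretely. By \eqref{eq:tilting2}, with $r=4$ the canonical tilting bundle has $1+4+1=6$ indecomposable summands, so $\Lambda_D$ has six vertices. By \eqref{eq:orbdegree}, $\deg\vec\omega=0$ and $\chi_{\mathrm{orb}}=0$, so the weight type $(2,2,2,2)$ is tubular. After normalizing $t_1,t_2,t_3$ to $\infty,0,1$, the relations \eqref{eq:relations} reduce to the single relation for $i=4$ with parameter $\lambda_4$. This makes $\Lambda_D$ the standard tubular canonical algebra, and \eqref{eq:canonicalequiv} identifies $\Kf$ with $\Db(\modu\text{-}\Lambda_D)$ for every $S_u$ in the family. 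All these surfaces share the same $D$, hence the same $\Lambda_D$ and the same coarse action $\rho_D$ of \eqref{eq:baseaction}. The replacement \eqref{eq:pointed} is therefore well defined on a single category.

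Next I will show that the triple $(\Lambda_D,\rho_D,\mathsf P_v)$ recovers $\mathcal R_v$ as a sheaf of algebras. I will use the displayed isomorphism $\RHom_B(M,\mathcal R_v)\cong\RHom_{\Lambda_D}(\rho_D(M)\mathsf P_v,\mathsf P_v)$, which holds naturally in $M\in\Perf(B)$ and is compatible with composition. This comes from \eqref{eq:rootendom} together with the projection formula for the cohomologically affine coarse map $\pi_D$, transported through $\Psi_D$. By Yoneda over $\Perf(B)$, or by taking $M=\OO_B(-n)$ for all $n$ and forming the graded module $\bigoplus_n H^0(\mathcal R_v(n))$, the algebra sheaf is determined. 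Applying this to $u$ and to $v$ shows that \eqref{eq:pointed} sends $\mathcal R_u$ to $\mathcal R_v$.

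The last step identifies the conic bundles. By Section 5, $\mathcal R_v\cong\Cl_0(S_v/B)$ in the normalized presentation, since the canonical spinor $\mathcal F^0_{S_v/B}$ equals $\mathcal F_{v,v}$. By the reconstruction of a ternary form from its pointwise Clifford algebra (\cite[Proposition 2.7]{Kuz25}), $\mathcal R_v$ determines $S_v/B$, and likewise $\mathcal R_u$ determines $S_u/B$. The identification $(S_u/B)_{\mathcal F_v}\cong S_v/B$ is then exactly \eqref{eq:partner}, using \eqref{eq:endorder} and Theorem~\ref{thm:kuzspinor}.

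I expect the main obstacle to be the multiplicative, $B$-linear compatibility in the second step. The isomorphism \eqref{eq:rootendom} must be compatible with composition when it is expressed through $\Psi_D$ and $\rho_D$, and not only with global sections. To handle this I will check that $\Psi_D$ is an enhanced equivalence that intertwines $\pi_D^*M\otimes-$ with $\rho_D(M)$ coherently, so that the relative Hom is computed inside the module category. The remaining identifications are then formal.
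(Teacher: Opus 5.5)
Your proposal is correct and follows the paper's proof: both use \eqref{eq:canonicalequiv} with the base action \eqref{eq:baseaction}, recover $\mathcal R_v$ as an algebra from $\mathsf P_v$ via the composition-compatible relative Hom formula, and conclude with \eqref{eq:partner}; you simply supply more detail. One small slip: for $r=4$, \eqref{eq:relations} consists of two relations ($i=3$ and $i=4$) with a single free parameter $\lambda_4$, not a single relation.
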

\begin{proof}
Equation \eqref{eq:canonicalequiv} gives the canonical-algebra model of the Clifford component, and \eqref{eq:baseaction} retains its base action. The relative Hom formula reconstructs the order $\mathcal R_v$ from the distinguished complex. By \eqref{eq:partner}, the normalized conic bundle associated with this algebra is $S_v/B$, and it is the modification defined by $\mathcal F_v$. This proves the assertion.
\end{proof}


\begin{thebibliography}{ACGH85}
\bibitem[Ahm10]{Ahm10} I. Ahmed, Homogeneous polynomials with isomorphic Milnor algebras, \emph{Czechoslovak Math. J.} \textbf{60} (2010), no. 1, 125--131. \href{https://doi.org/10.1007/s10587-010-0003-9}{doi:10.1007/s10587-010-0003-9}.
\bibitem[Ahm12]{Ahm12} I. Ahmed, Weighted homogeneous polynomials with isomorphic Milnor algebras, \emph{J. Prime Res. Math.} \textbf{8} (2012), 106--114. \href{https://arxiv.org/abs/0909.5429}{arXiv:0909.5429}.
\bibitem[ACGH85]{ACGH85} E. Arbarello, M. Cornalba, P. A. Griffiths, and J. Harris, \emph{Geometry of Algebraic Curves}, Volume I, Grundlehren Math. Wiss. 267, Springer, 1985.
\bibitem[AB21]{AB21} K. Ascher and D. Bejleri, Moduli of double covers and degree one del Pezzo surfaces, \emph{Eur. J. Math.} \textbf{7} (2021), 557--569.
\bibitem[Ati67]{Ati67} M. F. Atiyah, \emph{K-theory}, W. A. Benjamin, New York--Amsterdam, 1967.
\bibitem[BFK14]{BFK14} M. Ballard, D. Favero, and L. Katzarkov, A category of kernels for equivariant factorizations and its implications for Hodge theory, \emph{Publ. Math. Inst. Hautes \'Etudes Sci.} \textbf{120} (2014), 1--111.
\bibitem[Bei78]{Bei78} A. A. Beilinson, Coherent sheaves on $\PP^n$ and problems of linear algebra, \emph{Funct. Anal. Appl.} \textbf{12} (1978), 214--216.
\bibitem[BFN10]{BFN10} D. Ben-Zvi, J. Francis, and D. Nadler, Integral transforms and Drinfeld centers in derived algebraic geometry, \emph{J. Amer. Math. Soc.} \textbf{23} (2010), no. 4, 909--966. \href{https://doi.org/10.1090/S0894-0347-10-00669-7}{doi:10.1090/S0894-0347-10-00669-7}.
\bibitem[BS20]{BS20} D. Bergh and O. M. Schn\"urer, Conservative descent for semi-orthogonal decompositions, \emph{Adv. Math.} \textbf{360} (2020), 106882.
\bibitem[BO01]{BO01} A. Bondal and D. Orlov, Reconstruction of a variety from the derived category and groups of autoequivalences, \emph{Compositio Math.} \textbf{125} (2001), 327--344.
\bibitem[Cad07]{Cad07} C. Cadman, Using stacks to impose tangency conditions on curves, \emph{Amer. J. Math.} \textbf{129} (2007), 405--427.
\bibitem[Can06]{Can06} A. Canonaco, \emph{The Beilinson complex and canonical rings of irregular surfaces}, Mem. Amer. Math. Soc. \textbf{183} (2006), no. 862.
\bibitem[CI04]{CI04} D. Chan and C. Ingalls, Non-commutative coordinate rings and stacks, \emph{Proc. London Math. Soc.} \textbf{88} (2004), 63--88.
\bibitem[DJR25]{DJR25} H. Dell, A. Jacovskis, and F. Rota, Cyclic covers: Hodge theory and categorical Torelli theorems, \emph{Int. Math. Res. Not. IMRN} (2025), no. 10, rnaf120. \href{https://arxiv.org/abs/2310.13651}{arXiv:2310.13651}.
\bibitem[Dol82]{Dol82} I. Dolgachev, Weighted projective varieties, in \emph{Group Actions and Vector Fields}, Lecture Notes in Math. 956, Springer, 1982, 34--71.
\bibitem[Ela12]{Ela12} A. Elagin, Descent theory for semiorthogonal decompositions, \emph{Sb. Math.} \textbf{203} (2012), 645--676.
\bibitem[Ela14]{Ela14} A. Elagin, On equivariant triangulated categories, \href{https://arxiv.org/abs/1403.7027}{arXiv:1403.7027} (2014).
\bibitem[Ela26]{Ela26} A. Elagin, A categorical Torelli theorem for quartic del Pezzo surfaces, \href{https://arxiv.org/abs/2603.26579}{arXiv:2603.26579} (2026).
\bibitem[GL87]{GL87} W. Geigle and H. Lenzing, A class of weighted projective curves arising in representation theory of finite-dimensional algebras, in \emph{Singularities, Representation of Algebras, and Vector Bundles}, Lecture Notes in Math. 1273, Springer, 1987, 265--297.
\bibitem[GL91]{GL91} W. Geigle and H. Lenzing, Perpendicular categories with applications to representations and sheaves, \emph{J. Algebra} \textbf{144} (1991), 273--343.
\bibitem[Kuz08]{Kuz08} A. Kuznetsov, Derived categories of quadric fibrations and intersections of quadrics, \emph{Adv. Math.} \textbf{218} (2008), 1340--1369.
\bibitem[Kuz19]{Kuz19} A. Kuznetsov, Calabi--Yau and fractional Calabi--Yau categories, \emph{J. Reine Angew. Math.} \textbf{753} (2019), 239--267. \href{https://arxiv.org/abs/1509.07657}{arXiv:1509.07657}.
\bibitem[Kuz24]{Kuz24} A. Kuznetsov, Quadric bundles and hyperbolic equivalence, \emph{Geom. Topol.} \textbf{28} (2024), 1287--1339. \href{https://arxiv.org/abs/2108.01546}{arXiv:2108.01546}.
\bibitem[Kuz25]{Kuz25} A. Kuznetsov, Spinor modifications of conic bundles and derived categories of 1-nodal Fano threefolds, \href{https://arxiv.org/abs/2502.02082}{arXiv:2502.02082} (2025).
\bibitem[KP17]{KP17} A. Kuznetsov and A. Perry, Derived categories of cyclic covers and their branch divisors, \emph{Selecta Math. (N.S.)} \textbf{23} (2017), 389--423.
\bibitem[KP21]{KP21} A. Kuznetsov and A. Perry, Serre functors and dimensions of residual categories, \href{https://arxiv.org/abs/2109.02026}{arXiv:2109.02026} (2021).
\bibitem[LRZ24]{LRZ24} X. Lin, J. V. Rennemo, and S. Zhang, IVHS via Kuznetsov components and categorical Torelli theorems for weighted hypersurfaces, \href{https://arxiv.org/abs/2408.08266}{arXiv:2408.08266} (2024).
\bibitem[LZ24]{LZ24} X. Lin and S. Zhang, Serre algebra, matrix factorization and categorical Torelli theorem for hypersurfaces, \emph{Math. Ann.} \textbf{391} (2025), no. 1, 163--177; published online in 2024.
\bibitem[LZ26a]{LZ26a} X. Lin and S. Zhang, Reconstruction of smooth Fano varieties which are anticanonically quadratically defined in projective space via orthogonal complement of the structure sheaf in arbitrary dimension, in preparation.
\bibitem[LZ26b]{LZ26b} X. Lin and S. Zhang, Categorical Torelli theorem for complete intersection of quadrics and conic bundles, in preparation.
\bibitem[LZ26c]{LZ26c} X. Lin and S. Zhang, Categorical Torelli theorem for $(2,3)$-complete intersection in $\PP^5$, and beyond, in preparation.
\bibitem[LZ26d]{LZ26d} X. Lin and S. Zhang, Categorical Griffiths residue, Cayley trick for complete intersection of hypersurfaces, in preparation.
\bibitem[MY82]{MY82} J. Mather and S.-T. Yau, Classification of isolated hypersurface singularities by their moduli algebras, \emph{Invent. Math.} \textbf{69} (1982), 243--251.
\bibitem[Orl09]{Orl09} D. Orlov, Derived categories of coherent sheaves and triangulated categories of singularities, in \emph{Algebra, Arithmetic, and Geometry: In Honor of Yu. I. Manin}, Vol. II, Progr. Math. 270, Birkh\"auser, 2009, 503--531.
\bibitem[Par91]{Par91} R. Pardini, Abelian covers of algebraic varieties, \emph{J. Reine Angew. Math.} \textbf{417} (1991), 191--213.
\bibitem[Shi18]{Shi18} E. Shinder, Group actions on categories and Elagin's theorem revisited, \emph{Eur. J. Math.} \textbf{4} (2018), 413--422.
\bibitem[Toe07]{Toe07} B. To\"en, The homotopy theory of dg-categories and derived Morita theory, \emph{Invent. Math.} \textbf{167} (2007), no. 3, 615--667.
\bibitem[RVdB19]{RVdB19} A. Rizzardo and M. Van den Bergh, A note on non-unique enhancements, \emph{Proc. Amer. Math. Soc.} \textbf{147} (2019), no. 2, 451--453.
\bibitem[BLS16]{BLS16} D. Bergh, V. A. Lunts, and O. M. Schn\"urer, Geometricity for derived categories of algebraic stacks, \emph{Selecta Math. (N.S.)} \textbf{22} (2016), no. 4, 2535--2568. \href{https://doi.org/10.1007/s00029-016-0280-8}{doi:10.1007/s00029-016-0280-8}.
\bibitem[DI09]{DI09} I. V. Dolgachev and V. A. Iskovskikh, Finite subgroups of the plane Cremona group, in \emph{Algebra, Arithmetic, and Geometry: In Honor of Yu. I. Manin}, Vol. I, Progr. Math. 269, Birkh\"auser Boston, 2009, 443--548. \href{https://doi.org/10.1007/978-0-8176-4745-2_11}{doi:10.1007/978-0-8176-4745-2\_11}.
\bibitem[KP18]{KP18} A. Kuznetsov and A. Perry, Derived categories of Gushel--Mukai varieties, \emph{Compos. Math.} \textbf{154} (2018), no. 7, 1362--1406. \href{https://doi.org/10.1112/S0010437X18007091}{doi:10.1112/S0010437X18007091}.
\bibitem[BP23]{BP23} A. Bayer and A. Perry, Kuznetsov's Fano threefold conjecture via K3 categories and enhanced group actions, \emph{J. Reine Angew. Math.} \textbf{800} (2023), 107--153. \href{https://doi.org/10.1515/crelle-2023-0021}{doi:10.1515/crelle-2023-0021}.
\bibitem[JLLZ24]{JLLZ24} A. Jacovskis, X. Lin, Z. Liu, and S. Zhang, Categorical Torelli theorems for Gushel--Mukai threefolds, \emph{J. Lond. Math. Soc. (2)} \textbf{109} (2024), no. 3, e12878. \href{https://doi.org/10.1112/jlms.12878}{doi:10.1112/jlms.12878}.
\end{thebibliography}
\end{document}